\documentclass[12pt,reqno]{amsart}
\usepackage{}
\usepackage{amsmath}
\usepackage{mathrsfs}
\usepackage{amssymb}
\usepackage{amsthm}
\usepackage{mathrsfs}
\usepackage[centertags]{amsmath}
\usepackage{amsfonts}
\usepackage[numbers,sort&compress]{natbib}
\usepackage{color}
\usepackage{extarrows}
\usepackage{fullpage}
\usepackage{CJK}
\usepackage[colorlinks=true,  linkcolor=blue, citecolor=blue]{hyperref}
\input amssym.def
\input amssym.tex

\date{}

\newtheorem{Theorem}{Theorem}[section]

\newtheorem{Proposition}{Proposition}[section]

\newtheorem{Lemma}{Lemma}[section]

\newcommand\R{\mbox{\bf R}}
\newcommand\T{\mbox{\bf T}}

\newcommand\SR{\mbox{\scriptsize\bf R}}

\newcommand{\definition}{{\lower .5ex
  \hbox{$\>\>\stackrel{\triangle}{=}\>\>$} }}
\newcommand\supp{\mathop{\rm supp}}



\allowdisplaybreaks
\begin{document}

\baselineskip=22pt
\thispagestyle{empty}

\begin{center}
{\Large \bf  The Cauchy problem for the generalized KdV equation  in the Sobolev space $H^{s}(\R)$}\\[1ex]

{ Xiangqian Yan\footnote{Email:yanxiangqian213@126.com}$^{a}$,\, Yongsheng Li\footnote{Email:  yshli@scut.edu.cn}$^{a}$,\,Juan Huang\footnote{Email:  hjmath@163.com}$^{b}$,\,Jianhua Huang\footnote{Email:  jhhuang32@nudt.edu.cn}$^{c}$,\,Wei Yan\footnote{Email:  011133@htu.edu.cn}$^{d}$}\\[1ex]

{$^a$School of Mathematics,
 South China University of Technology,}\\
 {Guangzhou, Guangdong 510640, P. R. China}\\[1ex]

{$^b$School of Mathematical Sciences, Sichuan Normal University,}\\
{Chengdu, Sichuan 610066, P. R. China}\\[1ex]

{$^c$College of Sciences, National University of Defense Technology,}\\
{Changsha, Hunan 410073, P. R.  China}\\[1ex]

{$^d$School of Mathematics and Statistics, Henan
Normal University,}\\
{Xinxiang, Henan 453007, P. R.  China}\\[1ex]

\end{center}
\noindent{\bf Abstract.}
In this paper, we are concerned with the Cauchy problem for the generalized KdV equation  with random data and rough data. Firstly,
when $s\in\R$, by using the initial value randomization technique  introduced by Shen et al. (arXiv:2111.11935) and the construction of appropriate auxiliary spaces, we establish the almost sure local well-posedness of the generalized KdV equation in $H^{s}(\R)$,  which improves Theorem 1.3 of Hwang and Kwak (Proc. Amer. Math. Soc. 146(2018), 267-280.) and Theorem 1.5 of Yan et al.(arXiv:2011.07128.). Secondly, by using the well-posedness results proved in Theorem 1.1,  for $f\in H^{s}(\R),\, s\in\R$,  we obtain
\begin{eqnarray*}
&&\mathbb{P}\left(\left\{\omega:\lim_{t\rightarrow0}\|u(t,x)-U(t)f^{\omega}(x)\|_{L_{x}^{\infty}}=0\right\}\right)=1,
\end{eqnarray*}
which improves Theorem 1.6 of Yan et al.(arXiv:2011.07128.).
Thirdly, by using the dyadic decomposition and constructing appropriate function spaces, we establish nonlinear smoothing for the generalized KdV equation with rough data. Furthermore, by using this estimate, when data $f\in H^{s}(\R)\cap\hat{L}^{\infty}(\R)$,\, $s>\frac{1}{2}-\frac{2}{k+1},\, k\geq4$, we obtain
\begin{eqnarray*}
&&\lim_{|x|\rightarrow \infty}u(t,x)=0,\quad t\in[0, T].
\end{eqnarray*}
In particular, for $f(x)\in H^{s}(\R)$,\,$s>\frac{1}{2}-\frac{2}{k+1}$,\, $k\geq4$, we prove
\begin{eqnarray*}
&&\lim_{|x|\rightarrow \infty}(u(t,x)-U(t)f(x))=0.
\end{eqnarray*}
Finally, by using Theorem 1.1, when $f\in H^{s}(\R),\, s\in\R$, we obtain
\begin{eqnarray*}
&&\mathbb{P}\left(\left\{\omega: \forall t\in I_{\omega}, \lim_{|x|\rightarrow \infty}\left(u(t,x)-U(t)f^{\omega}(x)\right)=0\right\}\right)=1.
\end{eqnarray*}

 \bigskip

\noindent {\bf Keywords}: Generalized KdV equation; Probabilistic local well-posedness; Probabilistic nonlinear pointwise convergence; Nonlinear smoothing
\medskip

\medskip
\noindent {\bf Corresponding Author:} Juan Huang

\medskip
\noindent {\bf Email Address:} hjmath@163.com

\medskip
\noindent {\bf MSC2020-Mathematics Subject Classification}: 35Q53; 35R60; 42B37

\leftskip 0 true cm \rightskip 0 true cm

\newpage

\baselineskip=20pt

\bigskip
\bigskip
\tableofcontents

\section{Introduction and the main results}
\bigskip

\setcounter{Theorem}{0} \setcounter{Lemma}{0}\setcounter{Definition}{0}\setcounter{Proposition}{1}

\setcounter{section}{1}

\subsection{Research background and current status}
In this paper, we consider the Cauchy problem for the generalized KdV equation
\begin{eqnarray}
&&u_{t}+u_{xxx}+\frac{1}{k+1}\partial_{x}(u^{k+1})=0.\quad k\in \mathbf{N}^{+},\label{1.01}
\end{eqnarray}
in the Sobolev space $H^{s}(\R)(s\in\mathbf{R})$ with random data $u(0,x)=f^{\omega}(x)$, where $f^{\omega}(x)$ denotes the randomization of $f(x)$ as defined in \eqref{1.03}.

The initial data randomization technique plays a crucial role  in studying the probabilistic local and global well-posedness of the evolution equations in the supercritical regime and   constructing invariant measures \cite{B1994CMP,B1994DUKE,B1996, BB2014JEMS,BB2014JFA, BOP,BOP2015,BT2007,BT2008I,BT2008II,BT2014,CO2012,DLM2019,D2015,DC2011,FS2015,HO2015,HO2016,HK2018, LRS1988,LM2014, L2024,NPS2013,NOBS2012,O2009CMP,O2009SIAM,O2009DIE,OQ2013,OP2016,ORT2016,P2017,PRT2014,R2016,S2012,ST2014,SSW2021,T2006,T2009,TT2010,TV2013,TV2014,
YYDH2020, ZF2011,ZF2012}. \eqref{1.01} has been studied by some people. Oh et al. \cite{ORT2016} and Richards \cite{R2016}  studied the invariance of the Gibbs measure for the generalized KdV equation and  the periodic quartic gKdV equation, respectively. For $s>\max\left\{\frac{1}{k+1}\left(\frac{1}{2}-\frac{2}{k}\right),\,\frac{1}{4}-\frac{2}{k}\right\}$, $k\geq5$,  Hwang and Kwak \cite{HK2018}  established the probabilistic local well-posedness of \eqref{1.01} in $H^{s}(\R)$. Recently, Yan et al. \cite{YYDH2020} improved this result to the range $s>\max\left\{\frac{1}{k+1}\left(\frac{1}{2}-\frac{2}{k}\right),\,\frac{1}{6}-\frac{2}{k}\right\}$, $k\geq5$. Compaan et al. \cite{CLS2021} studied the pointwise convergence problem  of the nonlinear Schr\"{o}dinger equation with random data and rough data. Yan et al. \cite{YYDH2020} studied the pointwise convergence problem of the generalized KdV equation with rough data and random data. Very recently, by using frequency-restricted estimates, Correia et al. \cite{COS2024} and Correia and Leite \cite{CL2025} obtained the nonlinear smoothing effects for the quartic KdV equation and the generalized KdV equation \eqref{1.01} with $k\geq4$, respectively.

\subsection{Motivations and contributions}

On the one hand,  for $s\in \R$, when data belongs to $H^{s}(\R^{d})(d=3,4)$,  Shen et al. \cite{SSW2021} investigated the almost sure scattering problem for the defocusing energy-critical nonlinear Schr\"{o}dinger equations.  A natural question arises: by applying the initial data randomization procedure in \cite{SSW2021}, does the probabilistic well-posedness of the generalized KdV equation hold for rough data in $H^{s}(\R),\,s\in\R$? On the other hand, by using frequency-restricted estimates, Correia and Leite  \cite{CL2025} obtained nonlinear smoothing effects for the generalized KdV equation with rough data. A natural question arises: can we find a method different from that in \cite{CL2025} to obtain the same results?

We provide positive answers to both questions raised above. In this paper, we consider the Cauchy problem for the generalized KdV equation  with rough data  and random data. Firstly,
for initial data in $H^{s}(\R)$ with $s\in\R$, we establish the almost sure local well-posedness of \eqref{1.01}. The proof is based on the technique of initial data randomization \cite{SSW2021} and the construction of appropriate auxiliary spaces. This result improves \cite[Theorem 1.3]{HK2018} and \cite[Theorem 1.5]{YYDH2020}.
Secondly, we establish the almost sure convergence of the inhomogeneous part of the  solution to the generalized KdV equation in the integral form with random data, more precisely, for $f\in H^{s}(\R),\, s\in\R,$ we obtain
\begin{eqnarray*}
&&\mathbb{P}\left(\left\{\omega:\lim_{t\rightarrow0}\|u(t,x)-U(t)f^{\omega}(x)\|_{L_{x}^{\infty}}=0\right\}\right)=1,
\end{eqnarray*}
which improves \cite[Theorem 1.6]{YYDH2020}.
 Thirdly, by using a method similar to those in \cite{RV2012,YWY2025}, through dyadic decomposition and constructing appropriate function spaces, we establish nonlinear smoothing for the generalized KdV equation with rough data, more precisely, we obtain
\begin{align*}
&\quad\left\|\int_{0}^{t}U(t-\tau)\partial_{x}\left(\prod_{j=1}^{k+1}u_{j}\right)d\tau\right\|_{X_{T}^{\mu+s+2\epsilon}}\nonumber\\
&\leq
C(T)\left(\sum_{j\geq0}2^{2(\mu+s)j}\left\|\Delta_{j}
\left(\prod_{j=1}^{k+1}u_{j}\right)\right\|_{L_{x}^{1}L_{T}^{\frac{2}{1+\epsilon_{1}}}}^{2}\right)^{\frac{1}{2}}\nonumber\\
&\leq C(T)\prod_{j=1}^{k+1}\|u_{j}\|_{X_{T}^{s}},
\end{align*}
where $C(T)=C\max\left\{T^{\frac{4\epsilon^{2}}{3+2\epsilon}},\, T^{\frac{8\epsilon^{2}}{3+2\epsilon}},\, T^{\frac{4\epsilon^{2}}{3+2\epsilon}+\frac{\epsilon_1}{k}} \right\}$, $k\geq4$ and $s\geq\frac{\mu-2}{k}+\frac{1}{2}+\frac{6\epsilon}{k}+\epsilon_{2}$,\, $\epsilon,\, \epsilon_{2}>0$,\, $\epsilon_{1}=\frac{(1-2\epsilon)4\epsilon}{3+2\epsilon}$,\, $0\leq \mu\leq \min\{1-6\epsilon,\,1-6\epsilon+k(s-(\frac{1}{2}-\frac{1}{k})-\epsilon_{2})\}$.
Fourthly, by applying Theorem 1.3, when rough data $f\in H^{s}(\R)\cap\hat{L}^{\infty}(\R)$,\, $s>\frac{1}{2}-\frac{2}{k+1},\, k\geq4$, we have
\begin{eqnarray*}
&&\lim_{|x|\rightarrow \infty}u(t,x)=0,\quad t\in[0, T].
\end{eqnarray*}
In particular, for $f(x)\in H^{s}(\R)$,\,$s>\frac{1}{2}-\frac{2}{k+1}$,\, $k\geq4$, we have
\begin{eqnarray*}
&&\lim_{|x|\rightarrow \infty}(u(t,x)-U(t)f(x))=0.
\end{eqnarray*}
Finally, we establish the almost sure spatial decay of the inhomogeneous part of the solution in the integral form. More precisely, for $f\in H^{s}(\R),\, s\in\R$, we obtain
\begin{eqnarray*}
&&\mathbb{P}\left(\left\{\omega: \forall t\in I_{\omega}, \lim_{|x|\rightarrow \infty}\left(u(t,x)-U(t)f^{\omega}(x)\right)=0\right\}\right)=1.
\end{eqnarray*}

\subsection{Introduction to notations and spaces}
Before presenting the main results, we introduce the following notations, which will be used in the proving process.  $\rm{mes}(E)$ denotes the Lebesgue measure of $E$, and
$a\sim b$ means that there exists $C_{1}, C_{2}>0$ satisfying $C_{1}|a|\leq|b|\leq C_{2}|a|$. Let $\phi(\xi)$ be a function in $C_{c}^{\infty}(\R)$ with $\supp \phi\subset(\frac{1}{2},2]$. Let $a_{1}>0$, $d\geq1$, $N\in 2^{\mathbf{N}}$. We define $ \langle x\rangle=(1+|x|^{2})^{\frac{1}{2}}$,
\begin{eqnarray*}
&&\hat{f}(\xi)=\frac{1}{(2\pi)^{\frac{d}{2}}}\int_{\SR^{d}}e^{-ix\cdot\xi}f(x)dx,\quad \check{f}(x)=\frac{1}{(2\pi)^{\frac{d}{2}}}\int_{\SR^{d}}e^{ix\cdot\xi}\hat{f}(\xi)d\xi,\\
&&\mathscr{F}_{x}f(\xi)=\frac{1}{\sqrt{2\pi}}\int_{\SR}e^{-ix\xi}f(x)dx,\quad \mathscr{F}_{\xi}^{-1}f(x)=\frac{1}{\sqrt{2\pi}}\int_{\SR}e^{ix\xi}\mathscr{F}_{x}f(\xi)d\xi,\\
&&P_{\leq a_{1}}f=\frac{1}{\sqrt{2\pi}}\int_{|\xi|\leq a_{1}}e^{ix\xi}\mathscr{F}_{x}f(\xi)d\xi,\quad P_{\geq a_{1}}f=\frac{1}{\sqrt{2\pi}}\int_{|\xi|\geq a_{1}}e^{ix\xi}\mathscr{F}_{x}f(\xi)d\xi,\\
&&P_{N}f=\frac{1}{\sqrt{2\pi}}\int_{\SR}e^{ix\xi}\phi\left(\frac{\xi}{N}\right)\mathscr{F}_{x}f(\xi)d\xi,
\quad\|f^{\omega}\|_{L_{\omega}^{p}L_{x}^{q}L_{t}^{r}}=\left(\int_{\Omega}\left(\|f^{\omega}\|_{L_{x}^{q}L_{t}^{r}}\right)^{p}d\mathbb{P}(\omega)\right)^{\frac{1}{p}},\\
&&\|f\|_{L_{x}^{p}L_{t}^{q}(\SR\times I)}=\left(\int_{\SR}\left(\int_{I}|f|^{q}dt\right)^{\frac{p}{q}}dx\right)^{\frac{1}{p}},\quad U(t)f=\frac{1}{\sqrt{2\pi}}\int_{\SR}e^{ix\xi}e^{it\xi^{3}}\mathscr{F}_{x}f(\xi)d\xi.
\end{eqnarray*}
The space $X^{s,b}$ is defined as follows
\begin{eqnarray*}
&&X^{s,b}=\{u\in\mathcal{S}^{\prime}(\R^{2}):\|u\|_{X^{s,b}}=\|\langle\tau-\xi^{3}\rangle^{b}\langle\xi\rangle^{s}\mathscr{F}u(\tau,\xi)\|_{L_{\tau\xi}^{2}}<\infty\}.
\end{eqnarray*}

\subsection{Initial value randomization}

Inspired by \cite{SSW2021}, we consider the following randomization process. Suppose that $N\in 2^{\mathbf{N}}$, we define
\begin{eqnarray*}
&&O_{N}=\{\xi\in\R:|\xi|\leq N\},\quad Q_{N}=O_{2N}\setminus O_{N}.
\end{eqnarray*}
and for $a\in\mathbf{N}$, we define
\begin{eqnarray*}
&&\mathcal{A}(Q_{N})=\{Q: Q\,\, \text{is a dyadic interval with length}\,\, N^{-a},\,Q\subset Q_{N}\},
\end{eqnarray*}
where $\sharp \mathcal{A}(Q_{N})=N^{a+1}$ and $\bigcup_{Q\in\mathcal{A}(Q_{N})}Q=Q_{N}$.
We denote
\begin{eqnarray*}
&&\mathcal{Q}=O_{1}\cup\{Q: Q\in\mathcal{A}(Q_{N}),\,N\in 2^{\mathbf{N}}\}.
\end{eqnarray*}
By rearranging the elements in $\mathcal{Q}$, we can obtain
\begin{eqnarray*}
&&\mathcal{Q}=\{Q_{k}:k\in\mathbf{N}\},\quad \R=\bigcup_{Q\in\mathcal{Q}}Q=\bigcup_{k\in\mathbf{N}}Q_{k}.
\end{eqnarray*}
We choose $\psi_{k}(\xi)\in C_{0}^{\infty}(\R)$ to be a real-valued function, satisfying $\psi_{k}(\xi)=1$,\, $\xi\in Q_{k}$ and $\psi_{k}(\xi)=0$,\, $\xi\notin2Q_{k}$, and for all $\xi\in \R$,
\begin{eqnarray*}
&&\sum_{k\in\mathbf{N}}\psi_{k}(\xi)=1,
\end{eqnarray*}
and $\psi_{k}(D)$ is the pseudodifferential operator defined by
\begin{eqnarray*}
&&(\psi_{k}(D)f)(x)=\mathscr{F}_{\xi}^{-1}\big(\psi_{k}(\xi)\mathscr{F}_{x}f\big)(x),x\in \R.
\end{eqnarray*}
Let $\{g_{k}(\omega)\}_{k\in \mathbf{N}}$ be a sequence of independent real-valued random variables with zero mean on the probability space $(\Omega,\mathcal{A}, \mathbb{P})$. Their probability distributions are denoted by $\mu_{k}(k\in\mathbf{N})$, and this family satisfies the following properties:
for $\forall \gamma_{k}\in \R,\, \forall k\in\mathbf{N}$,\, $\exists c>0$
\begin{eqnarray}
&&\Big|\int_{-\infty}^{+\infty}e^{\gamma_{k} x}d\mu_{k}(x)\Big|\leq e^{c\gamma_{k}^2}.\label{1.02}
\end{eqnarray}
Then, we define the randomization of $f$ as follows
\begin{eqnarray}
&&f^{\omega}(x)=\sum_{k\in\mathbf{N}}g_{k}(\omega)\psi_{k}(D)f.\label{1.03}
\end{eqnarray}

\subsection{The main results }

\begin{Theorem}\label{Theorem1}(Almost sure local well-posedness.)
 Suppose that $s\in\R$,\,  $k\geq5$,\, $0<s_{1}<\frac{1}{9}$,\, $\sigma>\frac{1}{2}-\frac{2}{k}+\frac{4s_{1}}{k}$. Let  $f(x)\in H^{s}(\R)$.\, Its randomization $f^{\omega}(x)$ is defined as in \eqref{1.03}. Take $a\in \mathbf{N}$ satisfying
\begin{eqnarray*}
&a>\max\left\{\frac{2(k+1)(\sigma+2s_{1}-s)-2s_{1}}{k-4+4s_{1}},\, -\frac{2ks}{k-4-4s_{1}},\,\frac{\sigma-s}{4s_{1}}+\frac{1}{4},\,
-\frac{2(ks-2s_{1})}{k-4+4s_{1}} \right\}.
\end{eqnarray*}
Then, we have that \eqref{1.01} is almost sure local well-posedness with the random data $f^{\omega}(x)$.  More precisely, for almost every $\omega\in\Omega$, there exists a time interval $I_{\omega}$ containing $0$, \eqref{1.01} admits a unique solution $u$ with the random data $f^{\omega}(x)$, satisfying
\begin{eqnarray*}
&&u-U(t)f^{\omega}\in C(I_{\omega}, H^{\sigma}(\R)).
\end{eqnarray*}

\end{Theorem}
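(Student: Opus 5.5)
We will use the standard Da Prato--Debussche/Bourgain decomposition $u=U(t)f^{\omega}+v$. The remainder $v$ solves
\begin{eqnarray*}
&&v(t)=-\frac{1}{k+1}\int_{0}^{t}U(t-\tau)\partial_{x}\big((U(\tau)f^{\omega}+v)^{k+1}\big)d\tau .
\end{eqnarray*}
We close a contraction argument for $v$ in a space $X^{\sigma}_{T}$ built from local smoothing, maximal function and Strichartz norms. Concretely, $X^{\sigma}_{T}$ is the intersection of $C([0,T];H^{\sigma})$, $\|D_x^{\sigma+1}v\|_{L^\infty_xL^2_T}$, $\|D_x^{\sigma-\frac14}v\|_{L^4_xL^\infty_T}$ and an $L^{p}_xL^{q}_T$ norm adapted to the $k$-th power, as in the Kenig--Ponce--Vega theory for gKdV. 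This works because $\sigma>\frac12-\frac2k$ lies above the critical index $s_c=\frac12-\frac2k$.

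For the random linear part we will use auxiliary norms of the form
\begin{eqnarray*}
\|D_x^{\alpha}P_N U(t)f^{\omega}\|_{L^{p}_xL^{q}_T},
\end{eqnarray*}
with the gain measured as a power of $N$. The key probabilistic step is a large deviation estimate. Using \eqref{1.02}, the Khintchine-type bound gives, for $r\geq\max(p,q)$,
\begin{eqnarray*}
\|P_N U(t)f^{\omega}\|_{L^r_\omega L^p_xL^q_T}\lesssim \sqrt r\,\Big\|\Big(\sum_k|U(t)\psi_k(D)P_Nf|^2\Big)^{1/2}\Big\|_{L^p_xL^q_T}.
\end{eqnarray*}
We then exploit that each $\psi_k$ is supported on an interval of length $N^{-a}$ inside $Q_N$. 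By Bernstein on such short intervals, combined with the unit-scale smoothing/maximal estimates, each $U(t)\psi_k(D)f$ gains a factor roughly $N^{-a/2}$ in $L^\infty_x$-type norms. Interpolating this gain against the usual losses yields bounds of the form $N^{-\theta(a)}\|P_Nf\|_{L^2}$, where $\theta$ grows linearly in $a$. Summing over dyadic $N$ then gives finiteness of the auxiliary norms with high probability, even for negative $s$. Choosing $a$ large, which is exactly the role of the four lower bounds in the hypothesis, compensates for the gap $\sigma-s$ and the derivative loss.

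We then prove the multilinear estimate
\begin{eqnarray*}
\Big\|\int_0^tU(t-\tau)\partial_x\Big(\prod_{j=1}^{k+1}w_j\Big)d\tau\Big\|_{X^\sigma_T}\lesssim T^{\delta}\prod_{j}\|w_j\|_{Y_j},
\end{eqnarray*}
where each $w_j$ is either $v$ (measured in $X^\sigma_T$) or the random linear part (measured in the auxiliary norms). The derivative is absorbed by the dual local smoothing estimate $\|\partial_x\int U(t-\tau)F\|_{L^\infty_TL^2_x}\lesssim\|F\|_{L^1_xL^2_T}$, after which Hölder distributes the $k+1$ factors. We split each factor dyadically and place the highest-frequency factor in the smoothing norm ($L^\infty_xL^2_T$ with $\sigma+1$ derivatives) and the others in maximal norms $L^{k}_xL^\infty_T$ or $L^4_xL^\infty_T$. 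The case analysis runs over how many factors are random. When all factors are deterministic this is the classical $\sigma>s_c$ estimate. When some factors are random, the extra $N^{-\theta(a)}$ decay pays for the maximal-function loss, and the lower bounds on $a$ in terms of $s_1$ make each dyadic sum converge. We use $s_1\in(0,\frac19)$ as a small parameter: the $4s_1/k$ in $\sigma$ and the requirement $s_1<\frac19$ come from interpolating the maximal estimate with the smoothing estimate.

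This multilinear bound gives a contraction on a ball of $X^\sigma_T$ for $T=T(\omega)$ small, since the auxiliary norms are finite almost surely. Uniqueness and continuity in $H^\sigma$ follow. The main obstacle we expect is the randomized estimate for the maximal norm $L^p_xL^\infty_T$. Randomization cannot improve $L^\infty_T$ directly, so we plan to first pass through a Sobolev embedding in $t$ (costing $s_1$ derivatives in time, hence $3s_1$ in space) and then use the $N^{-a}$ localization. Tracking these exponents is what produces the specific thresholds on $a$.
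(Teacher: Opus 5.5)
Your skeleton is the paper's. You split $u=z+v$ with $z=U(t)f^{\omega}$. The randomized gain comes from the $L^{\infty}_{x,t}$ bound $N^{-a/2}\|\psi_{k}(D)f\|_{L^{2}}$ on Fourier intervals of length $N^{-a}$, interpolated with the maximal estimate (Lemma 2.3) and then with local smoothing (Proposition 2.1). Lemmas 2.4--2.5 give the large deviation bound, a dyadic case analysis over how many factors are random gives the $(k+1)$-linear estimate, and a contraction closes.

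The functional framework, however, is genuinely different. The paper puts $v$ in $X^{\sigma,\frac12+\delta}$ and proves Proposition 3.1 by duality. The test function $h$ is only in $X^{0,-b_{1}}$ with $-b_{1}<\frac12$, so the full-derivative smoothing bound for $h$ is unavailable. The paper therefore uses the interpolated bound $\|D^{1-2s_{1}}h\|_{L^{1/s_{1}}_{x}L^{2}_{t}}\lesssim\|h\|_{X^{0,-b_{1}}}$ from \eqref{2.042}, which reduces everything to $\|D^{2s_{1}}\langle D\rangle^{\sigma}(v+z)^{k+1}\|_{L^{1/(1-s_{1})}_{x}L^{2}_{t}}$. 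Random factors are measured only in the eight mixed norms of $Y$, all with finite time exponent. The $L^{p}_{x}L^{\infty}_{t}$ bound of Lemma 2.3 is used only for deterministic pieces $U(t)\psi_{k}(D)f$, as an interpolation endpoint. So the Sobolev-in-time step you single out as the main obstacle never arises. Existence is closed by shrinking $I_{\omega}$ until $\|z\|_{Y(I_{\omega})}\le\vartheta$, not by a power of $T$ in the multilinear bound.

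Your KPV route has the endpoint dual and double smoothing estimates from $L^{1}_{x}L^{2}_{T}$, so it avoids the $s_{1}$-losses in the $v$-only terms. The price is summing dyadic pieces in $L^{1}_{x}$, the time-Sobolev argument, and extracting smallness by hand.

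As written, though, the outline does not yet prove the statement with its stated hypotheses, for two reasons.

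\emph{Placement of a random top-frequency factor.} Your rule (top frequency in $L^{\infty}_{x}L^{2}_{T}$, the rest in maximal norms, random gain paying the maximal-function loss) works only when the top factor is $v$. If the top factor is $z$, one derivative from smoothing is not enough once $\sigma-s>1$, and Khintchine cannot act in $L^{\infty}_{x}$. The top $z$ must be measured in a finite-exponent norm carrying the $a$-gain. The paper uses $L^{\frac{k+1}{1-s_{1}}}_{x}L^{2(k+1)}_{t}$, or $L^{\frac{5}{1-9s_{1}}}_{x}L^{\frac{10}{1-4s_{1}}}_{t}$ with weight $D^{(4a+1)s_{1}}$. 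Your H\"older bookkeeping has to be rebuilt around that.

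\emph{The hypotheses on $a$, $s_{1}$ and $\sigma$.} The four lower bounds on $a$, the restriction $s_{1}<\frac19$ and the shift $\frac{4s_{1}}{k}$ in $\sigma$ are not produced by a Sobolev embedding in time. They are artifacts of the paper's specific exponents:
\begin{itemize}
\item the first bound comes from putting all $z$'s in $L^{\frac{k+1}{1-s_{1}}}_{x}L^{2(k+1)}_{t}$;
\item the second comes from $L^{\frac{5k}{4(1+s_{1})}}_{x}L^{\frac{5k}{2(1+s_{1})}}_{t}$;
\item the third comes from $L^{\frac{5}{1-9s_{1}}}_{x}L^{\frac{10}{1-4s_{1}}}_{t}$, whose gain is $(4a+1)s_{1}$; this is also where $s_{1}<\frac19$ enters;
\item the fourth comes from $L^{\frac{5k}{4-2s_{1}}}_{x}L^{\frac{5k}{2-6s_{1}}}_{t}$;
\item the shift $\frac{4s_{1}}{k}$ comes from the $s_{1}$-perturbation of exponents forced by the duality (Case 1B). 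Your own all-$v$ case needs only $\sigma>\frac12-\frac2k$.
\end{itemize}
Your exponents will produce different conditions on $a$. You must either check that every $a$ above the stated maximum suffices in your scheme, or accept that you are proving a variant of the theorem with your own threshold.
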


\noindent{\bf Remark 1.} In \cite[Theorem 1.3]{HK2018}, they require $s>\max\left\{\frac{1}{k+1}\left(\frac{1}{2}-\frac{2}{k}\right),\,\frac{1}{4}-\frac{2}{k}\right\}$,\,
$k\geq 5$ and in \cite[Theorem 1.5]{YYDH2020}, they require $s>\max\left\{\frac{1}{k+1}\left(\frac{1}{2}-\frac{2}{k}\right),\,\frac{1}{6}-\frac{2}{k}\right\}$,\,
$k\geq 5$.
In our results, we remove the requirements on $s$.

\begin{Theorem}\label{Theorem2}(Almost sure convergence of the inhomogeneous part of the solution in the integral form.)
Suppose that $s\in\R$,\,  $k\geq5$,\, $0<s_{1}<\frac{1}{9}$,\, $\epsilon>0$,\, $\sigma=\frac{1}{2}+\epsilon$. Let  $f(x)\in H^{s}(\R)$.\,  $f^{\omega}(x)$ is defined as in \eqref{1.03}. Take $a\in \mathbf{N}$ satisfying
\begin{eqnarray*}
&a>\max\left\{\frac{2(k+1)(\sigma+2s_{1}-s)-2s_{1}}{k-4+4s_{1}},\, -\frac{2ks}{k-4-4s_{1}},\,\frac{\sigma-s}{4s_{1}}+\frac{1}{4},\,
-\frac{2(ks-2s_{1})}{k-4+4s_{1}} \right\}.
\end{eqnarray*}
and  let $u(t,x)$ be the solution to  \eqref{1.01}. Then we have
\begin{eqnarray}
&&\mathbb{P}\left(\left\{\omega:\lim_{t\rightarrow0}\|u(t,x)-U(t)f^{\omega}(x)\|_{L_{x}^{\infty}}=0\right\}\right)=1. \label{1.04}
\end{eqnarray}

\end{Theorem}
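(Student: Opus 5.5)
We plan to deduce Theorem \ref{Theorem2} from the well-posedness result of Theorem \ref{Theorem1}, applied with the specific choice $\sigma=\frac{1}{2}+\epsilon$. The admissibility conditions match: $\frac12+\epsilon>\frac{1}{2}-\frac{2}{k}+\frac{4s_{1}}{k}$, because $s_{1}<\frac19<\frac12$. The hypotheses on $a$ in Theorem \ref{Theorem2} are exactly those of Theorem \ref{Theorem1} with this $\sigma$. Hence for almost every $\omega\in\Omega$ there is an interval $I_{\omega}\ni 0$ and a unique solution $u$ with data $f^{\omega}$. Writing $v=u-U(t)f^{\omega}$, Theorem \ref{Theorem1} gives
\begin{eqnarray*}
&&v\in C(I_{\omega}, H^{\frac12+\epsilon}(\R)).
\end{eqnarray*}
We also need $v(0)=0$. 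This follows from the Duhamel formulation used in the proof of Theorem \ref{Theorem1}:
\begin{eqnarray*}
&&v(t)=-\frac{1}{k+1}\int_{0}^{t}U(t-\tau)\partial_{x}(u^{k+1})(\tau)d\tau,
\end{eqnarray*}
where $v$ is the fixed point in the auxiliary space, embedded in $C(I_\omega,H^\sigma)$.

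The key step is the one-dimensional Sobolev embedding $H^{\frac12+\epsilon}(\R)\hookrightarrow L^{\infty}(\R)$. Via Fourier inversion and Cauchy--Schwarz it reads
\begin{eqnarray*}
&&\|g\|_{L_{x}^{\infty}}\leq C\|\langle\xi\rangle^{-\frac12-\epsilon}\|_{L_{\xi}^{2}}\|g\|_{H^{\frac12+\epsilon}}\leq C_{\epsilon}\|g\|_{H^{\frac12+\epsilon}}.
\end{eqnarray*}
Applying this with $g=v(t)-v(0)=v(t)$ gives
\begin{eqnarray*}
&&\|u(t)-U(t)f^{\omega}\|_{L_{x}^{\infty}}\leq C_{\epsilon}\|v(t)\|_{H^{\frac12+\epsilon}}\to 0\quad (t\to0).
\end{eqnarray*}
The limit holds by continuity of $v$ in $H^{\sigma}$ at $t=0$ together with $v(0)=0$.

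Thus the event in \eqref{1.04} contains the full-measure set $\Omega'$ of those $\omega$ for which Theorem \ref{Theorem1} yields a solution. Consequently its probability is $1$. Measurability is not an issue: we can write the event as a countable intersection over rational $t\to0$, or simply note that it contains a measurable set of probability one, assuming completeness of $\mathbb{P}$.

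The main obstacle is to confirm from the proof of Theorem \ref{Theorem1} that the probabilistic construction really gives continuity of $v$ into $H^{\sigma}$ including at $t=0$, with $v(0)=0$. One also needs that the exceptional set does not depend on $t$. For this, we recall that the almost sure statement is obtained as follows. Large-deviation estimates for $U(t)f^{\omega}$ in the auxiliary norms, derived from \eqref{1.02}, give sets $\Omega_{\lambda}$ with $\mathbb{P}(\Omega\setminus\Omega_{\lambda})\leq Ce^{-c\lambda^{2}}$. On each $\Omega_{\lambda}$ a contraction argument holds on an interval of length depending on $\lambda$. Taking $\bigcup_{\lambda}\Omega_{\lambda}$ then gives full measure. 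Continuity in time of the Duhamel term follows from the standard $X^{\sigma,b}$ embedding into $C_{t}H^{\sigma}$ for $b>\frac12$, or its analogue in the auxiliary spaces.
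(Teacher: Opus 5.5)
Your proposal is correct and follows the paper's route: apply Theorem \ref{Theorem1} with $\sigma=\frac12+\epsilon$, use $X^{\frac12+\epsilon,\frac12+\delta}\hookrightarrow C(I_\omega;H^{\frac12+\epsilon}(\R))$, and then use the Sobolev embedding $H^{\frac12+\epsilon}(\R)\hookrightarrow L^\infty(\R)$. You are in fact more careful than the written proof, because the limit needs continuity of $v$ at $t=0$ together with $v(0)=0$ (from the zero datum in \eqref{4.010}), whereas the uniform bound \eqref{5.02} by itself only gives boundedness, not convergence to $0$.
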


\noindent{\bf Remark 2.} In \cite[Theorem 1.6]{YYDH2020}, they require $s>\frac{1}{6}$,\,
$k\geq 6$. In our results, we remove the requirements on $s$ and relax $k$ to $k\geq5$.

In order to prove Theorem 1.3, we introduce the following dyadic mixed Lebesgue spaces
\begin{eqnarray*}
&&\|u\|_{X_{T}^{s}}=\|\|\Delta_{j}u\|_{Y_{T}^{s}}\|_{\ell^{2}},\\ &&\|u\|_{Y_{T}^{s}}=\|u\|_{L_{T}^{\infty}H^{s}}+\|\langle D\rangle^{s+1-2\epsilon}u\|_{L_{x}^{\infty}L_{T}^{\frac{2}{1-\epsilon_{1}}}}
+\|\langle D\rangle^{s-\frac{k-2}{2k}-\epsilon_{2}}u\|_{L_{x}^{k}L_{T}^{\frac{k}{\epsilon_{1}}}},
\end{eqnarray*}
where $\epsilon_{1}=\frac{(1-2\epsilon)4\epsilon}{3+2\epsilon}$ and $(\Delta_{j})_{j\geq0}$ is a Littlewood-Paley analysis such that $\Delta_{0}$ localizes spatial frequencies in a ball $|\xi|\leq1 $ and $\Delta_{j} (j >0)$ in an annulus $|\xi|\sim 2^{j}$.

\begin{Theorem}\label{Theorem3}(Nonlinear smoothing.)
Suppose that $k\geq4$ and $s\geq\frac{\mu-2}{k}+\frac{1}{2}+\frac{6\epsilon}{k}+\epsilon_{2}$,\, $\epsilon,\, \epsilon_{2}>0$,\, $\epsilon_{1}=\frac{(1-2\epsilon)4\epsilon}{3+2\epsilon}$,\, $0\leq \mu\leq \min\{1-6\epsilon,\,1-6\epsilon+k(s-(\frac{1}{2}-\frac{1}{k})-\epsilon_{2})\}$.
Then,  we have
\begin{align}
&\quad\left\|\int_{0}^{t}U(t-\tau)\partial_{x}\left(\prod_{j=1}^{k+1}u_{j}\right)d\tau\right\|_{X_{T}^{\mu+s}}\nonumber\\
&\leq
C(T)\left(\sum_{j\geq0}2^{2(\mu+s+2\epsilon)j}\left\|\Delta_{j}
\left(\prod_{j=1}^{k+1}u_{j}\right)\right\|_{L_{x}^{1}L_{T}^{\frac{2}{1+\epsilon_{1}}}}^{2}\right)^{\frac{1}{2}}\nonumber\\
&\leq C(T)\prod_{j=1}^{k+1}\|u_{j}\|_{X_{T}^{s}},\label{1.05}
\end{align}
where $C(T)=C\max\left\{T^{\frac{4\epsilon^{2}}{3+2\epsilon}},\, T^{\frac{8\epsilon^{2}}{3+2\epsilon}},\, T^{\frac{4\epsilon^{2}}{3+2\epsilon}+\frac{\epsilon_1}{k}} \right\}$.

\end{Theorem}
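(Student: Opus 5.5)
The estimate splits into two inequalities, and I would prove them separately. The first is linear: it bounds the Duhamel term in $X_T^{\mu+s}$ by a weighted $\ell^2$ sum of $L_x^1L_T^{2/(1+\epsilon_1)}$ norms of the dyadic pieces. The second is multilinear: it controls those pieces by $\prod_j\|u_j\|_{X_T^s}$.

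\textbf{Step 1: the linear (dual smoothing) estimate.} Since $\Delta_j$ commutes with $U(t)$ and $\partial_x$, it suffices to bound
\[
\Big\|\int_0^tU(t-\tau)\partial_x\Delta_jF\,d\tau\Big\|_{Y_T^{\mu+s}}\lesssim C(T)\,2^{(\mu+s+2\epsilon)j}\|\Delta_jF\|_{L_x^1L_T^{2/(1+\epsilon_1)}}
\]
for each $j$, with $F=\prod_{l=1}^{k+1}u_l$, and then square-sum in $j$. There are three pieces of $Y_T$ to handle.

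For the $L_T^\infty H^{\mu+s}$ piece, I would use the dual of the Kenig--Ponce--Vega local smoothing estimate $\|\partial_xU(t)f\|_{L_x^\infty L_t^2}\lesssim\|f\|_{L^2}$. This gives $\|\int U(-\tau)\partial_xG\,d\tau\|_{L^2}\lesssim\|G\|_{L_x^1L_t^2}$. The Christ--Kiselev lemma, or a direct argument for the retarded integral, handles the truncation to $\tau<t$.

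For the $L_x^\infty L_T^{2/(1-\epsilon_1)}$ piece, the needed gain is $2^{(1-2\epsilon)j}$. I would use the inhomogeneous smoothing estimate $\|\partial_x^2\int_0^tU(t-\tau)G\,d\tau\|_{L_x^\infty L_t^2}\lesssim\|G\|_{L_x^1L_t^2}$. I would interpolate it against a trivial endpoint to reach the time exponents $2/(1-\epsilon_1)$ and $2/(1+\epsilon_1)$. Hölder in time then produces the powers $T^{4\epsilon^2/(3+2\epsilon)}$; the specific value of $\epsilon_1$ is what makes the exponents match.

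For the $L_x^kL_T^{k/\epsilon_1}$ piece, I would interpolate the dual smoothing estimate with the maximal function estimate $\|U(t)f\|_{L_x^4L_T^\infty}\lesssim\|D^{1/4}f\|_{L^2}$ and Strichartz-type estimates. This yields the factor $T^{\epsilon_1/k}$ and the derivative loss $\frac{k-2}{2k}$.

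\textbf{Step 2: the multilinear estimate.} Write $\Delta_jF=\sum\Delta_j(\prod_l\Delta_{j_l}u_l)$ and order the frequencies by symmetry, $j_1\ge j_2\ge\dots\ge j_{k+1}$, so that $j\lesssim j_1+C$. Use Hölder in the form
\[
L_x^1L_T^{2/(1+\epsilon_1)}\supset L_x^2L_T^\infty\cdot L_x^\infty L_T^{2/(1-\epsilon_1)}\cdot\prod^{k-1}L_x^{k}L_T^{k/\epsilon_1}.
\]
Here I am deliberately not matching the conjugate exponents yet; the real allocation is
\[
\frac1{1}=\frac12+\frac1{\infty}+\frac{k-?}{k},
\]
and I would adjust by placing two factors in $L_x^\infty L_T$ if necessary.

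Put the highest frequency $u_1$ in $L_x^\infty L_T^{2/(1-\epsilon_1)}$ to gain $2^{(s+1-2\epsilon)j_1}$. Put $u_2$ in $L_T^\infty L_x^2$, and the remaining $k-1$ factors in $L_x^kL_T^{k/\epsilon_1}$. Each of the latter costs $2^{(\frac{k-2}{2k}+\epsilon_2-s)j_l}$, which is summable when $s>\frac12-\frac1k+\epsilon_2$.

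The total exponent for the output frequency is then
\[
(\mu+s+2\epsilon)j-(s+1-2\epsilon)j_1+(\text{loss on low factors}).
\]
The condition $\mu\le1-6\epsilon$ should make the high-high-to-low and high-low interactions summable. The second constraint on $\mu$ should handle the case where all frequencies are comparable. In that case the loss $k(\frac12-\frac1k+\epsilon_2-s)$ must be absorbed, and this is exactly the second entry in $\min\{\cdot\}$. The lower bound on $s$ comes from the same comparable-frequency case.

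\textbf{Main obstacle.} I expect Step 1 to be the hardest part: the off-diagonal time exponents $2/(1\pm\epsilon_1)$, derived through interpolation with precise powers of $T$ and derivative gain $1-2\epsilon$. I would first try complex interpolation of the analytic family $D^{z}\int_0^tU(t-\tau)\,\cdot\,d\tau$ between the $L^2$-smoothing endpoint and the $L_x^\infty L_T^\infty$/$L_x^1L_T^1$ endpoints. The Hölder-in-$T$ step would then produce $C(T)$.
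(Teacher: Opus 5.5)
Your overall architecture matches the paper's: a linear dual-smoothing bound for each dyadic block, then a Littlewood--Paley product estimate with the top frequency placed in $L_x^\infty L_T^{2/(1-\epsilon_1)}$. However, Step~2 has a genuine gap. The H\"older allocation you write down does not close, and the repairs you suggest do not close it either. With one factor in $L_x^\infty$, one in an $L^2_x$-based norm and $k-1$ in $L_x^k$, the spatial exponents sum to $\frac12+\frac{k-1}{k}>1$, so H\"older does not land in $L_x^1$. Worse, the $L_T^\infty H^s$ component of $Y_T^s$ cannot enter an $L_x^pL_T^q$ H\"older inequality at all. Minkowski only gives $\|u\|_{L_T^\infty L_x^2}\le\|u\|_{L_x^2L_T^\infty}$, which is the wrong direction, and $L_x^2L_T^\infty$ is not one of the norms in $X_T^s$. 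Putting a second factor in $L_x^\infty L_T^{2/(1-\epsilon_1)}$ (the only $L_x^\infty$ norm available) makes the time exponents sum to at least $1-\epsilon_1>\frac{1+\epsilon_1}{2}$, and H\"older on $[0,T]$ cannot repair that. The identity you are missing is
\[
1=0+k\cdot\frac1k,\qquad \frac{1+\epsilon_1}{2}=\frac{1-\epsilon_1}{2}+k\cdot\frac{\epsilon_1}{k}.
\]
It is also the reason the third component of $Y_T^s$ has exactly these exponents. So the highest-frequency factor goes in $L_x^\infty L_T^{2/(1-\epsilon_1)}$, \emph{all} other $k$ factors go in $L_x^kL_T^{k/\epsilon_1}$, and there is no $L^2$ factor. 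This is what the paper does, splitting according to how many factors sit at $\Delta_0$. Your own loss count $k(\frac12-\frac1k+\epsilon_2-s)$ is in fact the count for this allocation, not for the one you stated. With it, the conditions $\mu\le1-6\epsilon$ and $\mu\le1-6\epsilon+k(s-\frac12+\frac1k-\epsilon_2)$ arise as you predict; the latter is equivalent to the lower bound on $s$.

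Step~1 also has a concrete defect in its first piece. The dual of the plain smoothing estimate $\|\partial_xU(t)f\|_{L_x^\infty L_t^2}\lesssim\|f\|_{L^2}$ puts $G$ in $L_x^1L_T^2$. On $[0,T]$ this is a \emph{stronger} norm than $L_x^1L_T^{2/(1+\epsilon_1)}$, since H\"older in time goes the wrong way, so it does not give the first inequality of the statement. All three pieces must go through a lossy estimate. The paper interpolates two homogeneous estimates at $\theta=\frac{4\epsilon}{3+2\epsilon}$: $\|\langle D\rangle U(t)f\|_{L_x^\infty L_t^2}\le C\|f\|_{L^2}$, and $\|\langle D\rangle^{-\frac12-\epsilon}U(t)f\|_{L_x^\infty L_T^{1/\epsilon}}\le CT^{\epsilon}\|f\|_{L^2}$ (Sobolev embedding plus H\"older in $t$). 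This yields $\|\langle D\rangle^{1-2\epsilon}U(t)f\|_{L_x^\infty L_T^{2/(1-\epsilon_1)}}\le CT^{\frac{4\epsilon^2}{3+2\epsilon}}\|f\|_{L^2}$, which is where $\epsilon_1$ and the first power of $T$ come from. Duality and Christ--Kiselev then give the $L_T^\infty L_x^2$ bound. The other two pieces follow by \emph{composing} this $L^2$ bound, not by interpolation: with the same estimate again (giving $T^{\frac{8\epsilon^2}{3+2\epsilon}}$), and with $\|\langle D\rangle^{-\frac{k-2}{2k}-\epsilon_2}U(t)f\|_{L_x^kL_T^{k/\epsilon_1}}\le CT^{\frac{\epsilon_1}{k}}\|f\|_{L^2}$ (giving $T^{\frac{4\epsilon^2}{3+2\epsilon}+\frac{\epsilon_1}{k}}$). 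Your alternative, complex-interpolating the retarded double-smoothing estimate against an $L_x^1L_T^1\to L_{x,T}^\infty$ endpoint, might be made to work. But it requires controlling imaginary powers of $\langle D\rangle$ at the $L_x^1$ and $L_x^\infty$ endpoints, and it would not directly produce the stated $C(T)$. The paper avoids both issues because $\langle D\rangle^{iy}$ is unitary on the $L^2$ data side of the homogeneous estimate.
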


\noindent{\bf Remark 3.} Correia and Leite \cite{CL2025} obtained the same nonlinear smoothing effect using frequency-restricted estimates, but our method of proof is entirely different from that in \cite[Theorem 1.1]{CL2025}. When $\mu+s=\frac{1}{2}+\epsilon$, we obtain $s>\frac{1}{2}-\frac{2}{k+1}$, which is the same as \cite[Theorem 1.6]{YYDH2020}, but our method of proof is entirely different from that in \cite[Theorem 1.6]{YYDH2020}.
Following the ideas in \cite{RV2012,YWY2025}, we prove Theorem 1.3.

\begin{Theorem}\label{Theorem4}(Spatial decay.)
Suppose that $k\geq4$,\, $s>\frac{1}{2}-\frac{2}{k+1}$,\, $f(x)\in H^{s}(\R)\cap\hat{L}^{\infty}(\R)$ and $u(t,x)$ is the solution of \eqref{1.01} with $u(0,x)=f(x)$.
Then, there exists a sufficiently small $T>0$  such that for $t\in[0,T]$, we have
\begin{eqnarray}
&&\lim_{|x|\rightarrow \infty}u(t,x)=0.\label{1.06}
\end{eqnarray}
In particular, for $f(x)\in H^{s}(\R)$,\,$s>\frac{1}{2}-\frac{2}{k+1}$,\, $k\geq4$, we have
\begin{eqnarray}
&&\lim_{|x|\rightarrow \infty}(u(t,x)-U(t)f(x))=0.\label{1.07}
\end{eqnarray}

\end{Theorem}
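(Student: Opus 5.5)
We plan to combine a local well-posedness theory in $X_T^s$ with the nonlinear smoothing estimate of Theorem \ref{Theorem3}, and then use Riemann--Lebesgue type arguments on the Fourier side. Write the solution in Duhamel form
\begin{eqnarray*}
&&u(t)=U(t)f-\frac{1}{k+1}\int_{0}^{t}U(t-\tau)\partial_{x}(u^{k+1})d\tau=:U(t)f+w(t).
\end{eqnarray*}

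The first step is to obtain the solution $u\in X_T^s$ for $T$ small. We need the linear estimate $\|U(t)f\|_{X_T^s}\leq C\|f\|_{H^s}$, which follows from the Kato smoothing estimate for the $L_x^\infty L_T^{2/(1-\epsilon_1)}$ component (interpolated with $L^\infty$ in time for the loss $2\epsilon$) and from the maximal-type/Strichartz estimate for the $L_x^kL_T^{k/\epsilon_1}$ component. Combined with Theorem \ref{Theorem3} taking $\mu=0$, this gives a contraction in a ball of $X_T^s$, with $T$ chosen so that $C(T)\|f\|_{H^s}^k$ is small. Here the factor $C(T)$ carries a positive power of $T$.

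The second step applies Theorem \ref{Theorem3} with $\mu$ chosen so that $\mu+s=\frac12+\epsilon$. Remark 3 indicates that this is admissible exactly when $s>\frac12-\frac{2}{k+1}$, after choosing $\epsilon,\epsilon_2$ small. Then $w\in X_T^{1/2+\epsilon}\subset L_T^\infty H^{1/2+\epsilon}$. For each $t\in[0,T]$, the Fourier transform of $w(t)$ lies in $\langle\xi\rangle^{-1/2-\epsilon}L^2_\xi\subset L^1_\xi$ by Cauchy--Schwarz. By the Riemann--Lebesgue lemma $w(t,\cdot)\in C_0(\R)$, so $\lim_{|x|\to\infty}w(t,x)=0$. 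This is exactly \eqref{1.07}, and it needs no assumption beyond $f\in H^s$.

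For \eqref{1.06}, it remains to treat $U(t)f$. If $f\in\hat L^\infty$, we interpret this as $\hat f\in L^1$; the norm intended in the paper should be checked, but this is the condition under which Riemann--Lebesgue applies. Then $\mathscr{F}_x(U(t)f)=e^{it\xi^3}\hat f\in L^1$, so $U(t)f\in C_0(\R)$ for every $t$, and adding $w(t)$ gives \eqref{1.06}.

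The main obstacle is the first step: verifying that the linear flow lies in $X_T^s$, with the correct derivative gains in each component. The $\|\langle D\rangle^{s+1-2\epsilon}U(t)f\|_{L_x^\infty L_T^{2/(1-\epsilon_1)}}$ bound requires interpolating the Kato smoothing estimate $\|\partial_xU(t)f\|_{L_x^\infty L_t^2}\lesssim\|f\|_{L^2}$ with a trivial bound, and using Hölder in $T$ on dyadic pieces. The $L_x^kL_T^{k/\epsilon_1}$ bound should follow from the Kenig--Ponce--Vega maximal/Strichartz interpolation family, which yields loss $\frac{k-2}{2k}+$. Once these are in place, the contraction and the smoothing step are direct consequences of Theorem \ref{Theorem3}.
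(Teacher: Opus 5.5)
Your proposal is correct and follows essentially the same route as the paper. You construct $u$ in $X_T^s$ by contraction, using Theorem \ref{Theorem3} with $\mu=0$ together with the linear bounds \eqref{2.07} and \eqref{2.019}. You then apply Theorem \ref{Theorem3} with $\mu+s=\frac12+\epsilon$; the paper instead takes the extremal admissible $\mu=ks-\frac k2+2-6\epsilon-k\epsilon_2$, which gives the same threshold on $s$. Finally, you use the Riemann--Lebesgue lemma for both $U(t)f$ (with $\hat f\in L^1$) and the Duhamel term. Your Cauchy--Schwarz observation $\langle\xi\rangle^{-\frac12-\epsilon}L^2_\xi\subset L^1_\xi$ makes explicit the step the paper leaves implicit in passing from \eqref{7.011} to \eqref{7.012}.
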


\begin{Theorem}\label{Theorem5}(Almost sure spatial decay of the inhomogeneous part of the solution in the integral form.)
Assume the conditions of Theorem 1.2 hold and let $u(t,x)$ be the solution to equation \eqref{1.01}. Then, we have
\begin{eqnarray}
&&\mathbb{P}\left(\left\{\omega: \forall t\in I_{\omega}, \lim_{|x|\rightarrow \infty}\left(u(t,x)-U(t)f^{\omega}(x)\right)=0\right\}\right)=1,\label{1.08}
\end{eqnarray}
where $I_{\omega}$ appears in Theorem 1.1.

\end{Theorem}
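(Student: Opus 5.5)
We plan to reduce \eqref{1.08} to the regularity of the nonlinear part $v:=u-U(t)f^{\omega}$ established in the proof of Theorem 1.1. Under the hypotheses of Theorem 1.2 we have $\sigma=\frac{1}{2}+\epsilon$. So for almost every $\omega$, Theorem 1.1 gives a time interval $I_{\omega}$ on which $v\in C(I_{\omega},H^{\sigma}(\R))$, and $v$ solves the Duhamel equation
\begin{eqnarray*}
v(t)=-\frac{1}{k+1}\int_{0}^{t}U(t-\tau)\partial_{x}\left((U(\tau)f^{\omega}+v(\tau))^{k+1}\right)d\tau .
\end{eqnarray*}
We fix such a full-measure set $\Omega_{0}$ once and for all, namely the set on which the fixed-point argument of Theorem 1.1 succeeds. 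The key point is that for \emph{every} $t\in I_{\omega}$, the single function $v(t)$ lies in $H^{\frac{1}{2}+\epsilon}(\R)$. No further probabilistic input is then needed, so the quantifier ``$\forall t\in I_{\omega}$'' sits inside the event without any uncountable-union issue.

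The main step is the deterministic fact that $H^{\frac12+\epsilon}(\R)$ functions vanish at infinity. Let $g\in H^{\frac{1}{2}+\epsilon}(\R)$. By Cauchy--Schwarz,
\begin{eqnarray*}
\|\hat g\|_{L^{1}}\leq \|\langle\xi\rangle^{-\frac12-\epsilon}\|_{L^{2}}\|g\|_{H^{\frac12+\epsilon}}<\infty ,
\end{eqnarray*}
so $\hat g\in L^{1}(\R)$. The Riemann--Lebesgue lemma then gives that $g(x)=\frac{1}{\sqrt{2\pi}}\int e^{ix\xi}\hat g(\xi)d\xi$ is continuous and satisfies $g(x)\to0$ as $|x|\to\infty$. (Alternatively, approximate $g$ in $H^{\frac12+\epsilon}$ by $C_c^\infty$ functions and use the embedding $H^{\frac12+\epsilon}\hookrightarrow L^\infty$.) Applying this with $g=v(t)$ for each $t\in I_{\omega}$ and each $\omega\in\Omega_{0}$ gives $\lim_{|x|\to\infty}(u(t,x)-U(t)f^{\omega}(x))=0$. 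This shows the event in \eqref{1.08} contains $\Omega_{0}$, hence has probability one.

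The only delicate point is making sure the object controlled by Theorem 1.1 really is $u-U(t)f^{\omega}$ in $C(I_\omega,H^{\sigma})$ with $\sigma=\frac12+\epsilon$ admissible. We must check that the parameter constraints in Theorem 1.2 (the lower bound on $a$ with $\sigma=\frac12+\epsilon$, and $\sigma>\frac12-\frac2k+\frac{4s_1}{k}$, which is automatic) match those of Theorem 1.1. They do, since Theorem 1.2's hypotheses are exactly Theorem 1.1's with that choice of $\sigma$. One should also note that $u(t,x)-U(t)f^{\omega}(x)$ is interpreted pointwise through the continuous representative of $v(t)$. Otherwise the argument is immediate.
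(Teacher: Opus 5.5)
Your proposal is correct and follows the paper's route: take $\sigma=\frac12+\epsilon$ in the almost sure local theory of Theorem 1.1, so that $u-U(t)f^{\omega}$ (the Duhamel term) lies in $H^{\frac12+\epsilon}(\R)$ for every $t\in I_{\omega}$, and then deduce decay at infinity. Your explicit Riemann--Lebesgue argument ($\hat g\in L^{1}$) supplies the embedding $H^{\frac12+\epsilon}(\R)\subset C_{0}(\R)$. The paper uses this embedding only implicitly when it passes from the bound \eqref{8.01} to the limit \eqref{8.03}.
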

\noindent{\bf Remark 4.} In \eqref{1.07} of Theorem 1.4, we require $s>\frac{1}{2}-\frac{2}{k+1}$,\, $k\geq4$. By using the technique of initial data randomization \cite{SSW2021}, we remove the requirements on $s$ in Theorem 1.5.

The remainder of this paper is organized as follows. In Section 2,  we give some
preliminaries. In Section 3,
 we prove Proposition 3.1 and Lemma 3.1, which are key to proving Theorem 1.1. In Section 4, we prove
 Theorem 1.1. In Section 5, we prove
 Theorem 1.2. In Section 6, we prove
 Theorem 1.3. In Section 7, we prove
 Theorem 1.4. In Section 8, we prove
 Theorem 1.5.

\bigskip
\bigskip

\section{Preliminary lemmas}

\setcounter{equation}{0}

\setcounter{Theorem}{0}

\setcounter{Lemma}{0}

\setcounter{Proposition}{0}

\setcounter{section}{2}
In this section, we present some lemmas, which will be used in the subsequent proofs.

\begin{Lemma}\label{lem2.1}
Suppose that $k\geq4$,\, $4\leq q<\infty$,\, $p>4$,\, $2\leq r<\infty$,\, $\epsilon,\, \epsilon_{2}>0$,\, $\epsilon_{1}=\frac{(1-2\epsilon)4\epsilon}{3+2\epsilon}$ and $\frac{2}{q}+\frac{1}{r}=\frac{1}{2}$,\, $\alpha=\frac{2}{r}-\frac{1}{q}$,\, $s>\frac{1}{2}-\frac{1}{p}$. Then, for $f\in L^{2}(\R)$, we have
\begin{eqnarray}
&&\left\|DU(t)f\right\|_{L_{x}^{\infty}L_{t}^{2}}\leq C\|f\|_{L^{2}},\label{2.01}\\
&&\left\|D^{-\frac{1}{4}}U(t)f\right\|_{L_{x}^{4}L_{t}^{\infty}}\leq C\|f\|_{L^{2}},\label{2.02}\\
&&\left\|D^{\alpha}U(t)f\right\|_{L_{x}^{q}L_{t}^{r}}\leq C\|f\|_{L^{2}},\label{2.03}\\
&&\left\|\langle D\rangle U(t)f\right\|_{L_{x}^{\infty}L_{t}^{2}}\leq C\|f\|_{L^{2}},\label{2.04}\\
&&\left\|\langle D\rangle^{-\frac{1}{4}}U(t)f\right\|_{L_{x}^{4}L_{t}^{\infty}}\leq C\|f\|_{L^{2}},\label{2.05}\\
&&\left\|\langle D\rangle^{-s}U(t)f\right\|_{L_{x}^{p}L_{t}^{\infty}}\leq C\|f\|_{L^{2}},\label{2.06}\\
&&\left\|\langle D\rangle^{-\frac{k-2}{2k}-\epsilon_{2}}U(t)f\right\|_{L_{x}^{k}L_{T}^{\frac{k}{\epsilon_{1}}}}\leq CT^{\frac{\epsilon_{1}}{k}}\|f\|_{L^{2}}.\label{2.07}
\end{eqnarray}
where
\begin{eqnarray}
&&U(t)f=\int_{\SR}e^{ix\xi}e^{it\xi^{3}}\mathscr{F}_{x}f(\xi)d\xi.\label{2.08}
\end{eqnarray}

\end{Lemma}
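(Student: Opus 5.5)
The estimates \eqref{2.01}--\eqref{2.03} are the classical linear estimates for the Airy group of Kenig--Ponce--Vega, and I will treat them as known. \eqref{2.01} is the sharp local smoothing (Kato) estimate. It follows from the change of variables $\eta=\xi^{3}$ in the $\xi$-integral defining $U(t)f$: for fixed $x$, $t\mapsto DU(t)f(x)$ becomes a one-dimensional Fourier transform in $t$, and Plancherel in $t$ gives the bound by $\|f\|_{L^2}$ uniformly in $x$. \eqref{2.02} is the Kenig--Ponce--Vega maximal function estimate. \eqref{2.03} follows by complex (Stein) interpolation between \eqref{2.01}, i.e.\ $(q,r,\alpha)=(\infty,2,1)$, and \eqref{2.02}, i.e.\ $(q,r,\alpha)=(4,\infty,-\tfrac14)$. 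The analytic family is $D^{z}U(t)f$, and the relations $\frac{2}{q}+\frac{1}{r}=\frac12$, $\alpha=\frac{2}{r}-\frac{1}{q}$ are exactly the linear interpolation of the exponents.

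For the inhomogeneous versions \eqref{2.04}--\eqref{2.06}, split $f=P_{\leq1}f+P_{\geq1}f$. On high frequencies $\langle\xi\rangle\sim|\xi|$, so \eqref{2.04}, \eqref{2.05} follow from \eqref{2.01}, \eqref{2.02} together with the boundedness of the Fourier multiplier $\langle D\rangle D^{-1}$ (resp.\ $\langle D\rangle^{-1/4}D^{1/4}$) restricted to $|\xi|\ge1$. For \eqref{2.04} the multiplier acts in $x$ only and commutes with $U(t)$, so it is harmless on $L^2$ data. On low frequencies, $\langle D\rangle P_{\leq1}U(t)f$ is controlled in $L_x^\infty L_t^2$ by writing it as $\int_{|\xi|\le1}$ and using the same change of variables $\eta=\xi^3$. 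The Jacobian $|\xi|^{-2}$ is singular at $0$, so I will handle the region $|\xi|\le1$ by a further dyadic splitting $|\xi|\sim 2^{-m}$. Each piece gives the bound $2^{m}\|P_{2^{-m}}f\|_{L^2}$ by the Kato argument, but that grows in $m$. \emph{This low-frequency part of \eqref{2.04} is the step I expect to be delicate.} Indeed, with an infinite time interval the bound $\|U(t)P_{\le1}f\|_{L_x^\infty L_t^2}$ genuinely fails at low frequency, so I expect the intended statement is over a finite interval $[0,T]$ (with a $T$-dependent constant), or that the low-frequency bound is obtained there by Sobolev/Bernstein: $\|P_{\le1}U(t)f\|_{L^\infty_x}\le C\|f\|_{L^2}$, then integrating in $t$ over $[0,T]$. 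I will adopt the local-in-time reading.

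For the low-frequency part of \eqref{2.05}, Bernstein gives $\|P_{\le1}U(t)f\|_{L_x^4L_t^\infty}$, and the maximal estimate \eqref{2.02} applies directly, since $D^{-1/4}$ and $\langle D\rangle^{-1/4}$ differ at low frequency by the bounded factor $|\xi|^{1/4}\langle\xi\rangle^{-1/4}$. That factor is an acceptable multiplier: it is homogeneous-type and Mikhlin-bounded away from the origin, and a smooth cutoff handles the origin. \eqref{2.06} with $p>4$ is the known $L^p_xL^\infty_t$ maximal estimate for $s>\frac12-\frac1p$. I will prove it by interpolating \eqref{2.05} ($p=4$, $s=\frac14$) with the Sobolev-type bound $\|\langle D\rangle^{-\frac12-\delta}U(t)f\|_{L_x^\infty L_t^\infty}\le C\|f\|_{L^2}$, which holds since $\langle\xi\rangle^{-1/2-\delta}\in L^2$. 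Interpolating gives exactly the line $s=\frac12-\frac1p$, with an $\varepsilon$ loss.

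Finally, for \eqref{2.07}, start from \eqref{2.03} with $q=k$, $r=\frac{2k}{k-4}$, where $k\ge4$ guarantees $r\ge2$. This gives $\alpha=\frac{k-4}{k}-\frac1k=\frac{k-5}{k}$, i.e.\ control of $\langle D\rangle^{-\frac{5-k}{k}}$ in $L_x^kL_t^{2k/(k-4)}$. To reach the time exponent $\frac{k}{\epsilon_1}$, I will instead interpolate \eqref{2.06} at $p=k$ (time exponent $\infty$, derivative cost $\frac12-\frac1k+$) with an $L_x^kL_t^{r}$ estimate from \eqref{2.03}. On the interval $[0,T]$, Hölder in time turns $L^{k/\epsilon_1}_T$ into a small power of $T$. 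Concretely, bound $\|F\|_{L_T^{k/\epsilon_1}}\le T^{\epsilon_1/k}\|F\|_{L_T^\infty}$, and then use \eqref{2.06} with $p=k$ and $s=\frac{k-2}{2k}+\epsilon_2>\frac12-\frac1k$. Since $\frac{k-2}{2k}=\frac12-\frac1k$, this gives \eqref{2.07} immediately, with the factor $T^{\epsilon_1/k}$.
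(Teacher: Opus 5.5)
Your proposal follows the paper's proof essentially step for step: \eqref{2.01}--\eqref{2.02} are quoted from Kenig--Ponce--Vega, \eqref{2.03} is obtained by interpolating them, and \eqref{2.04} uses the same low/high frequency split (Minkowski plus Sobolev at low frequency, the bounded multiplier $\langle D\rangle|D|^{-1}$ applied to the high-frequency part of the datum). Likewise \eqref{2.06} interpolates \eqref{2.05} with the $L^\infty_{xt}$ Sobolev bound, and \eqref{2.07} is H\"older in time followed by \eqref{2.06} with $p=k$; for $k=4$, where the hypothesis $p>4$ of \eqref{2.06} fails, \eqref{2.05} gives it directly.

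Your caveat on \eqref{2.04} is correct and exposes a slip in the paper. After the substitution $\eta=\xi^{3}$, Plancherel in $t$ gives $\|\langle D\rangle U(t)f(x)\|_{L^2_t(\R)}=c\,\|\langle\xi\rangle|\xi|^{-1}\hat{f}\|_{L^2_\xi}$ for every $x$, so \eqref{2.04} is false over $t\in\R$. Consequently the paper's step $\|\langle D\rangle^{\frac{3}{2}+\epsilon}U(t)P_{\leq1}f\|_{L^2_{xt}}\leq C\|f\|_{L^2}$ holds only on $[0,T]$, with a factor $T^{1/2}$. This is harmless, because \eqref{2.04} is only used on $[0,T]$ with $T\leq1$ in Lemma 2.2.
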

\noindent{\bf Proof.}
For \eqref{2.01} and \eqref{2.02}, we refer to \cite[Theorem 3.5]{KPV1993} and \cite[Lemma 3.29]{KPV1993}, respectively. By applying the interpolation theorem in mixed Lebesgue spaces \cite[Theorem 1]{BP1961} to \eqref{2.01} and \eqref{2.02}, we obtain \eqref{2.03}. For \eqref{2.04}, by using Minkowski inequality, \eqref{2.01} and $H^{\frac{1}{2}+\epsilon}(\R)\hookrightarrow L^{\infty}(\R)$, we have
\begin{align}
\left\|\langle D\rangle U(t)P_{\leq 1}f\right\|_{L_{x}^{\infty}L_{t}^{2}}&\leq C\left\|\langle D\rangle U(t)P_{\leq1}f\right\|_{L_{t}^{2}L_{x}^{\infty}}\nonumber\\
&\leq C\left\|\langle D\rangle^{\frac{3}{2}+\epsilon} U(t)P_{\leq1}f\right\|_{L_{xt}^{2}}
\leq C\|f\|_{L^{2}},\label{2.09}\\
\left\|\langle D\rangle U(t)P_{\geq1}f\right\|_{L_{x}^{\infty}L_{t}^{2}}&\leq C\left\||D| U(t)P_{\geq1}\langle D\rangle |D|^{-1}f\right\|_{L_{x}^{\infty}L_{t}^{2}}\nonumber\\
&\leq C\left\|P_{\geq1}\langle D\rangle |D|^{-1}f\right\|_{L_{x}^{2}}\leq C\|f\|_{L^{2}}.\label{2.010}
\end{align}
Combining \eqref{2.09} with \eqref{2.010}, we have that \eqref{2.04} is valid. From \eqref{2.02}, we obtain \eqref{2.05}. For \eqref{2.06}, noting that
\begin{eqnarray}
&&\|U(t)f\|_{L_{xt}^{\infty}}\leq C\|f\|_{H^{s_{1}}},\quad s_{1}>\frac{1}{2}.\label{2.011}
\end{eqnarray}
By applying the interpolation theorem in mixed Lebesgue spaces \cite[Theorem 1]{BP1961} to \eqref{2.05} and \eqref{2.011}, we obtain \eqref{2.06}. For \eqref{2.07}, by using H\"{o}lder inequality with respect to $t$, and using \eqref{2.05}-\eqref{2.06}, we have
\begin{align}
\left\|\langle D\rangle^{-\frac{k-2}{2k}-\epsilon_{2}}U(t)f\right\|_{L_{x}^{k}L_{T}^{\frac{k}{\epsilon_{1}}}}&\leq CT^{\frac{\epsilon_{1}}{k}}\left\|\langle D\rangle^{-\frac{k-2}{2k}-\epsilon_{2}}U(t)f\right\|_{L_{x}^{k}L_{T}^{\infty}}\nonumber\\
&\leq CT^{\frac{\epsilon_{1}}{k}}\|f\|_{L^{2}}.\label{2.012}
\end{align}
It follows from \eqref{2.012} that \eqref{2.07} holds.

The proof of Lemma 2.1 is completed.

\begin{Lemma}\label{lem2.2}
Suppose that $k\geq4$,\, $T\leq 1$,\, $s,\,t\in I\subset[0,T]$,\, $\epsilon,\, \epsilon_{2}>0$,\, $\epsilon_{1}=\frac{(1-2\epsilon)4\epsilon}{3+2\epsilon}$. Then, we have
\begin{eqnarray}
&&\left\|\int_{0}^{t}U(t-s)f(s)ds\right\|_{L_{x}^{2}}\leq CT^{\frac{4\epsilon^{2}}{3+2\epsilon}}\|\langle D\rangle^{-(1-2\epsilon)}f\|_{L_{x}^{1}L_{T}^{\frac{2}{1+\epsilon_{1}}}},\label{2.013}\\
&&\left\|\int_{I} U(t-s)f(s)ds\right\|_{L_{x}^{2}}\leq CT^{\frac{4\epsilon^{2}}{3+2\epsilon}}\|\langle D\rangle^{-(1-2\epsilon)}f\|_{L_{x}^{1}L_{T}^{\frac{2}{1+\epsilon_{1}}}},\label{2.014}\\
&&\left\|\langle D\rangle^{1-2\epsilon}\int_{0}^{t}U(t-s)f(s)ds\right\|_{L_{x}^{\infty}L_{T}^{\frac{2}{1-\epsilon_{1}}}}\leq CT^{\frac{8\epsilon^{2}}{3+2\epsilon}}\|\langle D\rangle^{-(1-2\epsilon)}f\|_{L_{x}^{1}L_{T}^{\frac{2}{1+\epsilon_{1}}}},\label{2.015}\\
&&\left\|\langle D\rangle^{-\frac{k-2}{2k}-\epsilon_{2}}\int_{0}^{t}U(t-s)f(s)ds\right\|_{L_{x}^{k}L_{T}^{\frac{k}{\epsilon_{1}}}}\leq CT^{\frac{4\epsilon^{2}}{3+2\epsilon}+\frac{\epsilon_{1}}{k}}\|\langle D\rangle^{-(1-2\epsilon)}f\|_{L_{x}^{1}L_{T}^{\frac{2}{1+\epsilon_{1}}}}.\quad\label{2.016}
\end{eqnarray}

\end{Lemma}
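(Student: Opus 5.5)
The argument is a duality/$TT^{*}$ argument that starts from the homogeneous estimates of Lemma 2.1 and inserts a time factor through H\"older's inequality in $t$. The basic ingredient is the dual of a smoothing estimate with a slightly non-endpoint time exponent. Interpolate \eqref{2.04} (the bound $L_x^\infty L_t^2$ with gain $\langle D\rangle^{1}$) with \eqref{2.05}, or with the trivial bound $\|U(t)f\|_{L_x^2L_t^\infty}$-type energy, via \cite[Theorem 1]{BP1961}. The target is
\begin{eqnarray*}
\left\|\langle D\rangle^{1-2\epsilon}U(t)f\right\|_{L_x^{\infty}L_T^{q_0}}\leq C\|f\|_{L^2}
\end{eqnarray*}
for some $q_0$ slightly larger than $2$, losing $2\epsilon$ derivatives. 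If the interpolation does not give this directly, I will use \eqref{2.04} in its local-in-time form: H\"older in $t$ on $[0,T]$ converts $L_T^{2}$ norms into $L_T^{2/(1-\epsilon_1)}$-type norms with a positive power of $T$.

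The exponents are tuned so that H\"older in time from $L_T^{q_0}$ down to $L_T^{2/(1-\epsilon_1)}$ gains exactly $T^{4\epsilon^2/(3+2\epsilon)}$. Note that $\frac{1-\epsilon_1}{2}-\frac1{q_0}$ should equal $\frac{4\epsilon^{2}}{3+2\epsilon}$. This fixes $q_0$ and explains the formula $\epsilon_1=\frac{(1-2\epsilon)4\epsilon}{3+2\epsilon}$: it comes from interpolating between $\theta=1$ (the $L^2_t$ smoothing) and the $L_x^4L_t^\infty$ maximal estimate with weight $1-2\epsilon$.

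The steps are as follows.

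First, \eqref{2.014} follows by duality from the homogeneous estimate. Write
\begin{eqnarray*}
\int_I U(t-s)f(s)\,ds=U(t)\int_I U(-s)f(s)\,ds.
\end{eqnarray*}
Since $U(t)$ is unitary on $L^2$, it suffices to bound $\|\int_I U(-s)f(s)\,ds\|_{L^2}$. Pair this with $g\in L^2$, move the operator onto $g$, and apply H\"older in $(x,s)$ with $L_x^1L_T^{2/(1+\epsilon_1)}$ against $L_x^\infty L_T^{2/(1-\epsilon_1)}$. Then bound $\|\langle D\rangle^{1-2\epsilon}U(s)g\|_{L_x^\infty L_T^{2/(1-\epsilon_1)}}\le CT^{4\epsilon^2/(3+2\epsilon)}\|g\|_{L^2}$ by the homogeneous estimate of the first paragraph.

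Second, \eqref{2.013} follows from \eqref{2.014} with $I=[0,t]$, uniformly in $t$.

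Third, for \eqref{2.015} and \eqref{2.016}, first treat the untruncated operator $\int_0^T U(t-s)f(s)\,ds=U(t)F$, where $F=\int_0^TU(-s)f\,ds$ satisfies $\|F\|_{L^2}\lesssim T^{4\epsilon^2/(3+2\epsilon)}\|\langle D\rangle^{-(1-2\epsilon)}f\|$. Applying the homogeneous estimate once more gives a second factor $T^{4\epsilon^2/(3+2\epsilon)}$ in \eqref{2.015}, for a total of $T^{8\epsilon^2/(3+2\epsilon)}$. Applying \eqref{2.07} gives the factor $T^{\epsilon_1/k}$ in \eqref{2.016}.

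The remaining step is to pass from $\int_0^T$ to the truncated $\int_0^t$. Both target norms have time exponent $\frac{2}{1-\epsilon_1}$ or $\frac{k}{\epsilon_1}$, which is strictly larger than the source exponent $\frac{2}{1+\epsilon_1}$. The Christ--Kiselev lemma therefore applies in its mixed-norm form, with the time variable innermost and the $x$-norms treated as Banach-valued. This is the \emph{main obstacle}. The Christ--Kiselev lemma is usually stated for $L_t^pB$ with time outermost, whereas here time is the inner variable of $L_x^pL_t^q$. I would handle this with the version for maximal/smoothing estimates used by Kenig--Ponce--Vega and Molinet--Ribaud. Alternatively, I would reprove it directly: decompose $\chi_{s<t}$ via Whitney-type dyadic decompositions of $[0,T]^2$, apply the untruncated estimate to each block, and sum the resulting geometric series using $\frac{1-\epsilon_1}{2}<\frac{1+\epsilon_1}{2}$. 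The constants then remain independent of $T\le1$ apart from the stated powers.
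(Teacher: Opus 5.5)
Your skeleton matches the paper's: duality for \eqref{2.014}, then the factorization $U(t)\int_I U(-s)f(s)\,ds$ for the untruncated versions of \eqref{2.015}--\eqref{2.016}, then Christ--Kiselev. The gap is the estimate everything rests on, $\|\langle D\rangle^{1-2\epsilon}U(t)g\|_{L_x^{\infty}L_T^{2/(1-\epsilon_1)}}\le CT^{\frac{4\epsilon^2}{3+2\epsilon}}\|g\|_{L^2}$ (the paper's \eqref{2.019}). Neither route you propose produces it.

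Interpolating \eqref{2.04} with \eqref{2.05} gives spatial exponent $\frac1p=\frac{\theta}{4}>0$, and interpolating with any $L_x^2$-based bound gives $\frac1p=\frac\theta2$. So you never reach $L_x^\infty$ unless $\theta=0$. (There is no trivial $L_x^2L_t^\infty$ bound in any case; energy conservation is an $L_t^\infty L_x^2$ statement.) This interpolation also does not explain $\epsilon_1$: matching the weight $1-2\epsilon$ against \eqref{2.05} forces $\theta=\frac{8\epsilon}{5}$ and time exponent $\frac{2}{1-8\epsilon/5}$.

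The fallback fails for a more basic reason. On $[0,T]$, H\"older in $t$ bounds a \emph{lower} time exponent by a higher one, so it can never upgrade the $L_T^2$ smoothing estimate to $L_T^{2/(1-\epsilon_1)}$. Your own bookkeeping (``from $L_T^{q_0}$ down to $L_T^{2/(1-\epsilon_1)}$'') needs a homogeneous $L_x^\infty L_T^{q_0}$ bound with $q_0>\frac{2}{1-\epsilon_1}$, and that is exactly what is missing.

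The missing ingredient is a second $L_x^\infty$ endpoint with large time exponent. By $H^{\frac12+\epsilon}\hookrightarrow L^\infty$ and unitarity, $\|\langle D\rangle^{-\frac12-\epsilon}U(t)g\|_{L_x^\infty L_t^\infty}\le C\|g\|_{L^2}$. Interpolate this with \eqref{2.04} on $[0,T]$ (use it there, since its low-frequency part fails globally in time) at $\theta=\frac{4\epsilon}{3+2\epsilon}$. This gives weight $1-2\epsilon$ in $L_x^\infty L_T^{q_0}$ with $\frac1{q_0}=\frac{1-\theta}{2}=\frac{3-2\epsilon}{2(3+2\epsilon)}$, which is exactly the $q_0$ your relation $\frac{1-\epsilon_1}{2}-\frac1{q_0}=\frac{4\epsilon^2}{3+2\epsilon}$ demands. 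H\"older then gains $T^{\theta\epsilon}=T^{\frac{4\epsilon^2}{3+2\epsilon}}$. This is the paper's argument: it applies H\"older first, giving \eqref{2.018}, and then interpolates. Alternatively, Sobolev embedding in $t$ combined with Kato smoothing, $\|D^{\frac3q-\frac12}U(t)g\|_{L_x^\infty L_t^q}\lesssim\|g\|_{L^2}$, works at high frequency.

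With this in place the rest of your plan goes through, and two of your departures are improvements. First, \eqref{2.013} is indeed just \eqref{2.014} with $I=[0,t]$; the paper's appeal to Christ--Kiselev there is unnecessary. Second, you are right that Christ--Kiselev with time as the inner variable needs an argument, which the paper omits. Your Whitney-type proof works because $L_x^1L_T^{p}$ ($p\ge1$) is $\ell^{p}$-superadditive over disjoint time intervals, while $L_x^{q}L_T^{r}$ is $\ell^{\min(q,r)}$-subadditive for functions with disjoint time supports. So for \eqref{2.016} the condition is $\frac{2}{1+\epsilon_1}<k$, not a comparison of time exponents alone.
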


\noindent{\bf Proof.}
By using  dual idea, to prove \eqref{2.014}, we only need to prove
\begin{eqnarray}
&&\left|\left\langle\int_{I}U(-s)f(s)ds,\, g\right\rangle\right|\leq CT^{\frac{4\epsilon^{2}}{3+2\epsilon}}\|g\|_{L_{x}^{2}}\|\langle D\rangle^{-(1-2\epsilon)}f\|_{L_{x}^{1}L_{T}^{\frac{2}{1+\epsilon_{1}}}}.\label{2.017}
\end{eqnarray}
By using Minkowski inequality, H\"{o}lder inequality and $H^{\frac{1}{2}+\epsilon}(\R)\hookrightarrow L^{\infty}(\R)$, we have
\begin{align}
\|\langle D\rangle^{-\frac{1}{2}-\epsilon}U(t)f\|_{L_{x}^{\infty}L_{T}^{\frac{1}{\epsilon}}}&\leq C\|\langle D\rangle^{-\frac{1}{2}-\epsilon}U(t)f\|_{L_{T}^{\frac{1}{\epsilon}}L_{x}^{\infty}}\nonumber\\
&\leq CT^{\epsilon}\|U(t)f\|_{L_{T}^{\infty}L_{x}^{2}}\leq CT^{\epsilon}\|f\|_{L^{2}}.\label{2.018}
\end{align}
Applying the interpolation theorem in mixed Lebesgue spaces \cite[Theorem 1]{BP1961} to \eqref{2.04} and \eqref{2.018}, we obtain
\begin{eqnarray}
&&\|\langle D\rangle^{1-2\epsilon}U(t)f\|_{L_{x}^{\infty}L_{T}^{\frac{2}{1-\epsilon_{1}}}}\leq CT^{\frac{4\epsilon^{2}}{3+2\epsilon}}\|f\|_{L^{2}}.\label{2.019}
\end{eqnarray}
Applying H\"{o}lder inequality and \eqref{2.019}, we obtain the following estimate
\begin{align}
\left|\left\langle\int_{I}U(-s)f(s)ds,\, g\right\rangle\right|&=\left|\int_{\SR}\int_{I}U(-s)f(s)ds\overline{g}dx\right|\nonumber\\
&=\left|\int_{\SR}\int_{I}\langle D\rangle^{-(1-2\epsilon)}f(s)\overline{\langle D\rangle^{1-2\epsilon} U(s)g}ds dx\right|\nonumber\\
&\leq \|\langle D\rangle^{1-2\epsilon} U(s)g\|_{L_{x}^{\infty}L_{T}^{\frac{2}{1-\epsilon_{1}}}}\|\langle D\rangle^{-(1-2\epsilon)} f\|_{L_{x}^{1}L_{T}^{\frac{2}{1+\epsilon_{1}}}}\nonumber\\
&\leq CT^{\frac{4\epsilon^{2}}{3+2\epsilon}}\|g\|_{L_{x}^{2}}\|\langle D\rangle^{-(1-2\epsilon)}f\|_{L_{x}^{1}L_{T}^{\frac{2}{1+\epsilon_{1}}}}.\label{2.020}
\end{align}
This proves \eqref{2.014}.

\noindent By using Christ-Kiselev lemma \cite[Theorem 1.2]{CK2001} and \eqref{2.014}, we have that \eqref{2.013} is valid.

\noindent In a similar way, to prove \eqref{2.015}, we only need to prove
\begin{eqnarray}
&&\left\|\langle D\rangle^{(1-2\epsilon)} U(t)\int_{I} U(-s)f(s)ds\right\|_{L_{x}^{\infty}L_{T}^{\frac{2}{1-\epsilon_{1}}}}\leq CT^{\frac{8\epsilon^{2}}{3+2\epsilon}}\|\langle D\rangle^{-(1-2\epsilon)}f\|_{L_{x}^{1}L_{T}^{\frac{2}{1+\epsilon_{1}}}}.\label{2.021}
\end{eqnarray}
From \eqref{2.019} and \eqref{2.014}, we obtain
\begin{align}
\text{The left hand side of  \eqref{2.021}}&\leq CT^{\frac{4\epsilon^{2}}{3+2\epsilon}}\left\|\int_{I} U(-s)f(s)ds\right\|_{L_{x}^{2}}\nonumber\\
&\leq CT^{\frac{8\epsilon^{2}}{3+2\epsilon}}\|\langle D\rangle^{-(1-2\epsilon)}f\|_{L_{x}^{1}L_{T}^{\frac{2}{1+\epsilon_{1}}}}.\label{2.022}
\end{align}
This yields \eqref{2.015}.

\noindent Similarly, to prove \eqref{2.016}, we only need to prove
\begin{eqnarray}
\left\|\langle D\rangle^{-\frac{k-2}{2k}-\epsilon_{2}} U(t)\int_{I} U(-s)f(s)ds\right\|_{L_{x}^{k}L_{T}^{\frac{k}{\epsilon_{1}}}}\leq CT^{\frac{4\epsilon^{2}}{3+2\epsilon}+\frac{\epsilon_{1}}{k}}\|\langle D\rangle^{-(1-2\epsilon)}f\|_{L_{x}^{1}L_{T}^{\frac{2}{1+\epsilon_{1}}}}.\label{2.023}
\end{eqnarray}
Combining \eqref{2.07} and \eqref{2.014} gives
\begin{align}
\text{The left hand side of  \eqref{2.023}}&\leq CT^{\frac{\epsilon_{1}}{k}}\left\|\int_{I} U(-s)f(s)ds\right\|_{L_{x}^{2}}\nonumber\\
&\leq CT^{\frac{4\epsilon^{2}}{3+2\epsilon}+\frac{\epsilon_{1}}{k}}\|\langle D\rangle^{-(1-2\epsilon)}f\|_{L_{x}^{1}L_{T}^{\frac{2}{1+\epsilon_{1}}}},\label{2.024}
\end{align}
which establishes \eqref{2.016}.

The proof of Lemma 2.2 is completed.

\begin{Lemma}\label{lem2.3}
Suppose that $a,\,k\in \mathbf{N}$,\, $4\leq p\leq\infty$. Then, we have
\begin{eqnarray}
&&\left\|D_{x}^{\frac{a}{2}(1-\frac{4}{p})-\frac{1}{p}}U(t)\psi_{k}(D)f\right\|_{L_{x}^{p}L_{t}^{\infty}}\leq C\|\psi_{k}(D)f\|_{L^{2}},\label{2.025}
\end{eqnarray}
where $\psi_{k}(D)f=\mathscr{F}_{\xi}^{-1}(\psi_{k}(\xi)\mathscr{F}_{x}f(\xi))$.
\end{Lemma}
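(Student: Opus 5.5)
We interpolate between the two endpoints $p=4$ and $p=\infty$, exploiting the fact that $\psi_{k}(D)f$ has Fourier support in $2Q_{k}$. Apart from the finitely many low-frequency pieces in $O_{1}$, this is an interval of length $\sim N^{-a}$ sitting inside $\{|\xi|\sim N\}$. At $p=4$ the exponent is $\frac{a}{2}(1-\frac{4}{p})-\frac{1}{p}=-\frac14$, and the claim is exactly the maximal function estimate \eqref{2.02} of Lemma 2.1 applied to $\psi_{k}(D)f$, with no use of the localization. At $p=\infty$ the exponent is $\frac{a}{2}$, so we need
\begin{eqnarray*}
\|D^{\frac{a}{2}}U(t)\psi_{k}(D)f\|_{L_{x}^{\infty}L_{t}^{\infty}}\leq C\|\psi_{k}(D)f\|_{L^{2}}.
\end{eqnarray*}
This should follow from the Fourier inversion formula and Cauchy--Schwarz on the support of $\psi_{k}$:
\begin{eqnarray*}
|D^{\frac{a}{2}}U(t)\psi_{k}(D)f(x)|\leq C\int_{2Q_{k}}|\xi|^{\frac{a}{2}}|\mathscr{F}_{x}(\psi_{k}(D)f)(\xi)|d\xi\leq C N^{\frac{a}{2}}\,{\rm mes}(2Q_{k})^{\frac12}\|\psi_{k}(D)f\|_{L^{2}}.
\end{eqnarray*}
Since ${\rm mes}(2Q_{k})^{1/2}\sim N^{-a/2}$, the factors of $N$ cancel, giving a constant independent of $N$ and $k$. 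For the pieces supported near $O_{1}$, $|\xi|^{a/2}\leq C$ on a bounded set, so the bound holds trivially.

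To pass to intermediate $p$, we cannot apply complex interpolation directly to the fixed operator $f\mapsto D^{\alpha(p)}U(t)\psi_{k}(D)f$, because the power of $D$ changes with $p$. Two routes are available. The first is to use the analytic family $T_{z}=D^{z}U(t)\psi_{k}(D)$ with Stein's interpolation theorem in the mixed-norm setting of \cite[Theorem 1]{BP1961}. Here the imaginary powers $D^{i\gamma}$ are harmless: on the $L^{2}$ side we simply absorb them into $f$, since $\|D^{i\gamma}f\|_{L^2}=\|f\|_{L^{2}}$.

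The second route, which is cleaner, uses dyadic localization. Since $\psi_{k}(D)f$ is supported where $|\xi|\sim N$, we may replace $D^{\alpha}$ by $N^{\alpha}\tilde{P}_{N}$, where $\tilde{P}_{N}$ is a smooth multiplier equal to $(|\xi|/N)^{\alpha}$ on the annulus. This multiplier is bounded uniformly on $L^{4}_{x}L^{\infty}_{t}$ and on $L^{\infty}_{xt}$ because it commutes with $U(t)$, and alternatively one can build the power directly into the endpoint estimates. We thus obtain $\|U(t)\psi_{k}(D)f\|_{L^{4}_{x}L^{\infty}_{t}}\leq CN^{1/4}\|\psi_kf\|_{L^2}$ and $\|U(t)\psi_{k}(D)f\|_{L^{\infty}}\leq CN^{-a/2}\|\psi_kf\|_{L^2}$.

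We then apply Riesz--Thorin (or Hölder in $x$) to the fixed operator $f\mapsto U(t)\psi_{k}(D)f$ on $L^{2}$ functions with Fourier support in $2Q_{k}$, which can be realized by composing with a fixed projection. With $\theta=4/p$ this gives the bound $N^{\frac{\theta}{4}-\frac{a}{2}(1-\theta)}=N^{-\alpha(p)}$. Multiplying by $N^{\alpha(p)}$ yields \eqref{2.025}.

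The main obstacle is justifying the removal of $D^{\alpha}$ uniformly in $N$, that is, ensuring that the multiplier $(|\xi|/N)^{\alpha}\chi(\xi/N)$ is bounded on $L^{p}_{x}L^{\infty}_{t}$. This holds because its kernel is an $L^{1}$ function in $x$ that commutes with $U(t)$, so Minkowski's inequality gives boundedness on any $L^{p}_{x}L^{\infty}_{t}$.
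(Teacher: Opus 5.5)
Your proposal is correct. Your first route is essentially the paper's proof. The paper proves the same two endpoints but puts the derivative on the $L^{2}$ side: $\|U(t)\psi_{k}(D)f\|_{L_{x}^{4}L_{t}^{\infty}}\leq C\|D^{\frac14}\psi_{k}(D)f\|_{L^{2}}$ and $\|U(t)\psi_{k}(D)f\|_{L_{xt}^{\infty}}\leq CN^{-\frac a2}\|\psi_{k}(D)f\|_{L^{2}}\leq C\|D^{-\frac a2}\psi_{k}(D)f\|_{L^{2}}$. It then interpolates these via \cite[Theorem 1]{BP1961} and moves the derivative back, with $\alpha(p)=\frac a2(1-\frac4p)-\frac1p$. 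Because the weight on the $L^{2}$ side changes between the endpoints, the step really needs a Stein--Weiss or analytic-family interpolation, which your route~1 makes explicit.

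Your second route is genuinely different and more elementary. Once $D^{\alpha}$ is traded for $N^{\alpha}$ times a multiplier whose kernel has $L^{1}_{x}$ norm bounded uniformly in $N$, no interpolation theorem is needed. (That kernel bound, not commutation with $U(t)$, is what gives boundedness on $L_{x}^{p}L_{t}^{\infty}$.) The inequality $\|G\|_{L^{p}}\leq\|G\|_{L^{4}}^{4/p}\|G\|_{L^{\infty}}^{1-4/p}$, applied to $G(x)=\sup_{t}|U(t)\psi_{k}(D)f(x)|$, already produces the factor $N^{-\alpha(p)}$.

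The price is that you need the two-sided localization $|\xi|\sim N$. The paper's argument only uses $|\xi|\lesssim N$ (to get $N^{-a/2}\lesssim|\xi|^{-a/2}$), so it treats the single low-frequency piece $O_{1}$ together with the others. In route~2 you only handle $O_{1}$ at $p=\infty$. For intermediate $p$ the exponent $\alpha(p)$ may be negative (for $p$ near $4$, or $a=0$), so there $D^{\alpha}$ cannot be replaced by $N^{\alpha}$ times a bounded multiplier.

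This is easily patched: apply the same H\"older inequality directly to $D^{\alpha}U(t)\psi_{k}(D)f$.
\begin{itemize}
\item Bound the $L_{x}^{4}L_{t}^{\infty}$ factor by \eqref{2.02}, since $|\xi|^{\alpha+\frac14}\leq C$ on $\{|\xi|\leq2\}$ (as $\alpha\geq-\frac1p\geq-\frac14$).
\item Bound the $L^{\infty}$ factor by Cauchy--Schwarz, since $|\xi|^{\alpha}\in L^{2}(\{|\xi|\leq2\})$ (as $2\alpha>-1$).
\end{itemize}
Alternatively, your route~1 already covers this piece.
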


\noindent{\bf Proof.}
From \eqref{2.02}, we have
\begin{eqnarray}
&&\left\|D_{x}^{-\frac{1}{4}}U(t)f\right\|_{L_{x}^{4}L_{t}^{\infty}}\leq C\|f\|_{L^{2}}.\label{2.026}
\end{eqnarray}
On the one hand, by using \eqref{2.026}, we have
\begin{eqnarray}
&&\left\|D_{x}^{-\frac{1}{4}}U(t)\psi_{k}(D)f\right\|_{L_{x}^{4}L_{t}^{\infty}}\leq C\|\psi_{k}(D)f\|_{L^{2}}.\label{2.027}
\end{eqnarray}
On the other hand, by using Hausdorff-Young's inequality, H\"older inequality and the definition of $\mathcal{Q}$, notice that ${\rm{mes}}(\supp \psi_{k})\sim N^{-a}$ and for $\xi\in \supp\psi_{k}$,  we have
$\langle|\xi|\rangle\sim N$ and
\begin{align}
\left\|U(t)\psi_{k}(D)f\right\|_{L_{t}^{\infty}L_{x}^{\infty}}&\leq C\left\|e^{it\xi^{3}}\psi_{k}(\xi)\mathscr{F}_{x}f(\xi)\right\|_{L_{t}^{\infty}L_{\xi}^{1}}\nonumber\\
&\leq ({\rm{mes}}(\supp \psi_{k}))^{\frac{1}{2}}\|\psi_{k}(\xi)\mathscr{F}_{x}f(\xi)\|_{L_{\xi}^{2}}\leq CN^{-\frac{a}{2}}\|\psi_{k}(D)f\|_{L^{2}}\nonumber\\
&\leq C\left\|\langle D\rangle^{-\frac{a}{2}}\psi_{k}(D)f\right\|_{L^{2}}\leq C\left\|D^{-\frac{a}{2}}\psi_{k}(D)f\right\|_{L^{2}}.\label{2.028}
\end{align}
By applying the interpolation theorem in mixed Lebesgue spaces \cite[Theorem 1]{BP1961} to \eqref{2.027} and \eqref{2.028}, we have
\begin{eqnarray}
&&\|U(t)\psi_{k}(D)f\|_{L_{x}^{p}L_{t}^{\infty}}\leq C\|D_{x}^{s}\psi_{k}(D)f\|_{L^{2}},\label{2.029}
\end{eqnarray}
where
\begin{eqnarray}
&&\frac{1}{p}=\frac{\theta}{4}+\frac{1-\theta}{\infty},\, s=\frac{\theta}{4}-\frac{a(1-\theta)}{2}=\frac{1}{p}-\frac{a}{2}(1-\frac{4}{p}).\label{2.030}
\end{eqnarray}

From \eqref{2.029} and \eqref{2.030}, we obtain \eqref{2.025}.

The proof of Lemma 2.3 is completed.

\begin{Lemma}\label{lem2.4}
Suppose that $\{g_ {k}(\omega)\}_{k\in \mathbf{N}}$ is independent, zero mean, real valued random variable sequence with probability distribution $\mu_{k}(k\in\mathbf{N})$  on probability space $(\Omega,\mathcal{A}, \mathbb{P})$, where $\mu_{k}(k\in\mathbf{N})$ satisfy \eqref{1.02}. Then,
for $r\in[2, \infty)$ and $(a_k)_k\in\ell^2$, we have
\begin{eqnarray}
&&\left\|\sum_{k\in\mathbf{N}}g_{k}(\omega)a_{k}\right\|_{L_{\omega}^{r}}\leq C\sqrt{r}\|a_{k}\|_{\ell^{2}}.\label{2.031}
\end{eqnarray}
\end{Lemma}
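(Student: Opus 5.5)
The plan is the classical Khintchine-type argument: first obtain a sub-Gaussian bound on the moment generating function of $S(\omega)=\sum_{k}g_{k}(\omega)a_{k}$, then convert it into a Gaussian tail bound and integrate. By monotone convergence and Fatou's lemma it suffices to treat finite sums $S_{M}=\sum_{k\leq M}g_{k}a_{k}$ with a constant independent of $M$. The convergence of the full series in $L_{\omega}^{2}$ follows from orthogonality of the independent, zero-mean $g_{k}$, which have uniformly bounded variances by \eqref{1.02}.

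\textbf{Step 1 (exponential moment).} For any $t\in\R$, independence gives
\begin{eqnarray*}
&&\int_{\Omega}e^{tS_{M}}d\mathbb{P}=\prod_{k\leq M}\int_{\R}e^{ta_{k}x}d\mu_{k}(x).
\end{eqnarray*}
Apply \eqref{1.02} with $\gamma_{k}=ta_{k}$. This bounds the product by
\begin{eqnarray*}
&&e^{ct^{2}\sum_{k}a_{k}^{2}}=e^{ct^{2}\|a\|_{\ell^{2}}^{2}}.
\end{eqnarray*}

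\textbf{Step 2 (tail bound).} Chebyshev's inequality gives, for $\lambda>0$,
\begin{eqnarray*}
&&\mathbb{P}(S_{M}>\lambda)\leq e^{-t\lambda+ct^{2}\|a\|_{\ell^{2}}^{2}}.
\end{eqnarray*}
Optimizing with $t=\lambda/(2c\|a\|_{\ell^{2}}^{2})$ yields $e^{-\lambda^{2}/(4c\|a\|_{\ell^{2}}^{2})}$. The same bound holds for $-S_{M}$, since \eqref{1.02} is valid for all real $\gamma_{k}$. Hence
\begin{eqnarray*}
&&\mathbb{P}(|S_{M}|>\lambda)\leq 2e^{-\lambda^{2}/(4c\|a\|_{\ell^{2}}^{2})}.
\end{eqnarray*}

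\textbf{Step 3 (moments).} Use the layer-cake formula:
\begin{eqnarray*}
&&\|S_{M}\|_{L_{\omega}^{r}}^{r}=r\int_{0}^{\infty}\lambda^{r-1}\mathbb{P}(|S_{M}|>\lambda)d\lambda\leq 2r\int_{0}^{\infty}\lambda^{r-1}e^{-\lambda^{2}/(4c\|a\|_{\ell^{2}}^{2})}d\lambda.
\end{eqnarray*}
The substitution $\lambda=2\sqrt{c}\|a\|_{\ell^{2}}\sqrt{u}$ turns the right side into $r(4c)^{r/2}\|a\|_{\ell^{2}}^{r}\Gamma(r/2)$. Taking the $r$-th root, Stirling's bound $\Gamma(r/2)^{1/r}\leq C\sqrt{r}$ together with $r^{1/r}\leq C$ gives
\begin{eqnarray*}
&&\|S_{M}\|_{L_{\omega}^{r}}\leq C\sqrt{r}\|a\|_{\ell^{2}},
\end{eqnarray*}
with $C$ depending only on $c$. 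Letting $M\to\infty$ along a subsequence converging almost surely, Fatou's lemma gives \eqref{2.031}.

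I do not expect a genuine obstacle here. The only points needing care are the uniformity of the Stirling estimate in $r\in[2,\infty)$ and the passage from finite to infinite sums, both of which are routine.
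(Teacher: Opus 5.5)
Your proof is correct and follows essentially the same route as the source the paper cites in place of a proof, Burq and Tzvetkov's Lemma 3.1: an exponential moment bound from independence and \eqref{1.02}, a Chernoff tail estimate $\mathbb{P}(|S|>\lambda)\leq 2e^{-\lambda^{2}/(4c\|a\|_{\ell^{2}}^{2})}$, and layer-cake integration, with your truncation-plus-Fatou step adding a little extra care. One small point: Step 1 needs $\gamma_{k}=ta_{k}$ to be real, whereas the paper's applications use complex coefficients (e.g. $a_{k}=D^{s_{0}}U(t)\psi_{k}(D)f(x)$), so apply your bound to the real and imaginary parts separately, which only changes $C$.
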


For Lemma 2.4,  we refer to \cite[Lemma 3.1]{BT2008I}.

\begin{Lemma}\label{lem2.5}
Suppose that $g$ is a real-valued measurable function on a probability space $(\Omega,\mathcal{A}, \mathbb{P})$ and there exist constants $C > 0,\, M> 0$ and $p\geq1$ such that for any $p_{1}\geq p$, the following inequality holds:
\begin{eqnarray*}
&&\|g\|_{L_{\omega}^{p_{1}}}\leq C\sqrt{p_{1}}M.
\end{eqnarray*}
Then, for any $\lambda> 0$, we have
\begin{eqnarray*}
&&\mathbb{P}(\{\omega\in \Omega : |g(\omega)| > \lambda\}) \leq C_{1}\exp\left(-c\frac{\lambda^{2}}{M^{2}}\right),
\end{eqnarray*}
where $c > 0$ and $C_{1}>0$ depends only on $C$ and $p$.
\end{Lemma}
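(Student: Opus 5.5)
The plan is to derive a subgaussian tail from moment growth by the standard Chebyshev–optimization argument. First, for every $p_{1}\geq p$, Chebyshev's inequality gives
\begin{eqnarray*}
&&\mathbb{P}(\{\omega:|g(\omega)|>\lambda\})\leq \lambda^{-p_{1}}\|g\|_{L_{\omega}^{p_{1}}}^{p_{1}}\leq \left(\frac{C\sqrt{p_{1}}M}{\lambda}\right)^{p_{1}}.
\end{eqnarray*}
We then optimize in $p_{1}$. Choose
\begin{eqnarray*}
&&p_{1}=\frac{\lambda^{2}}{C^{2}e^{2}M^{2}},
\end{eqnarray*}
so that $C\sqrt{p_{1}}M/\lambda=e^{-1}$. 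This yields
\begin{eqnarray*}
&&\mathbb{P}(\{|g|>\lambda\})\leq e^{-p_{1}}=\exp\left(-\frac{\lambda^{2}}{C^{2}e^{2}M^{2}}\right),
\end{eqnarray*}
which is the desired bound with $c=(Ce)^{-2}$ and $C_{1}=1$.

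This choice is admissible only if $p_{1}\geq p$, i.e.\ $\lambda\geq CeM\sqrt{p}$. In the complementary regime $\lambda<CeM\sqrt{p}$ we simply use the trivial bound $\mathbb{P}\leq 1$. Since there $\lambda^{2}/M^{2}<C^{2}e^{2}p$, we have
\begin{eqnarray*}
&&1\leq \exp\left(c\,C^{2}e^{2}p\right)\exp\left(-c\frac{\lambda^{2}}{M^{2}}\right)=e^{p}\exp\left(-c\frac{\lambda^{2}}{M^{2}}\right)
\end{eqnarray*}
with the same $c=(Ce)^{-2}$.

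Taking $C_{1}=e^{p}$ covers both regimes simultaneously, and $C_{1}$ and $c$ depend only on $C$ and $p$, as claimed. There is no real obstacle here. The only point requiring care is the second regime: the restriction $p_{1}\geq p$ prevents the optimization for small $\lambda$, and this is absorbed precisely by enlarging the prefactor to $C_{1}=e^{p}$.
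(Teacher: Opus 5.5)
Your proof is correct: Chebyshev's inequality with $p_{1}=\lambda^{2}/(CeM)^{2}$ when $\lambda\geq CeM\sqrt{p}$, and the trivial bound absorbed into $C_{1}=e^{p}$ when $\lambda<CeM\sqrt{p}$, is the standard argument. The paper gives no proof of its own and only cites \cite[Lemma 2.9]{SSW2021}, whose proof is this same Chebyshev bound optimized in $p_{1}$.
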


For Lemma 2.5,  we refer to \cite[Lemma 2.9]{SSW2021}.

\begin{Lemma}\label{lem2.6}
Let $f\in H^{s}$. Its randomization $f^{\omega}(x)$ is defined by \eqref{1.03}. For any $\lambda>0$, there exist constants $C_{1}, c>0$,  such that
\begin{eqnarray*}
&&\mathbb{P}(\{\omega\in \Omega : \|f^{\omega}\|_{H^{s}} > \lambda\}) \leq C_{1}\exp\left(-c\frac{\lambda^{2}}{\|f\|_{H^{s}}^{2}}\right).
\end{eqnarray*}
\end{Lemma}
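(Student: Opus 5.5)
The plan is to combine the large deviation estimate of Lemma 2.4 with the tail bound of Lemma 2.5. The random variable $g(\omega)=\|f^{\omega}\|_{H^{s}}$ will be handled by first controlling its $L_{\omega}^{p_{1}}$ norms for every $p_{1}\geq2$.

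\textbf{Step 1: reduce to a pointwise estimate.} Fix $p_{1}\geq2$. By Minkowski's inequality (since $p_{1}\geq2$),
\begin{eqnarray*}
\|f^{\omega}\|_{L_{\omega}^{p_{1}}H^{s}}=\left\|\,\|\langle\xi\rangle^{s}\mathscr{F}_{x}f^{\omega}(\xi)\|_{L_{\xi}^{2}}\right\|_{L_{\omega}^{p_{1}}}
\leq \left\|\,\|\langle\xi\rangle^{s}\mathscr{F}_{x}f^{\omega}(\xi)\|_{L_{\omega}^{p_{1}}}\right\|_{L_{\xi}^{2}}.
\end{eqnarray*}

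\textbf{Step 2: apply Lemma 2.4 at each frequency.} For fixed $\xi$, we have $\mathscr{F}_{x}f^{\omega}(\xi)=\sum_{k}g_{k}(\omega)\psi_{k}(\xi)\mathscr{F}_{x}f(\xi)$. This is a sum of the type in Lemma 2.4, with coefficients $a_{k}=\psi_{k}(\xi)\mathscr{F}_{x}f(\xi)$. If these are complex, split them into real and imaginary parts. Lemma 2.4 then gives
\begin{eqnarray*}
\|\mathscr{F}_{x}f^{\omega}(\xi)\|_{L_{\omega}^{p_{1}}}\leq C\sqrt{p_{1}}\Big(\sum_{k}|\psi_{k}(\xi)|^{2}\Big)^{1/2}|\mathscr{F}_{x}f(\xi)|.
\end{eqnarray*}

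\textbf{Step 3: bounded overlap.} The one point needing care is that $\sum_{k}|\psi_{k}(\xi)|^{2}\leq C$ uniformly in $\xi$. This holds because each $\psi_{k}$ is supported in $2Q_{k}$, and the dilated intervals $2Q_{k}$ have bounded overlap. Intervals of length $N^{-a}$ in $Q_{N}$ only meet dilates of neighbouring intervals of comparable length, at scales $N/2$, $N$ or $2N$. We also take the $\psi_{k}$ uniformly bounded, which is implicit in the construction. Inserting this into Step 1 yields
\begin{eqnarray*}
\|f^{\omega}\|_{L_{\omega}^{p_{1}}H^{s}}\leq C\sqrt{p_{1}}\|f\|_{H^{s}},\quad p_{1}\geq2.
\end{eqnarray*}

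\textbf{Step 4: conclude.} Apply Lemma 2.5 with $g(\omega)=\|f^{\omega}\|_{H^{s}}$, $M=\|f\|_{H^{s}}$ and $p=2$. This gives the claimed bound $C_{1}\exp(-c\lambda^{2}/\|f\|_{H^{s}}^{2})$.

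The main (mild) obstacle is the uniform bound in Step 3. Everything else is a direct chaining of earlier lemmas.
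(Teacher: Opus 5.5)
Your argument is correct and is essentially the proof the paper intends. The paper only cites \cite[Lemma 2.6]{SSW2021} and the argument of \cite[Lemma 2.2]{BOP}, which is exactly your chain: Minkowski in $\omega$, the Khintchine-type bound of Lemma 2.4, the bounded overlap of the $\psi_{k}$ (so that $\sum_{k}|\psi_{k}(\xi)|^{2}\leq C$), and then the tail estimate of Lemma 2.5. The only cosmetic difference is that you apply Lemma 2.4 pointwise in $\xi$ on the Fourier side, whereas that argument applies it pointwise in $x$ and then uses Plancherel; both reduce to the same overlap bound, and your version has the small bonus that $\mathscr{F}_{x}f^{\omega}(\xi)$ is a finite sum for each $\xi$.
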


By using \cite[Lemma 2.6]{SSW2021} and a proof similar to \cite[Lemma 2.2]{BOP},  we have that Lemma 2.6 holds.

\begin{Proposition}\label{pro2.1}
Suppose that $a\in\mathbf{N}$,\, $4<p\leq q<\infty$,\, $2\leq r<\infty$,\, $p_{1}\geq\max\{q,\, r\}$ and $p=q(1-\frac{2}{r})$,\, $s_{0}=\frac{2}{r}-\frac{1}{q}+\frac{a}{2q}(q(1-\frac{2}{r})-4)$,\,
$f^{\omega}(x)$ is the randomization of $f(x)$ and is defined by \eqref{1.03}. Then, for $f\in L^{2}(\R)$, we have
\begin{eqnarray}
&&\left\|D^{s_{0}}U(t)f^{\omega}\right\|_{L_{\omega}^{p_{1}}L_{x}^{q}L_{t}^{r}}\leq C\sqrt{p_{1}}\|f\|_{L^{2}}.\label{2.032}
\end{eqnarray}
Moreover, for any $\lambda> 0$, we have
\begin{eqnarray}
&&\mathbb{P}(\{\omega\in \Omega : \left\|D^{s_{0}}U(t)f^{\omega}\right\|_{L_{x}^{q}L_{t}^{r}} > \lambda\}) \leq C_{1}\exp\left(-c\frac{\lambda^{2}}{\|f\|_{L^{2}}^{2}}\right).\label{2.033}
\end{eqnarray}
where $c > 0$ and $C_{1}>0$ depends only on $C$ and $p_{1}$.

\end{Proposition}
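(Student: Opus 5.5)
The plan is the standard randomization argument: first obtain a deterministic estimate for a single frequency-localized piece $U(t)\psi_{k}(D)f$ with the gain $s_{0}$, then combine the pieces with Minkowski's inequality and the large deviation bound of Lemma 2.4, and finally derive \eqref{2.033} from \eqref{2.032} using Lemma 2.5.

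\textbf{Step 1 (deterministic estimate for one piece).} We interpolate, using \cite[Theorem 1]{BP1961}, between two estimates. The first is the local smoothing estimate \eqref{2.01}, $\|DU(t)\psi_{k}(D)f\|_{L_{x}^{\infty}L_{t}^{2}}\le C\|\psi_{k}(D)f\|_{L^{2}}$. The second is the randomized maximal estimate \eqref{2.025} of Lemma 2.3 at the exponent $p$, namely $\|D^{\frac{a}{2}(1-\frac4p)-\frac1p}U(t)\psi_{k}(D)f\|_{L_{x}^{p}L_{t}^{\infty}}\le C\|\psi_{k}(D)f\|_{L^{2}}$. We take the interpolation parameter $\theta=2/r$ on the $L_{x}^{\infty}L_{t}^{2}$ side. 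This gives $L_{t}^{r}$ in time and $\frac1q=\frac{1-\theta}{p}$ in space, which is exactly the relation $p=q(1-\frac2r)$. The resulting derivative exponent is
\[
\tfrac{2}{r}+\Big(1-\tfrac2r\Big)\Big(\tfrac a2\big(1-\tfrac4p\big)-\tfrac1p\Big)
=\tfrac2r-\tfrac1q+\tfrac{a}{2q}\Big(q\big(1-\tfrac2r\big)-4\Big)=s_{0},
\]
where we used $(1-\frac2r)/p=1/q$. Since $D^{s}$ acting on a piece whose frequencies are localized to $|\xi|\sim N$ is comparable to $N^{s}$, the interpolation of the weights is legitimate. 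Alternatively, one can interpolate the operator $D^{s}U(t)\psi_k(D)$ with an analytic family in $s$, or apply Hölder-type interpolation on $\|\cdot\|_{L_x^qL_t^r}$ directly, using $\|F\|_{L^q_xL^r_t}\le\|F\|_{L^\infty_xL^2_t}^{\theta}\|F\|_{L^p_xL^\infty_t}^{1-\theta}$ (log-convexity of mixed norms). The conclusion is
\[
\|D^{s_{0}}U(t)\psi_{k}(D)f\|_{L_{x}^{q}L_{t}^{r}}\le C\|\psi_{k}(D)f\|_{L^{2}},
\]
with $C$ uniform in $k$.

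\textbf{Step 2 (randomization).} Since $p_{1}\ge\max\{q,r\}$, Minkowski's inequality lets us move $L_{\omega}^{p_{1}}$ inside, so that
\[
\|D^{s_{0}}U(t)f^{\omega}\|_{L_{\omega}^{p_{1}}L_{x}^{q}L_{t}^{r}}\le \big\|\|\textstyle\sum_{k}g_{k}(\omega)D^{s_{0}}U(t)\psi_{k}(D)f\|_{L_{\omega}^{p_{1}}}\big\|_{L_{x}^{q}L_{t}^{r}}.
\]
Lemma 2.4 bounds the inner norm pointwise in $(x,t)$ by $C\sqrt{p_{1}}\big(\sum_{k}|D^{s_{0}}U(t)\psi_{k}(D)f|^{2}\big)^{1/2}$. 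Because $q,r\ge2$, Minkowski's inequality again gives
\[
\Big\|\big(\textstyle\sum_{k}|\cdot|^{2}\big)^{1/2}\Big\|_{L_{x}^{q}L_{t}^{r}}\le\Big(\textstyle\sum_{k}\|\cdot\|_{L_{x}^{q}L_{t}^{r}}^{2}\Big)^{1/2}.
\]
We then apply Step 1 to each piece and use the almost orthogonality $\sum_{k}\|\psi_{k}(D)f\|_{L^{2}}^{2}\le C\|f\|_{L^{2}}^{2}$, which holds because the $2Q_{k}$ have bounded overlap. This proves \eqref{2.032}.

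\textbf{Step 3 (tail bound).} Estimate \eqref{2.032} holds for every $p_{1}\ge\max\{q,r\}$, so Lemma 2.5, applied with $g(\omega)=\|D^{s_{0}}U(t)f^{\omega}\|_{L_{x}^{q}L_{t}^{r}}$ and $M=\|f\|_{L^{2}}$, yields \eqref{2.033}.

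\textbf{Main obstacle.} The delicate step is Step 1: the interpolation must be done between two mixed-norm estimates that carry different derivative weights. The cleanest route is the pointwise log-convexity bound for mixed norms, applied to the single piece with frequencies at scale $N$. On that piece both weights reduce to powers of $N$, and the case $N\lesssim1$, i.e.\ the piece supported in $O_1$, needs the same care that Lemma 2.3 already takes.
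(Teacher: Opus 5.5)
Your proposal is correct and is essentially the paper's proof. The paper interpolates \eqref{2.01} with \eqref{2.025} exactly as you do (your $\theta=2/r$ is its $1-\theta$), moves $L^{p_1}_\omega$ inside by Minkowski, applies Lemma 2.4 and Minkowski again to pull the $\ell^2$ sum outside, and concludes with Lemma 2.5; the only difference is the order (it randomizes first and interpolates second).

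On the subtlety you flag: the frequency-localized log-convexity route cannot handle the $O_1$ piece. There $s_0$ is a nontrivial convex combination of $1$ and $\alpha=\frac a2(1-\frac4p)-\frac1p$, so whenever $\alpha\neq1$ one of $|\xi|^{s_0-1}$, $|\xi|^{s_0-\alpha}$ is unbounded near $\xi=0$. For that piece you should use the analytic family $D^{z}$, which works because $D^{i\gamma}$ is unitary on $L^2$ and commutes with $U(t)$ and $\psi_k(D)$. The paper also glosses over this point, simply citing \cite{BP1961}.
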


\noindent{\bf Proof.}
Notice that $p_{1}\geq\max\{q,\, r\}\geq2$, by using Minkowski inequality and Lemma 2.4, we have
\begin{align}
\left\|D^{s_{0}}U(t)f^{\omega}\right\|_{L_{\omega}^{p_{1}}L_{x}^{q}L_{t}^{r}}&\leq \left\|\|D^{s_{0}}U(t)f^{\omega}\|_{L_{\omega}^{p_{1}}}\right\|_{L_{x}^{q}L_{t}^{r}}\nonumber\\
&\leq C\sqrt{p_{1}}\left\|\|D^{s_{0}}U(t)\psi_{k}(D)f\|_{\ell^{2}}\right\|_{L_{x}^{q}L_{t}^{r}}\nonumber\\
&\leq C\sqrt{p_{1}}\left\|\|D^{s_{0}}U(t)\psi_{k}(D)f\|_{L_{x}^{q}L_{t}^{r}}\right\|_{\ell^{2}}.\label{2.034}
\end{align}
On the one hand, by using \eqref{2.01}, we have
\begin{eqnarray}
&&\|DU(t)\psi_{k}(D)f\|_{L_{x}^{\infty}L_{t}^{2}}\leq C\|\psi_{k}(D)f\|_{L^{2}}.\label{2.035}
\end{eqnarray}
On the other hand, by using \eqref{2.025}, we have
\begin{eqnarray}
&&\left\|D^{\frac{a}{2}(1-\frac{4}{p})-\frac{1}{p}}U(t)\psi_{k}(D)f\right\|_{L_{x}^{p}L_{t}^{\infty}}\leq C\|\psi_{k}(D)f\|_{L^{2}},\label{2.036}
\end{eqnarray}
By applying the interpolation theorem in mixed Lebesgue spaces \cite[Theorem 1]{BP1961} to \eqref{2.035} and \eqref{2.036}, we have
\begin{eqnarray}
&&\left\|D^{s_{0}}U(t)\psi_{k}(D)f\right\|_{L_{x}^{q}L_{t}^{r}}\leq C\|\psi_{k}(D)f\|_{L^{2}},\label{2.037}
\end{eqnarray}
where
\begin{eqnarray*}
&&\frac{1}{q}=\frac{\theta}{p}+\frac{1-\theta}{\infty},\quad \frac{1}{r}=\frac{\theta}{\infty}+\frac{1-\theta}{2},\\ &&s_{0}=\left(\frac{a}{2}(1-\frac{4}{p})-\frac{1}{p}\right)\theta+1-\theta=\frac{2}{r}-\frac{1}{q}+\frac{a(p-4)}{2q}.
\end{eqnarray*}
By using \eqref{2.034} and \eqref{2.037}, we have
\begin{eqnarray}
&&\left\|D^{s_{0}}U(t)f^{\omega}\right\|_{L_{\omega}^{p_{1}}L_{x}^{q}L_{t}^{r}}\leq C\sqrt{p_{1}}\left\|\|\psi_{k}(D)f\|_{L^{2}}\right\|_{\ell^{2}}\leq C\sqrt{p_{1}}\|f\|_{L^{2}}.\label{2.038}
\end{eqnarray}
By using Lemma 2.5 and \eqref{2.038}, we have that \eqref{2.033} is valid.

The proof of Proposition 2.1 is completed.

\begin{Lemma}\label{lem2.7}
Suppose that  $s\in\R$,\, $\delta>0$,\, $T\in(0,1)$,\, $-\frac{1}{2}<b^{\prime}\leq0\leq b\leq b^{\prime}+1$ and $\eta(x)\in C^{\infty}(\R)$ such that $\supp \eta\subset[0,1]$. Then, we have
\begin{eqnarray}
&&\left\|\eta\left(\frac{t}{T}\right)U(t)f\right\|_{X^{s,\frac{1}{2}+\delta}(\SR\times\SR)}\leq T^{-\delta}\|f\|_{H^{s}(\SR)},\label{2.039}\\
&&\left\|\eta\left(\frac{t}{T}\right)\int_{0}^{t}U(t-\tau)\eta\left(\frac{t}{T}\right)F(\tau)d\tau\right\|_{X^{s,b}}\leq CT^{1+b^{\prime}-b}\|F\|_{X^{s,b^{\prime}}(\SR\times\SR)}.\label{2.040}
\end{eqnarray}
\end{Lemma}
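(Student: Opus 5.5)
The plan is to reduce both estimates to statements on the Fourier side: the modulation weight $\langle\tau-\xi^{3}\rangle$ acts only in time, and the spatial weight $\langle\xi\rangle^{s}$ commutes with $U(t)$ and with the Duhamel operator. For \eqref{2.039} we write $\eta(t/T)U(t)f$ as $U(t)$ applied to $\eta(t/T)f$. Its space-time Fourier transform is
\[
\mathscr{F}\bigl(\eta(t/T)U(t)f\bigr)(\tau,\xi)=T\hat\eta\bigl(T(\tau-\xi^{3})\bigr)\mathscr{F}_{x}f(\xi).
\]
Hence
\[
\left\|\eta\left(\frac{t}{T}\right)U(t)f\right\|_{X^{s,\frac12+\delta}}^{2}=\|f\|_{H^{s}}^{2}\int_{\SR}\langle\lambda\rangle^{1+2\delta}T^{2}|\hat\eta(T\lambda)|^{2}d\lambda .
\]
The substitution $\mu=T\lambda$ and the bound $\langle\mu/T\rangle\le T^{-1}\langle\mu\rangle$ (valid since $T<1$) turn the last integral into at most $T^{-2\delta}\int\langle\mu\rangle^{1+2\delta}|\hat\eta(\mu)|^{2}d\mu$. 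This quantity is finite because $\eta$ is smooth with compact support, so $\hat\eta$ is Schwartz. This gives \eqref{2.039}, with a harmless constant (normalizing $\eta$ as usual).

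For \eqref{2.040} we conjugate by the group. Put $v(t)=U(-t)F(t)$, so that $\mathscr{F}v(\tau,\xi)=\mathscr{F}F(\tau+\xi^{3},\xi)$ and $\|F\|_{X^{s,b}}=\|\langle\xi\rangle^{s}v\|_{H^{b}_{t}L^{2}_{\xi}}$. The claim then becomes the one-dimensional time estimate
\[
\left\|\eta\left(\frac{t}{T}\right)\int_{0}^{t}\eta\left(\frac{\tau}{T}\right)g(\tau)d\tau\right\|_{H_{t}^{b}}\le CT^{1+b'-b}\|g\|_{H_{t}^{b'}},
\]
applied for each fixed $\xi$ and then integrated in $\xi$ against $\langle\xi\rangle^{2s}$. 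This is the standard lemma of Ginibre--Tsutsumi--Velo and Kenig--Ponce--Vega. We plan to prove it by writing
\[
\int_{0}^{t}h=\int\frac{e^{it\lambda}-1}{i\lambda}\hat h(\lambda)d\lambda
\]
with $h=\eta(\cdot/T)g$, and splitting into $|\lambda|T\le1$ and $|\lambda|T>1$. On the low piece we Taylor-expand $e^{it\lambda}-1$, which gives terms $t^{n}\eta(t/T)$ whose $H^{b}$ norms are $\lesssim T^{n+1/2-b}$, multiplied by $\int_{|\lambda|\le1/T}|\lambda|^{n-1}|\hat h|$. The high piece splits into $\eta(t/T)\,\mathscr{F}^{-1}\bigl(\chi_{|\lambda|>1/T}\hat h/\lambda\bigr)$ and $\eta(t/T)\int_{|\lambda|>1/T}\hat h/\lambda$, handled by the same two ingredients.

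We would then bound these pieces using Cauchy--Schwarz in $\lambda$ and the multiplier fact that multiplication by $\eta(t/T)$ is bounded on $H^{b'}$ for $-\frac12<b'\le0$ uniformly in $T$ (with a gain $T^{\frac12-b}$-type factor where needed). The factors of $T$ come from $\int_{|\lambda|\le 1/T}\langle\lambda\rangle^{-2b'}|\lambda|^{2n-2}d\lambda$ and the analogous high-frequency integrals, and the condition $b\le b'+1$ makes the powers combine to exactly $T^{1+b'-b}$.

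The main obstacle is this bookkeeping of $T$-powers in the time estimate, especially the high-frequency term, where one must use the uniform boundedness of multiplication by $\eta(t/T)$ on $H^{b'}$ and $H^{b}$ for $|b|,|b'|<\frac12$ (respectively $b\le b'+1$). Since this is classical, we would ultimately cite it, e.g.\ Ginibre's lemma / \cite{KPV1993}-type references, rather than reprove every case.
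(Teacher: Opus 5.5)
Your argument is correct and follows the standard route. The paper gives no proof of Lemma 2.7 and only refers to the literature, where the proof is exactly your Fourier computation for \eqref{2.039} (which, as you note, produces a constant depending on $\eta$ that the printed statement omits), together with the reduction of \eqref{2.040}, via $v=U(-t)F$, to the one-dimensional Ginibre--Tsutsumi--Velo time estimate split into $|\lambda|T\le 1$ and $|\lambda|T>1$.

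One correction to your sketch: when $b>\frac12$ (as with $b=\frac12+\delta$ in Section 4), multiplication by $\eta(t/T)$ is not uniformly bounded on $H^{b}_{t}$. So for the term $\eta(t/T)\mathscr{F}^{-1}\bigl(\chi_{|\lambda|>1/T}\hat h/\lambda\bigr)$ you should use $\|\eta(t/T)w\|_{H^{b}}\lesssim\|w\|_{H^{b}}+T^{\frac12-b}\|\hat w\|_{L^{1}}$. Combined with $\int_{|\lambda|>1/T}|\hat h|/|\lambda|\,d\lambda\lesssim T^{\frac12+b'}\|h\|_{H^{b'}}$ (this is where $b'>-\frac12$ enters) and with $b\le b'+1$ for the $\|w\|_{H^{b}}$ part, this gives exactly $T^{1+b'-b}$.
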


For the proof of Lemma 2.7,  we refer to \cite{B1993, G2002}.

\begin{Lemma}\label{lem2.8}
Suppose that $b>\frac{1}{2}$, $4\leq q_{1}\leq\infty$,\, $2\leq q_{2}\leq\infty$,\, $2\leq r\leq\infty$,\, $\frac{2}{q_{1}}+\frac{1}{r}=\frac{1}{2}$,\, $\alpha=\frac{2}{r}-\frac{1}{q_{1}}$. Then, we have
\begin{eqnarray}
&&\|D^{\alpha}u\|_{L_{x}^{q_{1}}L_{t}^{r}(\SR\times\SR)}\leq C\|u\|_{X^{0,b}(\SR\times\SR)},\label{2.041}\\
&&\|D^{1-\frac{2}{q_{2}}}u\|_{L_{x}^{q_{2}}L_{t}^{2}(\SR\times\SR)}\leq C\|u\|_{X^{0,b(1-\frac{2}{q_{2}})}(\SR\times\SR)}.\label{2.042}
\end{eqnarray}
\end{Lemma}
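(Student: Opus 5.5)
The plan is to transfer the free-solution estimates of Lemma 2.1 to $X^{0,b}$ functions via the standard transference principle, and to obtain \eqref{2.042} by interpolation.

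\textbf{Step 1 (representation).} For $u\in X^{0,b}$, write $\tilde u(\tau,\xi)=\mathscr{F}u(\tau,\xi)$ and change variables $\lambda=\tau-\xi^{3}$:
\begin{eqnarray*}
&&u(t,x)=C\int_{\SR}e^{it\lambda}\big(U(t)F_{\lambda}\big)(x)\,d\lambda,\qquad \mathscr{F}_{x}F_{\lambda}(\xi)=\tilde u(\lambda+\xi^{3},\xi).
\end{eqnarray*}
Since $D^{\alpha}$ commutes with this superposition and $|e^{it\lambda}|=1$, Minkowski's inequality in the mixed norm $L_{x}^{q_{1}}L_{t}^{r}$ (valid because $q_{1},r\geq1$) gives
\begin{eqnarray*}
&&\|D^{\alpha}u\|_{L_{x}^{q_{1}}L_{t}^{r}}\leq C\int_{\SR}\|D^{\alpha}U(t)F_{\lambda}\|_{L_{x}^{q_{1}}L_{t}^{r}}\,d\lambda.
\end{eqnarray*}

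\textbf{Step 2 (proof of \eqref{2.041}).} Apply \eqref{2.03} to each $F_{\lambda}$. The endpoint cases not literally covered by \eqref{2.03} are $q_{1}=4,\ r=\infty$, which is \eqref{2.02}, and $q_{1}=\infty,\ r=2$, which is \eqref{2.01}. This bounds the right-hand side above by
\begin{eqnarray*}
&&C\int\|F_{\lambda}\|_{L^{2}}\,d\lambda\leq C\Big(\int\langle\lambda\rangle^{-2b}d\lambda\Big)^{\frac12}\Big(\int\langle\lambda\rangle^{2b}\|F_{\lambda}\|_{L^{2}}^{2}d\lambda\Big)^{\frac12}.
\end{eqnarray*}
The first factor is finite because $b>\frac12$. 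The second factor equals $\|u\|_{X^{0,b}}$ by Plancherel and the change of variables. This proves \eqref{2.041}.

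\textbf{Step 3 (proof of \eqref{2.042}).} The transference argument cannot give the weight $b(1-\frac2{q_{2}})$, which can fall below $\frac12$, so I would interpolate between two endpoints:
\begin{itemize}
\item $q_{2}=\infty$: this is \eqref{2.041} with $q_{1}=\infty$, $r=2$, $\alpha=1$, giving $\|Du\|_{L_{x}^{\infty}L_{t}^{2}}\leq C\|u\|_{X^{0,b}}$.
\item $q_{2}=2$: the trivial identity $\|u\|_{L_{x}^{2}L_{t}^{2}}=\|u\|_{X^{0,0}}$.
\end{itemize}
Interpolate the operator $u\mapsto D^{\theta}u$ along the analytic family $D^{z}$ with $\theta=1-\frac2{q_{2}}$. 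Use complex interpolation of the weighted $L^{2}$ spaces, $[X^{0,0},X^{0,b}]_{\theta}=X^{0,\theta b}$, together with the mixed Lebesgue interpolation of \cite[Theorem 1]{BP1961}, $[L_{x}^{2}L_{t}^{2},L_{x}^{\infty}L_{t}^{2}]_{\theta}=L_{x}^{q_{2}}L_{t}^{2}$. Stein's analytic interpolation then gives \eqref{2.042}.

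\textbf{Main obstacle.} The hardest step is justifying the analytic family $D^{i y}$ in Step 3, since the imaginary powers must have at most polynomial growth in $y$ on the endpoint spaces. On $L^{2}$-based spaces $D^{iy}$ is an isometry. At the $L_{x}^{\infty}L_{t}^{2}$ endpoint I would sidestep the issue: $D^{iy}$ commutes with $U(t)$, so in Step 2 one applies \eqref{2.01} to $D^{iy}F_{\lambda}$, whose $L^{2}$ norm equals $\|F_{\lambda}\|_{L^{2}}$. This yields the $q_{2}=\infty$ endpoint bound uniformly in $y$.
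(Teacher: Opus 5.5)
Your proposal is correct and follows essentially the same route as the paper: you transfer the free Airy estimates to $X^{0,b}$ with $b>\frac12$ to get \eqref{2.041}, then interpolate between $\|u\|_{L^{2}_{xt}}=\|u\|_{X^{0,0}}$ and $\|Du\|_{L^{\infty}_{x}L^{2}_{t}}\leq C\|u\|_{X^{0,b}}$ to get \eqref{2.042}. Your version is more careful than the paper's, which cites only \eqref{2.02}--\eqref{2.03} (omitting the $q_{1}=\infty$ endpoint \eqref{2.01} that it later relies on) and does not address the analytic family $D^{z}$ needed to interpolate the derivative.
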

\noindent{\bf Proof.}
By using the principle of translation invariance of \eqref{2.02}, \eqref{2.03}, we obtain \eqref{2.041}. Next, we apply the interpolation theorem in mixed Lebesgue spaces
\cite[Theorem 1]{BP1961} to the estimates
\begin{eqnarray*}
&&\|u\|_{L_{xt}^{2}}\leq C\|u\|_{X^{0,0}}
\end{eqnarray*}
and
\begin{eqnarray*}
&&\|Du\|_{L_{x}^{\infty}L_{t}^{2}(\SR\times\SR)}\leq C\|u\|_{X^{0,b}(\SR\times\SR)},
\end{eqnarray*}
which yields \eqref{2.042}.

The proof of Lemma 2.8 is completed.

\begin{Lemma}\label{lem2.9}
Suppose that $p, q\geq1$,\, $\varphi(\xi)\in C^{\infty}(\R)$ and $\supp\varphi(\xi)\subset [\frac{1}{4},1]$,\, $\varphi_{j}(\xi)=\varphi(\frac{\xi}{2^{j+1}})$,\,
$\mathscr{F}_{x}\Delta_{j}f=\varphi_{j}(\xi)\mathscr{F}_{x}f(\xi)$.  Then, we have
\begin{eqnarray*}
&&\|\Delta_{j}f\|_{L_{x}^{q}L_{t}^{p}}\leq C\|f\|_{L_{x}^{q}L_{t}^{p}}.
\end{eqnarray*}
\end{Lemma}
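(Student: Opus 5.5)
We will show that $\Delta_j$ acts on $L_x^qL_t^p$ as convolution in $x$ alone with an $L^1(\R)$ kernel, and then use Minkowski's inequality twice. Since the symbol $\varphi_j(\xi)=\varphi(\xi/2^{j+1})$ depends only on $\xi$, for each fixed $t$ we can write
\begin{eqnarray*}
&&(\Delta_{j}f)(t,x)=\int_{\SR}K_{j}(x-y)f(t,y)\,dy,\qquad K_{j}(x)=\frac{1}{\sqrt{2\pi}}\mathscr{F}_{\xi}^{-1}\varphi_{j}(x)=2^{j+1}K(2^{j+1}x),
\end{eqnarray*}
where $K=\frac{1}{\sqrt{2\pi}}\mathscr{F}_{\xi}^{-1}\varphi$. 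The inverse Fourier transform commutes with dilations, which gives the scaling form of $K_j$.

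First we check that the kernels are uniformly integrable. Because $\varphi\in C^{\infty}$ has compact support in $[\frac14,1]$, it is a Schwartz function. Hence $K$ is also Schwartz, so $K\in L^{1}(\R)$. The dilation $x\mapsto 2^{j+1}x$ with the prefactor $2^{j+1}$ preserves the $L^1$ norm, so $\|K_j\|_{L^1}=\|K\|_{L^1}=:C$ for all $j$.

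Next we estimate the mixed norm. For fixed $x$, Minkowski's integral inequality in $t$ gives
\begin{eqnarray*}
&&\|\Delta_{j}f(\cdot,x)\|_{L_{t}^{p}}\leq\int_{\SR}|K_{j}(x-y)|\,\|f(\cdot,y)\|_{L_{t}^{p}}\,dy=(|K_{j}|*F)(x),\qquad F(y):=\|f(\cdot,y)\|_{L_{t}^{p}}.
\end{eqnarray*}
This step is valid because $p\geq1$. Taking the $L_x^q$ norm and applying Young's convolution inequality, which needs $q\geq1$, we get
\begin{eqnarray*}
&&\|\Delta_jf\|_{L_x^qL_t^p}\leq\|K_j\|_{L^1}\|F\|_{L^q_x}=C\|f\|_{L_x^qL_t^p}.
\end{eqnarray*}
The constant does not depend on $j$. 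The same argument covers $p=\infty$ or $q=\infty$, and it also applies when $t$ is restricted to $[0,T]$.

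The only point needing care is justifying the convolution representation for general $f$ in the mixed space. We would first take $f$ Schwartz, or continuous with compact support in $t$ and rapidly decaying in $x$, and then pass to general $f$ by density. For $p$ or $q$ equal to $\infty$ we would instead take the convolution formula as the definition of $\Delta_j$. The uniformity in $j$ comes entirely from the scaling of the kernel, so there is no real obstacle.
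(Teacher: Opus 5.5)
Your proof is correct: write $\Delta_j$ as convolution in $x$ with the uniformly $L^1$-bounded dilate $2^{j+1}K(2^{j+1}\cdot)$, apply Minkowski's integral inequality in $t$, then Young's inequality in $x$. The paper does not prove Lemma 2.9 itself (it cites \cite{YWY2025}), but this is the same Minkowski-then-Young scheme it uses for the closely related Lemmas 2.11 and 2.12, so your route is essentially the paper's.
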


For the proof of Lemma 2.9,  we refer to \cite[Lemma 2.15]{YWY2025}.

\begin{Lemma}\label{lem2.10}
Suppose that $f\in L^{p}$,\, $g\in L_{t}^{p_{1}}L_{x}^{q_{1}}$, $1<p,\,q,\,q_{1},\,p_{1}<\infty$.  Then, we have
\begin{eqnarray*}
&&A_{1}\|f\|_{L^{p}}\leq \left\|\|P_{N}f\|_{\ell_{N}^{2}}\right\|_{L^{p}}\leq A_{2}\|f\|_{L^{p}},\\
&&\left\|\|P_{N}f\|_{L_{t}^{p_{1}}L_{x}^{q_{1}}}\right\|_{\ell_{N}^{r}}\leq C\|f\|_{L_{t}^{p_{1}}L_{x}^{q_{1}}},
\end{eqnarray*}
where $A_{1},\, A_{2}>0$,\, $r\geq\max\{p_{1},\, q_{1},\,2\}$.
\end{Lemma}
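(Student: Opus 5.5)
The first statement is the classical Littlewood--Paley square function theorem on $L^{p}(\R)$, $1<p<\infty$. I would quote it from the standard harmonic-analysis literature rather than reprove it. If a proof is wanted, the route is as follows. Randomize the dyadic pieces with Rademacher signs $\varepsilon_{N}$. The multiplier $\sum_{N}\varepsilon_{N}\phi(\xi/N)$ satisfies the Mikhlin--H\"ormander condition uniformly in the choice of signs, so
\[
\Big\|\sum_{N}\varepsilon_{N}P_{N}f\Big\|_{L^{p}}\leq C\|f\|_{L^{p}}.
\]
Averaging over the signs and applying Khintchine's inequality gives the upper bound $A_{2}$. For the lower bound, choose a second family $\tilde\phi$ with $\tilde\phi\phi=\phi$ and use the duality pairing
\[
\langle f,g\rangle=\sum_{N}\langle P_{N}f,\tilde P_{N}g\rangle ,
\]
which is valid up to the low-frequency piece, itself handled directly. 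Cauchy--Schwarz in $N$, H\"older in $x$, and the upper bound for $g$ in $L^{p'}$ then give $A_{1}$.

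For the second estimate, with mixed norm $L_{t}^{p_{1}}L_{x}^{q_{1}}$ and $r\geq\max\{p_{1},q_{1},2\}$, I would argue by Minkowski's inequality, moving the $\ell^{r}_{N}$ norm inside. Since $r\geq p_{1}$, Minkowski's integral inequality in $t$ gives
\[
\Big\|\|P_{N}f\|_{L_{t}^{p_{1}}L_{x}^{q_{1}}}\Big\|_{\ell_{N}^{r}}\leq\Big\|\big\|\|P_{N}f\|_{L_{x}^{q_{1}}}\big\|_{\ell_{N}^{r}}\Big\|_{L_{t}^{p_{1}}}.
\]
Since $r\geq q_{1}$, the same reasoning in $x$ gives
\[
\|P_{N}f(t)\|_{\ell_{N}^{r}L_{x}^{q_{1}}}\leq\big\|\|P_{N}f(t)\|_{\ell_{N}^{r}}\big\|_{L_{x}^{q_{1}}}.
\]
Since $r\geq2$, we have $\|\cdot\|_{\ell^{r}}\leq\|\cdot\|_{\ell^{2}}$ pointwise in $(t,x)$. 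The first statement, applied with exponent $q_{1}$ for each fixed $t$, then bounds $\|\|P_{N}f(t)\|_{\ell^{2}_{N}}\|_{L_{x}^{q_{1}}}$ by $A_{2}\|f(t)\|_{L_{x}^{q_{1}}}$. Taking the $L_{t}^{p_{1}}$ norm completes the chain.

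The only delicate point is the exact form of the Minkowski step: exchanging $\ell^{r}$ with $L^{q}$ requires $r\geq q$, and this is precisely where the hypothesis on $r$ is used. The rest is standard. No obstacle is expected beyond correctly citing the vector-valued and Mikhlin multiplier theorems for the first part.
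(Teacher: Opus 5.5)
\textbf{Gap in the lower bound.} Your upper bound (random signs, a Mikhlin bound for $\sum_N\varepsilon_N\phi(\xi/N)$ uniform in the signs, then Khintchine) is correct. So is your Minkowski chain for the mixed-norm estimate: $\ell^r_N L^{p_1}_t\le L^{p_1}_t\ell^r_N$ and $\ell^r_N L^{q_1}_x\le L^{q_1}_x\ell^r_N$ for $r\ge p_1,q_1$, then $\ell^2\subset\ell^r$, then the square-function upper bound in $L^{q_1}_x$ for each fixed $t$. The paper gives no proof of its own; it only cites Stein's Littlewood--Paley theorem and Lemma 2.9 of Yan et al., and your argument is the standard one behind those citations.

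The step that fails is your claim that the identity $\langle f,g\rangle=\sum_N\langle P_Nf,\tilde P_Ng\rangle$ holds ``up to the low-frequency piece, itself handled directly.'' Nothing in $\|(\sum_N|P_Nf|^2)^{1/2}\|_{L^p}$ controls frequencies that the family $\{\phi(\cdot/N)\}_N$ does not cover. With the paper's literal conventions ($N\in2^{\mathbf N}$, $\supp\phi\subset(\frac12,2]$), take $f\in\mathcal S$ with $\widehat f$ supported in $[-\frac14,\frac14]$, or in $[-4,-2]$, since this $\phi$ also kills negative frequencies. Then $P_Nf=0$ for every $N$ while $\|f\|_{L^p}>0$, so the left inequality is false and no separate treatment can rescue it.

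Your duality argument works exactly when $\sum_N\phi(\xi/N)=1$ for a.e.\ $\xi\in\mathbb{R}$. There are two ways to arrange this:
\begin{itemize}
\item Take the homogeneous decomposition $N\in2^{\mathbb Z}$ with $\phi$ supported in $\{\frac12<|\xi|\le2\}$. Then there is no low-frequency remainder at all, because $\{\xi=0\}$ is null; prove the identity for Schwartz $f,g$ and extend by density.
\item Put the low-frequency projection $P_{\le1}$ into the square function as one of the pieces. Then it is just one more term in the same Cauchy--Schwarz/H\"older argument, paired with a fattened $\tilde P_{\le1}$ whose symbol equals $1$ on the support of the cutoff.
\end{itemize}
You should say which interpretation you adopt and drop the claim that an uncovered low-frequency piece can be handled directly. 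This does not affect your proof of the second estimate, which uses only the upper bound $A_2$.
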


For the proof of Lemma 2.10,  we refer to \cite[Lemma 2.9]{YYDH2020} and of in \cite[P104, Theorem 5]{S1970}.

\begin{Lemma}\label{lem2.11}(Bernstein inequality in mixed Lebesgue spaces.)
Let $s\in\R$, $d\geq1$, $1\leq r\leq p\leq p_{1}\leq\infty$ and $1\leq q\leq \infty$. Suppose $\varphi(\xi)\in C_{c}^{\infty}(\R^{d})$ is a function supported in the annulus $\{\xi\in\R^{d}:\frac{1}{4}\leq|\xi|\leq1\}$. Then, we have that $\lambda(\xi)=|\xi|^{s}\varphi(\xi)\in C_{c}^{\infty}(\R^{d})$ and $\supp\lambda(\xi)\subset\{\xi\in\R^{d}:\frac{1}{4}\leq|\xi|\leq 1\}$. For $N\in 2^{\mathbf{N}}$, we define the scaled function $\lambda_{N}(\xi)=\left|\frac{\xi}{N}\right|^{s}\varphi\left(\frac{\xi}{N}\right)$ and its inverse Fourier transform
$K_{N}(x)=\check{\lambda}_{N}(x)=N^{d}\check{\lambda}(Nx)$.  Then, we have
\begin{eqnarray}
&&C^{-1}N^{s}N^{d\left(\frac{1}{p_{1}}-\frac{1}{p}\right)}\|P_{N}f\|_{L_{x}^{p_{1}}L_{t}^{q}}\leq\|D^{s}P_{N}f\|_{L_{x}^{p}L_{t}^{q}}\nonumber\\
&&\leq CN^{s}N^{d\left(\frac{1}{r}-\frac{1}{p}\right)}\|P_{N}f\|_{L_{x}^{r}L_{t}^{q}},\label{2.043}
\end{eqnarray}
where
\begin{eqnarray*}
&&P_{N}f\overset{\triangle}{=}\int_{\SR^{d}}e^{ix\cdot\xi}\varphi_{N}(\xi)\hat{f}(\xi,t)d\xi,\\
&&D^{s}P_{N}f=\int_{\SR^{d}}e^{ix\cdot\xi}\varphi_{N}(\xi)|\xi|^{s}\hat{f}(\xi,t)d\xi\\
&&=N^{s}\int_{\SR^{d}}e^{ix\cdot\xi}\lambda_{N}(\xi)\hat{f}(\xi,t)d\xi\\
&&=N^{s}\int_{\SR^{d}}K_{N}(x-y)f(y,t)dy=N^{s}(K_{N}\ast f(\cdot,t))(x).
\end{eqnarray*}
\end{Lemma}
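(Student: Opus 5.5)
The key point is that $D^{s}P_{N}f$ is a convolution in $x$ alone, $D^{s}P_{N}f=N^{s}K_{N}\ast f(\cdot,t)$, with a kernel that does not depend on $t$. So both inequalities in \eqref{2.043} should follow from Young's inequality in mixed norms, applied after taking the inner $L_t^q$ norm.

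\textbf{Upper bound.} For fixed $x$, Minkowski's integral inequality in $t$ gives
\begin{eqnarray*}
\|D^{s}P_{N}f(x,\cdot)\|_{L_{t}^{q}}\leq N^{s}\int_{\SR^{d}}|K_{N}(x-y)|\,\|P_{N}f(y,\cdot)\|_{L_{t}^{q}}\,dy.
\end{eqnarray*}
Here I replace $f$ by $\tilde P_{N}f$, where $\tilde P_N$ is a fattened projector with $\tilde\varphi=1$ on $\supp\varphi$, so that $D^sP_N f=D^sP_N\tilde P_N f$. Young's inequality in $x$ with $1+\frac1p=\frac1m+\frac1r$ then bounds the right side by $N^{s}\|K_{N}\|_{L^{m}}\|P_{N}f\|_{L_{x}^{r}L_{t}^{q}}$. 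Scaling gives
\begin{eqnarray*}
\|K_{N}\|_{L^{m}}=N^{d(1-\frac1m)}\|\check\lambda\|_{L^{m}}=N^{d(\frac1r-\frac1p)}\|\check\lambda\|_{L^{m}},
\end{eqnarray*}
and $\|\check\lambda\|_{L^m}<\infty$ because $\lambda\in C_c^\infty$: the origin is excluded from the support, so $|\xi|^s$ is smooth there, and $\check\lambda$ is Schwartz. This proves the right-hand inequality, with the norm of $P_N f$ read as that of the fattened localization; this is the standard convention.

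\textbf{Lower bound.} The lower bound should be obtained by inverting $D^{s}$ on the annulus. Write $P_{N}f=N^{-s}\tilde K_{N}\ast D^{s}P_{N}f$, where $\tilde K_{N}$ is the inverse Fourier transform of $|\xi/N|^{-s}\tilde\varphi(\xi/N)$; this symbol is also smooth and compactly supported away from $0$. Applying the same Minkowski-plus-Young argument from $L_x^{p}$ to $L_x^{p_1}$ (valid since $p\le p_1$) gives
\begin{eqnarray*}
\|P_{N}f\|_{L_{x}^{p_{1}}L_{t}^{q}}\leq CN^{-s}N^{d(\frac1p-\frac1{p_1})}\|D^{s}P_{N}f\|_{L_{x}^{p}L_{t}^{q}}.
\end{eqnarray*}
Rearranging yields the left-hand inequality.

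\textbf{Main obstacle.} The only delicate point is justifying the reproducing identities. This needs a bump $\tilde\varphi$ equal to $1$ on $\supp\varphi$, and I have to check that the stated support hypothesis (annulus $[\frac14,1]$) leaves room for it. If necessary I would enlarge the annulus slightly, which changes only the constants. The rest is routine: the mixed-norm Young inequality, and integrability of Schwartz kernels to justify swapping the $t$ and $y$ integrals.
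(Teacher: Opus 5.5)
Your route is the paper's route. You use Minkowski in $t$ to pass to $y\mapsto\|\,\cdot\,(y,\cdot)\|_{L^{q}_{t}}$, Young's inequality in $x$, and the scaling $\|K_N\|_{L^m}=N^{d(1-\frac1m)}\|\check\lambda\|_{L^m}$. For the lower bound you invert $D^{s}$ on the annulus, where the paper reruns its upper bound with $-s$ and $f$ replaced by $D^{s}f$. Your fattened cutoff $\tilde\varphi$ with $\tilde\varphi\varphi=\varphi$ is a real improvement. The paper's argument relies on $P_N^2f=P_Nf$, which is false for every nonzero smooth $\varphi$. Your identity $P_Nf=N^{-s}\tilde K_N\ast D^{s}P_Nf$ is correct and gives the left inequality exactly as stated.

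The right inequality, however, is not proved as stated. You put $\tilde P_N$ on the input and keep the original kernel $K_N$, so Minkowski and Young only give
\[
\|D^{s}P_Nf\|_{L^{p}_{x}L^{q}_{t}}\le CN^{s}N^{d(\frac1r-\frac1p)}\|\tilde P_Nf\|_{L^{r}_{x}L^{q}_{t}}.
\]
This is weaker than the claim: $\|P_Nf\|_{L^{r}_{x}L^{q}_{t}}\le C\|\tilde P_Nf\|_{L^{r}_{x}L^{q}_{t}}$, but not conversely. Reading $\|P_Nf\|$ as $\|\tilde P_Nf\|$ is not a harmless convention here. In Section 3 the lemma is applied to functions that are already localized, such as $v_i=P_{N_i}v$ or $D^{3s_1}\langle D\rangle^{\sigma}v_{k+1}$, and the norm of that very function must appear on the right.

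The fix is the device you already use for the lower bound: put the fattened cutoff in the symbol, not on the input. Since $\tilde\varphi\varphi=\varphi$,
\[
D^{s}P_Nf=N^{s}\,\tilde K^{(s)}_N\ast P_Nf,
\]
where $\tilde K^{(s)}_N$ is the inverse Fourier transform of $|\xi/N|^{s}\tilde\varphi(\xi/N)$, up to the normalizing constant. This is your lower-bound identity applied to $P_Nf$ with exponent $s$, rather than to $D^{s}P_Nf$ with exponent $-s$. Minkowski, Young and scaling applied to it give the right inequality with $\|P_Nf\|_{L^{r}_{x}L^{q}_{t}}$ itself.

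Finally, your worry about room for $\tilde\varphi$ is unfounded. $\tilde\varphi$ need not be supported in $\{\frac14\le|\xi|\le1\}$. Any smooth bump equal to $1$ on $\supp\varphi$ and supported in, say, $\{\frac18\le|\xi|\le2\}$ keeps $|\xi|^{\pm s}\tilde\varphi$ smooth with compact support away from the origin, which is all the argument needs.
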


\noindent{\bf Proof.}
For the right-hand side of \eqref{2.043}, we define
\begin{align}
F(x)&=\|D^{s}P_{N}f\|_{L_{t}^{q}}=N^{s}\left(\int_{\SR}|(K_{N}\ast f)|^{q}dt\right)^{\frac{1}{q}}\nonumber\\
&=N^{s}\left(\int_{\SR}\left|\int_{\SR^{d}}K_{N}(x-y) f(y,t)dy\right|^{q}dt\right)^{\frac{1}{q}}.\label{2.044}
\end{align}
Since $q\geq1$, by using  Minkowski inequality, it follows that
\begin{eqnarray}
&&F(x)\leq N^{s}\int_{\SR^{d}}|K_{N}(x-y)| \|f(y,t)\|_{L_{t}^{q}}dy.\label{2.045}
\end{eqnarray}
Define
\begin{eqnarray*}
&&G(x)\overset{\triangle}{=}\|f(x,t)\|_{L_{t}^{q}}.
\end{eqnarray*}
Then, we have
\begin{eqnarray}
&&F(x)\leq N^{s}(K_{N}\ast G)(x).\label{2.046}
\end{eqnarray}
By using Young's inequality and \eqref{2.046}, we have
\begin{align}
\|D^{s}P_{N}f\|_{L_{x}^{p}L_{t}^{q}}&=\|F\|_{L_{x}^{p}}\leq N^{s}\|(K_{N}\ast G)(x)\|_{L_{x}^{p}}\nonumber\\
&\leq CN^{s}\|K_{N}\|_{L_{x}^{\rho}}\|G\|_{L_{x}^{r}},\label{2.047}
\end{align}
where $\frac{1}{p}=\frac{1}{\rho}+\frac{1}{r}-1$.

\noindent Next, we estimate $\|K_{N}\|_{L_{x}^{\rho}}$,
\begin{align}
\|K_{N}\|_{L_{x}^{\rho}}&=\left(\int_{\SR^{d}}|N^{d}\check{\lambda}(Nx)|^{\rho}dx\right)^{\frac{1}{\rho}}\nonumber\\
&=N^{d}\left(\int_{\SR^{d}}|\check{\lambda}(Nx)|^{\rho}dx\right)^{\frac{1}{\rho}}.\label{2.048}
\end{align}
Under the change of variables $z=Nx$, we obtain
\begin{align}
\|K_{N}\|_{L_{x}^{\rho}}&=N^{d}\left(\int_{\SR^{d}}|\check{\lambda}(z)|^{\rho}N^{-d}dz\right)^{\frac{1}{\rho}}\nonumber\\
&=N^{d\left(1-\frac{1}{\rho}\right)}\|\check{\lambda}\|_{L_{z}^{\rho}}.\label{2.049}
\end{align}
Since $\check{\lambda}(x)\in\mathcal{S}$ is a rapidly decaying function, it follows that
\begin{eqnarray}
&&\|K_{N}\|_{L_{x}^{\rho}}\leq CN^{d\left(1-\frac{1}{\rho}\right)}.\label{2.050}
\end{eqnarray}
From \eqref{2.047}, \eqref{2.050}, and the definition of $G(x)$, we have
\begin{align}
\|D^{s}P_{N}f\|_{L_{x}^{p}L_{t}^{q}}&\leq CN^{s}N^{d(1-\frac{1}{\rho})}\|G\|_{L_{x}^{r}}=CN^{s}N^{d\left(\frac{1}{r}-\frac{1}{p}\right)}\|G\|_{L_{x}^{r}}\nonumber\\
&=CN^{s}N^{d\left(\frac{1}{r}-\frac{1}{p}\right)}\|f(x,t)\|_{L_{x}^{r}L_{t}^{q}}.\label{2.051}
\end{align}
Applying \eqref{2.051} to $f=P_{N}f$, and noting that $P_{N}^{2}f=P_{N}f$, we find that
\begin{eqnarray}
&&\|D^{s}P_{N}f\|_{L_{x}^{p}L_{t}^{q}}\leq CN^{s}N^{d\left(\frac{1}{r}-\frac{1}{p}\right)}\|P_{N}f(x,t)\|_{L_{x}^{r}L_{t}^{q}}.\label{2.052}
\end{eqnarray}
For the left-hand side of \eqref{2.043}, it suffices to prove that
\begin{eqnarray}
&&\|P_{N}f\|_{L_{x}^{p_{1}}L_{t}^{q}}\leq CN^{-s}N^{d\left(\frac{1}{p}-\frac{1}{p_{1}}\right)}\|D^{s}P_{N}f\|_{L_{x}^{p}L_{t}^{q}}.\label{2.053}
\end{eqnarray}
By using \eqref{2.052}, we have
\begin{eqnarray}
&&\|D^{-s}P_{N}f\|_{L_{x}^{p_{1}}L_{t}^{q}}\leq CN^{-s}N^{d\left(\frac{1}{p}-\frac{1}{p_{1}}\right)}\|P_{N}f(x,t)\|_{L_{x}^{p}L_{t}^{q}}.\label{2.054}
\end{eqnarray}
Applying \eqref{2.054} to $f=D^{s}f$, and noting that $D^{-s}P_{N}D^{s}f=P_{N}f$, we find that
\begin{eqnarray}
&&\|P_{N}f\|_{L_{x}^{p_{1}}L_{t}^{q}}\leq CN^{-s}N^{d\left(\frac{1}{p}-\frac{1}{p_{1}}\right)}\|D^{s}P_{N}f(x,t)\|_{L_{x}^{p}L_{t}^{q}}.\label{2.055}
\end{eqnarray}
From \eqref{2.055}, we have that \eqref{2.053} is valid. Combining \eqref{2.052} with \eqref{2.053}, we conclude that \eqref{2.043} holds.

The proof of Lemma 2.11 is completed.

\begin{Lemma}\label{lem2.12}
Let $s\in\R$, $1\leq r\leq p\leq\infty$ and $1\leq q\leq \infty$. Suppose $\varphi(\xi)\in C_{c}^{\infty}(\R)$ is a function supported in the annulus $\{\xi\in\R:\frac{1}{4}\leq|\xi|\leq 1\}$.  For $N\in 2^{\mathbf{N}}$, we define the scaled function $\varphi_{N}(\xi)=\varphi\left(\frac{\xi}{N}\right)$.  Then, we have
\begin{eqnarray}
&&\|\langle D\rangle^{s}P_{N}f\|_{L_{x}^{p}L_{t}^{q}}\leq C(1+N)^{s}N^{\frac{1}{r}-\frac{1}{p}}\|P_{N}f\|_{L_{x}^{r}L_{t}^{q}},\label{2.056}
\end{eqnarray}
where
\begin{eqnarray*}
&&P_{N}f\overset{\triangle}{=}\int_{\SR}e^{ix\xi}\varphi_{N}(\xi)\mathscr{F}_{x}f(\xi,t)d\xi,\quad H_{N}(x)\overset{\triangle}{=}\int_{\SR}e^{ix\xi}\varphi(\xi)\langle N\xi\rangle^{s}d\xi,\\
&&\langle D\rangle^{s}P_{N}f=N\int_{\SR}H_{N}(N(x-y))f(y,t)dy=N(H_{N}(N\cdot)\ast f(\cdot,t))(x).
\end{eqnarray*}
\end{Lemma}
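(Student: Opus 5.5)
The plan is to copy the proof of Lemma 2.11, replacing the homogeneous multiplier $|\xi|^{s}$ by the inhomogeneous symbol $\langle\xi\rangle^{s}$; this makes the kernel depend on $N$. First I would write $\langle D\rangle^{s}P_{N}f$ as a convolution. Substituting $\xi=N\eta$ gives
\begin{eqnarray*}
&&\langle D\rangle^{s}P_{N}f(x,t)=\int_{\SR}K_{N}(x-y)f(y,t)\,dy,\qquad K_{N}(x)=N H_{N}(Nx),
\end{eqnarray*}
with $H_{N}(z)=\int_{\SR}e^{iz\eta}\varphi(\eta)\langle N\eta\rangle^{s}d\eta$ as in the statement, up to the harmless factor $(2\pi)^{-1/2}$. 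Then I would set $F(x)=\|\langle D\rangle^{s}P_{N}f(x,\cdot)\|_{L_{t}^{q}}$ and $G(x)=\|f(x,\cdot)\|_{L_{t}^{q}}$. Minkowski's integral inequality (valid since $q\geq1$) gives $F\leq |K_{N}|\ast G$. Young's inequality with $\frac{1}{p}=\frac{1}{\rho}+\frac{1}{r}-1$ then gives
\begin{eqnarray*}
&&\|\langle D\rangle^{s}P_{N}f\|_{L_{x}^{p}L_{t}^{q}}\leq \|K_{N}\|_{L^{\rho}}\|f\|_{L_{x}^{r}L_{t}^{q}},\qquad \|K_{N}\|_{L^{\rho}}=N^{1-\frac{1}{\rho}}\|H_{N}\|_{L^{\rho}}=N^{\frac{1}{r}-\frac{1}{p}}\|H_{N}\|_{L^{\rho}}.
\end{eqnarray*}

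The main step, and the only new point compared with Lemma 2.11, is the uniform bound $\|H_{N}\|_{L^{\rho}}\leq C(1+N)^{s}$ for all $N\in2^{\mathbf{N}}$ and $\rho\in[1,\infty]$. I would set $m_{N}(\eta)=(1+N)^{-s}\varphi(\eta)\langle N\eta\rangle^{s}$, which is supported in $\frac14\leq|\eta|\leq1$. On that support $\langle N\eta\rangle\sim 1+N$. By induction, each derivative $\partial_{\eta}^{j}\langle N\eta\rangle^{s}$ is bounded by $C_{j}N^{j}\langle N\eta\rangle^{s-j}\leq C_{j}\langle N\eta\rangle^{s}$, because $N\leq C\langle N\eta\rangle$ when $|\eta|\geq\frac14$. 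So $\|\partial_{\eta}^{j}m_{N}\|_{L^{\infty}}\leq C_{j}$ uniformly in $N$, for $j=0,1,2$. Integrating by parts twice then yields $|\check m_{N}(z)|\leq C\langle z\rangle^{-2}$ uniformly in $N$. This gives $\|\check m_{N}\|_{L^{\rho}}\leq C$ for every $\rho\in[1,\infty]$, and since $H_{N}=(1+N)^{s}\check m_{N}$ up to a constant, the claimed bound on $\|H_{N}\|_{L^{\rho}}$ follows. This uniform symbol estimate is the step I expect to need the most care.

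Combining the two displays gives
$\|\langle D\rangle^{s}P_{N}f\|_{L_{x}^{p}L_{t}^{q}}\leq C(1+N)^{s}N^{\frac1r-\frac1p}\|f\|_{L_{x}^{r}L_{t}^{q}}$. To finish, I would apply this to $\tilde P_{N}f$ in place of $f$, where $\tilde P_{N}$ is a slightly fattened Littlewood--Paley projector with $\tilde P_{N}P_{N}=P_{N}$, and build $\tilde\varphi$ into the symbol. This replaces $\|f\|_{L_{x}^{r}L_{t}^{q}}$ by $\|P_{N}f\|_{L_{x}^{r}L_{t}^{q}}$, which is \eqref{2.056}. The paper's Lemma 2.11 makes the same step by writing $P_{N}^{2}=P_{N}$.
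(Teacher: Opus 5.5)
Your proposal is correct and follows the paper's proof essentially step by step. Both arguments use Minkowski in $t$, then Young's inequality with $\frac{1}{p}=\frac{1}{\rho}+\frac{1}{r}-1$, and then the $N$-uniform bound $\langle x\rangle^{2}|H_{N}(x)|\leq C(1+N)^{s}$, which comes from integrating by parts twice against $\varphi(\xi)\langle N\xi\rangle^{s}$ (its derivatives are $O((1+N)^{s})$ on the annulus). Your last step, applying the bound with a fattened projector $\tilde P_{N}$ satisfying $\tilde P_{N}P_{N}=P_{N}$, is actually the correct form of the paper's step; the paper instead asserts $P_{N}^{2}=P_{N}$, which does not literally hold for a smooth cutoff $\varphi$.
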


\noindent{\bf Proof.}
For \eqref{2.056}, we define
\begin{align}
F(x)&=\left\|\langle D\rangle^{s}P_{N}f\right\|_{L_{t}^{q}}=N\left(\int_{\SR}|(H_{N}(N\cdot)\ast f)|^{q}dt\right)^{\frac{1}{q}}\nonumber\\
&=N\left(\int_{\SR}\left|\int_{\SR}H_{N}(N(x-y)) f(y,t)dy\right|^{q}dt\right)^{\frac{1}{q}}.\label{2.057}
\end{align}
Since $q\geq1$, by using  Minkowski inequality, it follows that
\begin{eqnarray}
&&F(x)\leq N\int_{\SR}|H_{N}(N(x-y))| \|f(y,t)\|_{L_{t}^{q}}dy.\label{2.058}
\end{eqnarray}
Define
\begin{eqnarray*}
&&G(x)\overset{\triangle}{=}\|f(x,t)\|_{L_{t}^{q}}.
\end{eqnarray*}
Then, we have
\begin{eqnarray}
&&F(x)\leq N(H_{N}(N\cdot)\ast G)(x).\label{2.059}
\end{eqnarray}
By using Young's inequality and \eqref{2.059}, we have
\begin{align}
\left\|\langle D\rangle^{s}P_{N}f\right\|_{L_{x}^{p}L_{t}^{q}}&=\|F\|_{L_{x}^{p}}\leq N\|(H_{N}(N\cdot)\ast G)(x)\|_{L_{x}^{p}}\nonumber\\
&\leq CN\|H_{N}(Nx)\|_{L_{x}^{\rho}}\|G\|_{L_{x}^{r}},\label{2.060}
\end{align}
where $\frac{1}{p}=\frac{1}{\rho}+\frac{1}{r}-1$.

\noindent Next, we estimate $\|H_{N}(Nx)\|_{L_{x}^{\rho}}$. Noting that
\begin{eqnarray}
&&\langle x\rangle^{2}H_{N}(x)=\int_{\SR}(I-\partial_{\xi}^{2})\left(e^{ix\xi}\right)\varphi(\xi)\langle N\xi\rangle^{s}d\xi\nonumber\\
&&=\int_{\SR}e^{ix\xi}(I-\partial_{\xi}^{2})\left(\varphi(\xi)\langle N\xi\rangle^{s}\right)d\xi\nonumber\\
&&=\int_{\SR}e^{ix\xi}\left(\varphi(\xi)\langle N\xi\rangle^{s}\right)d\xi-\int_{\SR}e^{ix\xi}\partial_{\xi}^{2}\left(\varphi(\xi)\langle N\xi\rangle^{s}\right)d\xi\nonumber\\
&&=\int_{\SR}e^{ix\xi}\left(\varphi(\xi)-\partial_{\xi}^{2}\varphi(\xi)\right)\langle N\xi\rangle^{s}d\xi-2sN^{2}\int_{\SR}e^{ix\xi}(\partial_{\xi}\varphi(\xi))\xi\langle N\xi\rangle^{s-2}d\xi\nonumber\\
&&\quad-sN^{2}\int_{\SR}e^{ix\xi}\varphi(\xi)\langle N\xi\rangle^{s-2}d\xi-s(s-2)N^{4}\int_{\SR}e^{ix\xi}\varphi(\xi)\xi^{2}\langle N\xi\rangle^{s-4}d\xi.\label{2.061}
\end{eqnarray}
Since
\begin{eqnarray}
&&\left|\langle N\xi\rangle^{s}\right|+2|s|N^{2}\left|\xi\langle N\xi\rangle^{s-2}\right|+|s|N^{2}\left|\langle N\xi\rangle^{s-2}\right|+|s(s-2)|N^{4}\left|\xi^{2}\langle N\xi\rangle^{s-4}\right|\nonumber\\
&&\leq CN^{s}, N>1, \xi\in\supp\varphi(\xi),\label{2.062}\\
&&\left|\langle N\xi\rangle^{s}\right|+2|s|N^{2}\left|\xi\langle N\xi\rangle^{s-2}\right|+|s|N^{2}\left|\langle N\xi\rangle^{s-2}\right|+|s(s-2)|N^{4}\left|\xi^{2}\langle N\xi\rangle^{s-4}\right|\nonumber\\
&&\leq C,\quad N\leq1, \xi\in\supp\varphi(\xi),\label{2.063}
\end{eqnarray}
we have
\begin{eqnarray}
&&\langle x\rangle^{2}|H_{N}(x)|\leq C(1+N)^{s}.\label{2.064}
\end{eqnarray}
It follows from \eqref{2.064} that
\begin{eqnarray}
&&N\|H_{N}(Nx)\|_{L_{x}^{\rho}}\leq C(1+N)^{s}N^{1-\frac{1}{\rho}}.\label{2.065}
\end{eqnarray}
From \eqref{2.060}, \eqref{2.065}, and the definition of $G(x)$, we have
\begin{align}
\|\langle D\rangle^{s}P_{N}f\|_{L_{x}^{p}L_{t}^{q}}&\leq C(1+N)^{s}N^{1-\frac{1}{\rho}}\|G\|_{L_{x}^{r}}=C(1+N)^{s}N^{\frac{1}{r}-\frac{1}{p}}\|G\|_{L_{x}^{r}}\nonumber\\
&=C(1+N)^{s}N^{\frac{1}{r}-\frac{1}{p}}\|f(x,t)\|_{L_{x}^{r}L_{t}^{q}}.\label{2.066}
\end{align}
Applying \eqref{2.066} to $f=P_{N}f$, and noting that $P_{N}^{2}f=P_{N}f$, we find that
\begin{eqnarray}
&&\|\langle D\rangle^{s}P_{N}f\|_{L_{x}^{p}L_{t}^{q}}\leq C(1+N)^{s}N^{\frac{1}{r}-\frac{1}{p}}\|P_{N}f(x,t)\|_{L_{x}^{r}L_{t}^{q}}.\label{2.067}
\end{eqnarray}

The proof of Lemma 2.12 is completed.

\bigskip

\bigskip

\section{Multilinear estimates}

\setcounter{equation}{0}

\setcounter{Theorem}{0}

\setcounter{Lemma}{0}

\setcounter{Proposition}{0}

\setcounter{section}{3}
In order to prove Theorem 1.1, we introduce the following mixed Lebesgue spaces
\begin{eqnarray*}
&&\|f\|_{Y_{N}(\SR)}=\left\|D^{\frac{4-k-8s_{1}}{4k}}\langle D\rangle^{s}P_{N}f\right\|_{L_{x}^{\frac{20k}{5k-4(1-2s_{1})}}L_{t}^{\frac{5k}{2-4s_{1}}}}
+\left\|D^{2s_{1}}\langle D\rangle^{s} P_{N}f\right\|_{L_{x}^{\frac{5}{1-2s_{1}}}L_{t}^{\frac{10}{1+8s_{1}}}}\\
&&\quad+\left\|D^{\frac{4(1+s_{1})-k}{4k}}\langle D\rangle^{s}P_{N}f\right\|_{L_{x}^{\frac{20k}{5k-4(1+s_{1})}}L_{t}^{\frac{5k}{2(1+s_{1})}}}
+\left\|D^{\frac{2s_{1}+a(k-4(1-s_{1}))}{2(k+1)}} \langle D\rangle^{s} P_{N}f\right\|_{L_{x}^{\frac{k+1}{1-s_{1}}}L_{t}^{2(k+1)}}\\
&&\quad+\left\|D^{\frac{a(k-4(1+s_{1}))}{2k}}\langle D\rangle^{s}P_{N}f\right\|_{L_{x}^{\frac{5k}{4(1+s_{1})}}L_{t}^{\frac{5k}{2(1+s_{1})}}}+\left\|D^{\frac{4-k-12s_{1}}{4k}}\langle D\rangle^{s}P_{N}f\right\|_{L_{x}^{\frac{20k}{5k-4+12s_{1}}}L_{t}^{\frac{5k}{2-6s_{1}}}}\\
&&\quad+\left\|D^{(4a+1)s_{1}}\langle D\rangle^{s}P_{N}f\right\|_{L_{x}^{\frac{5}{1-9s_{1}}}L_{t}^{\frac{10}{1-4s_{1}}}}
+\left\| D^{-\frac{2s_{1}}{k}+\frac{(k-4+4s_{1})a}{2k}}\langle D\rangle^{s}P_{N}f\right\|
_{L_{x}^{\frac{5k}{4-2s_{1}}}L_{t}^{\frac{5k}{2-6s_{1}}}},\\
&&\|f\|_{Y(\SR)}=\|\|f\|_{Y_{N}(\SR)}\|_{\ell_{N}^{2}}=\left(\sum_{N\in2^{\bf{N}}}\|f\|_{Y_{N}(\SR)}^{2}\right)^{\frac{1}{2}}.
\end{eqnarray*}
where $k\geq 5$,\, $s\in\R$,\, $0<s_{1}<\frac{1}{9}$, and $a\in\mathbf{N}$, such that
\begin{eqnarray*}
&a>\max\left\{\frac{2(k+1)(\sigma+2s_{1}-s)-2s_{1}}{k-4+4s_{1}},\, -\frac{2ks}{k-4-4s_{1}},\,\frac{\sigma-s}{4s_{1}}+\frac{1}{4},\,
-\frac{2(ks-2s_{1})}{k-4+4s_{1}} \right\}.
\end{eqnarray*}
Firstly, we prove the multilinear estimates, which play an important role in proving Theorem 1.1.

\begin{Proposition}\label{pro3.1}
Suppose that $k\geq 5$,\, $s\in\R$,\, $0<s_{1}<\frac{1}{9}$,\, \,$0<\delta<\frac{s_{1}}{k+4-2s_{1}}$,\, $b=\frac{1}{2}+\delta$,\, $b_{1}=-\frac{1}{2}+(k+3)\delta$, $\sigma>\frac{1}{2}-\frac{2}{k}+\frac{4s_{1}}{k}$ and $a\in\mathbf{N}$, such that
\begin{eqnarray*}
&a>\max\left\{\frac{2(k+1)(\sigma+2s_{1}-s)-2s_{1}}{k-4+4s_{1}},\, -\frac{2ks}{k-4-4s_{1}},\,\frac{\sigma-s}{4s_{1}}+\frac{1}{4},\,
-\frac{2(ks-2s_{1})}{k-4+4s_{1}} \right\}.
\end{eqnarray*}
Then,  we have
\begin{eqnarray}
&&\|\partial_{x}(v+z)^{k+1}\|_{X^{\sigma, b_{1}}(\SR\times\SR)}\leq C\left(\|v\|_{X^{\sigma,b}(\SR\times\SR)}^{k+1}+\|z\|_{Y(\SR)}^{k+1}\right).\label{3.01}
\end{eqnarray}

\end{Proposition}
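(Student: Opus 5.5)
By duality, \eqref{3.01} reduces to a $(k+2)$-linear integral estimate. We have $\|\partial_x F\|_{X^{\sigma,b_1}}=\sup\left|\int\!\!\int \partial_x F\,\bar h\,dx\,dt\right|$, where the supremum runs over $h$ with $\|h\|_{X^{-\sigma,-b_1}}\leq1$, and $-b_1=\frac12-(k+3)\delta<\frac12$. We expand $(v+z)^{k+1}=\sum_{m=0}^{k+1}\binom{k+1}{m}v^{m}z^{k+1-m}$. Each factor is then decomposed dyadically in frequency: $v=\sum_{N_j}P_{N_j}v$, $z=\sum_{N_j}P_{N_j}z$ and $h=\sum_{N_0}P_{N_0}h$. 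By symmetry we order the frequencies as $N_1\geq N_2\geq\cdots\geq N_{k+1}$. Frequency support forces either $N_0\lesssim N_1\sim N_2$ (high--high interaction) or $N_0\sim N_1$ (high--low interaction). The derivative $\partial_x$ contributes $N_0\lesssim N_1$.

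\textbf{Case $m=k+1$ (only $v$).} This is the deterministic estimate. We put the highest-frequency $v$ and $h$ in the local smoothing/maximal-function pair: \eqref{2.042} gives $L_x^\infty L_t^2$ with a gain of $N^{1}$, at $b$-level $\frac12+$ for $v$ and (after interpolation with $L^2$) a slightly smaller gain for $h$ at level $\frac12-$. The remaining $k-1$ factors go in $L_x^{q}L_t^{r}$ norms from \eqref{2.041}, namely maximal-type $L_x^{k}L_t^{\infty}$-like spaces costing $N^{\frac12-\frac{2}{k}}$ each (up to $s_1$ and $\delta$ losses). Summing these costs is exactly where the condition $\sigma>\frac12-\frac2k+\frac{4s_1}{k}$ enters. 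The $\delta$-losses from using $-b_1<\frac12$ are absorbed by the $s_1$ margin via the constraint $\delta<\frac{s_1}{k+4-2s_1}$. The dyadic sums are then closed by Cauchy--Schwarz in the top two frequencies, with the lower frequencies summed using the available positive power of $N_j$.

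\textbf{Cases $m\leq k$ (at least one $z$).} Each $z$ factor is measured in one of the eight $Y_N$ norms. Those norms are designed so that, by Hölder in $L_x^{p}L_t^{q}$, the exponents match the ones we use for $v$ and $h$ in Lemma~2.8. Several of them carry the large power $N^{a(\cdot)}$ coming from Proposition~2.1. For $z$ this power is a gain, so it can pay for arbitrary negative $s$ and for the weight $N_0^{\sigma+1}$. We split into subcases according to whether the highest frequency $N_1$ is carried by $v$ or by $z$, and whether it is $h$ or a $z$ that pairs with it.

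\begin{itemize}
\item When $z$ carries $N_1$, we use the norm $L_x^{\frac{k+1}{1-s_1}}L_t^{2(k+1)}$ (or $L_x^{\frac{5}{1-9s_1}}L_t^{\frac{10}{1-4s_1}}$) with its $a$-weighted derivative, together with $h$ in $L_x^\infty L_t^2$. The conditions $a>\frac{2(k+1)(\sigma+2s_1-s)-2s_1}{k-4+4s_1}$ and $a>\frac{\sigma-s}{4s_1}+\frac14$ guarantee that the net power of $N_1$ is negative.
\item Low-frequency $z$'s are placed in the $L_x^{\frac{5k}{\cdot}}$-type norms. The conditions $a>-\frac{2ks}{k-4-4s_1}$ and $a>-\frac{2(ks-2s_1)}{k-4+4s_1}$ handle the $\langle D\rangle^{s}$ factor when $s<0$.
\item The mixed subcases use the non-$a$ norms with exponents $D^{\frac{4-k-8s_1}{4k}}$, etc., which interpolate the $v$ exponents.
\end{itemize}

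In each subcase, after Hölder we check that the exponents sum to $1$ in both $x$ and $t$, and that the power of the top frequency is negative, so that the dyadic sums converge.

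I expect the main obstacle to be the bookkeeping of the mixed terms $1\leq m\leq k$. There we must pick, for each configuration, one of the eight $Y_N$ norms for every $z$ and a Lemma~2.8 norm for every $v$ and for $h$, so that the Hölder exponents close exactly and every frequency power has the right sign. The low-frequency region $N_j\leq1$ also needs care, since homogeneous $D^{\alpha}$ with $\alpha<0$ appears there and must be compensated using $\langle D\rangle$ and the smallness of the support. I would first treat the case $m=0$ (all $z$) and the case $m=k+1$, and then try to obtain the general case by interpolating the exponents between these two endpoint configurations.
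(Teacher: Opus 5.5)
\textbf{There is a genuine gap: the one concrete H\"older configuration you commit to does not close on the mixed terms.}

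Your skeleton is the paper's: duality, Littlewood--Paley pieces, H\"older in $L_x^pL_t^q$, and the $a$-weighted $Y_N$ norms paying for $\langle D\rangle^{s}$, $s<0$, at high frequency. The trouble is in the details. You put both $h$ and the top-frequency $v$ into local-smoothing norms $L_x^{q}L_t^{2}$, so H\"older in $t$ forces every other factor into $L_t^\infty$. None of the eight $Y_N$ norms has time exponent $\infty$, and Proposition 2.1 cannot supply one. Passing $L^{p_1}_\omega$ inside $L_x^qL_t^r$ needs $p_1\geq\max\{q,r\}$, i.e.\ $r<\infty$. The deterministic maximal estimate \eqref{2.06} does not help either, since $f\in H^s$ with $s$ arbitrarily negative.

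Hence, for every term $z_1\cdots z_j v_{j+1}\cdots v_{k+1}$ in which a $v$ carries the top frequency, your scheme needs a maximal-function bound on the $z$'s that $Y$ does not provide. Your assertion that the $Y_N$ exponents ``match the ones we use for $v$ and $h$'' is therefore false for this configuration. Your first bullet, where a top-frequency $z$ is paired only with $h$, is compatible with $Y$; the $v$-on-top case is what breaks.

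Interpolating between the all-$v$ and all-$z$ configurations does not repair this. Multilinear interpolation places a slot in an intermediate space between the $v$-norm and the $z$-norm. It does not give an estimate in which some slots carry $v$ and others carry $z$, each in its own fixed norm.

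Your count is also off. Besides $h$ and the top $v$, $k$ factors remain, not $k-1$. And $L_x^kL_t^\infty$ costs $N^{\frac12-\frac1k}$ by \eqref{2.06}, not $N^{\frac12-\frac2k}$. The all-$v$ term can still be closed your way by spending the surplus gain at the top frequency, but not by the per-factor count you give.

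\textbf{The missing idea is to use a single local-smoothing slot, for $h$, and give the top factor only a small gain.}

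By duality and \eqref{2.042} with $q_2=1/s_1$, the paper moves $D^{1-2s_1}$ of $\partial_x$ onto $h\in L_x^{1/s_1}L_t^2$. The requirement $b(1-2s_1)<-b_1$ is exactly $\delta<\frac{s_1}{k+4-2s_1}$. This reduces \eqref{3.01} to bounding $\|D^{2s_1}\langle D\rangle^{\sigma}(v+z)^{k+1}\|_{L_x^{1/(1-s_1)}L_t^2}$, where the multiplier is bounded by $N_{\max}^{2s_1}\langle N_{\max}\rangle^{\sigma}$. So $h$ is never decomposed and no high--high/high--low split is needed.

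The top factor then takes a small-gain norm:
\begin{itemize}
\item for a $v$: $D^{3s_1}$ in $L_x^{\frac{5}{1-3s_1}}L_t^{\frac{10}{1+12s_1}}$;
\item for a $z$: $D^{(4a+1)s_1}$ in $L_x^{\frac{5}{1-9s_1}}L_t^{\frac{10}{1-4s_1}}$, using $a>\frac{\sigma-s}{4s_1}+\frac14$.
\end{itemize}
The other $k$ factors go into finite-time-exponent perturbations of the Kenig--Ponce--Vega space $L_x^{5k/4}L_t^{5k/2}$, namely $L_x^{\frac{5k}{4-2s_1}}L_t^{\frac{5k}{2-6s_1}}$ or $L_x^{\frac{5k}{4(1+s_1)}}L_t^{\frac{5k}{2(1+s_1)}}$. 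By \eqref{2.041} and Bernstein, a $v$ in the first of these costs $N^{\frac12-\frac2k+\frac{4s_1}{k}}$. That is exactly where $\sigma>\frac12-\frac2k+\frac{4s_1}{k}$ enters.

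The $Y_N$ norms contain precisely these slots: unweighted versions for $N\le 1$, and $a$-weighted versions for $N>1$, closed by $a>-\frac{2(ks-2s_1)}{k-4+4s_1}$ or $a>-\frac{2ks}{k-4-4s_1}$ respectively. So in a mixed product a $z$ simply replaces a $v$ in the same slot, and no interpolation is needed. Only when every factor is a $z$ and every frequency exceeds $1$ does the paper switch all factors to $L_x^{\frac{k+1}{1-s_1}}L_t^{2(k+1)}$, using the first condition on $a$.
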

\noindent{\bf Proof.}
According to the duality idea, to prove \eqref{3.01}, we only need to prove
\begin{eqnarray}
&&\left|\int_{\SR\times\SR}\langle D\rangle^{\sigma}\partial_{x}[(v+z)^{k+1}]hdxdt\right|\leq C\|h\|_{X^{0,-b_{1}}}
\left(\|v\|_{X^{\sigma,b}}^{k+1}+\|z\|_{Y(\tiny\bf{R})}^{k+1}\right).\label{3.02}
\end{eqnarray}
By using H\"{o}lder inequality and \eqref{2.042}, $s_{1}\leq\frac{1}{2}$,\, $b(1-2s_{1})<-b_{1}$, we have
\begin{align}
&\quad\left|\int_{\SR\times\SR}\langle D\rangle^{\sigma}\partial_{x}[(v+z)^{k+1}]hdxdt\right|\nonumber\\
&\leq C
\left\|D^{2s_{1}}\langle D\rangle^{\sigma}(v+z)^{k+1}\right\|_{L_{x}^{\frac{1}{1-s_{1}}}L_{t}^{2}}\left\|D^{1-2s_{1}}h\right\|_{L_{x}^{\frac{1}{s_{1}}}L_{t}^{2}}\nonumber\\
&\leq C\left\|D^{2s_{1}}\langle D\rangle^{\sigma}(v+z)^{k+1}\right\|_{L_{x}^{\frac{1}{1-s_{1}}}L_{t}^{2}}\|h\|_{X^{0,-b_{1}}}.\label{3.03}
\end{align}
From \eqref{3.03}, to prove \eqref{3.02}, we only need to prove
\begin{eqnarray}
&&\left\|D^{2s_{1}}\langle D\rangle^{\sigma}(v+z)^{k+1}\right\|_{L_{x}^{\frac{1}{1-s_{1}}}L_{t}^{2}}\leq C\left(\|v\|_{X^{\sigma,b}}^{k+1}+\|z\|_{Y(\tiny\bf{R})}^{k+1}\right).\label{3.04}
\end{eqnarray}
In order to prove \eqref{3.04}, we introduce the dyadic decompositions of $v$ and $z$. We denote $P_{N_{i}}v=v_{i}$, $P_{N_{i}}z=z_{i}$, then, we see that $\supp \mathscr{F}_{x}v_{i}\subset\{\xi||\xi|\sim N_{i}\}$, and  $\supp\mathscr{F}_{x}z_{i}\subset\{\xi||\xi|\sim N_{i}\}$. Next, we will consider the following three cases:

\noindent{\bf Case 1:} The terms on the left-hand side of \eqref{3.04} contain only $v's$; that is, $v_{1}\cdot v_{2}\cdot\ldots \cdot v_{k+1}$. We denote $\sum\limits_{N_{1}, N_{2},\ldots, N_{k+1}}=\sum$.
Without loss of generality, we assume that $N_{1}\leq N_{2}\leq \ldots\leq N_{k+1}$, then,
we consider the following two cases: {\bf Case 1A:} $N_{k+1}\leq 1$ and {\bf Case 1B:} $N_{k+1}>1$, respectively.

\noindent{\bf Case 1A:} For $N_{k+1}\leq 1$, by using \eqref{2.041} and H\"{o}lder inequality, Lemmas 2.11, 2.12, we have
\begin{align}
&\quad\left\|D^{2s_{1}}\langle D\rangle^{\sigma}\left(\prod_{i=1}^{k+1}v_{i}\right)\right\|_{L_{x}^{\frac{1}{1-s_{1}}}L_{t}^{2}}\leq CN_{k+1}^{2s_{1}}\left\|\left(\prod_{i=1}^{k+1}v_{i}\right)\right\|_{L_{x}^{\frac{1}{1-s_{1}}}L_{t}^{2}}\nonumber\\
&\leq C\left(\prod_{i=1}^{k}\|v_{i}\|_{L_{x}^{\frac{5k}{4-3s_{1}}}L_{t}^{\frac{5k}{2-4s_{1}}}}\right)N_{k+1}^{2s_{1}}\left\|v_{k+1}\right\|
_{L_{x}^{\frac{5}{1-2s_{1}}}L_{t}^{\frac{10}{1+8s_{1}}}}\nonumber\\
&\leq
C\left(\prod_{i=1}^{k}\|v_{i}\|_{L_{x}^{\frac{5k}{4-3s_{1}}}L_{t}^{\frac{5k}{2-4s_{1}}}}\right)
\left\|D^{2s_{1}}v_{k+1}\right\|_{L_{x}^{\frac{5}{1-2s_{1}}}L_{t}^{\frac{10}{1+8s_{1}}}}\nonumber\\
&\leq C\left(\prod_{i=1}^{k}N_{i}^{\frac{k-4(1-s_{1})}{4k}}
\left\|D^{\frac{4-k-8s_{1}}{4k}}D^{-\frac{4-k-8s_{1}}{4k}}v_{i}\right\|_{L_{x}^{\frac{20k}{5k-4(1-2s_{1})}}L_{t}^{\frac{5k}{2-4s_{1}}}}\right)
\|v_{k+1}\|_{X^{0,b}}\nonumber\\
&\leq C\left(\prod_{i=1}^{k}N_{i}^{\frac{k-4(1-s_{1})}{4k}-\frac{4-k-8s_{1}}{4k}}
\left\|D^{\frac{4-k-8s_{1}}{4k}}v_{i}\right\|_{L_{x}^{\frac{20k}{5k-4(1-2s_{1})}}L_{t}^{\frac{5k}{2-4s_{1}}}}\right)
\|v_{k+1}\|_{X^{\sigma,b}}\nonumber\\
&\leq C\left(\prod_{i=1}^{k}N_{i}^{\frac{k-4(1-s_{1})}{4k}-\frac{4-k-8s_{1}}{4k}}\|v_{i}\|_{X^{0,b}}\right)
\|v_{k+1}\|_{X^{\sigma,b}}.\label{3.05}
\end{align}
where we use the following facts that $N_{i}\leq 1$,
\begin{eqnarray*}
&&\frac{2-4s_{1}}{5}+\frac{1+8s_{1}}{10}=\frac{1}{2},\quad \frac{1+8s_{1}}{5}-\frac{1-2s_{1}}{5}=2s_{1},\\
&& \frac{4(1-2s_{1})}{5k}-\frac{5k-4(1-2s_{1})}{20k}=\frac{20-5k-40s_{1}}{20k}=\frac{4-k-8s_{1}}{4k},
\end{eqnarray*}
and
\begin{eqnarray*}
&& \frac{k-4(1-s_{1})}{4k}-\frac{4-k-8s_{1}}{4k}=\frac{1}{2}-\frac{2}{k}+\frac{3s_{1}}{k}>0.
\end{eqnarray*}
By using \eqref{3.06} and Lemma 2.10, we have
\begin{eqnarray}
&&\sum\left\|D^{2s_{1}}\langle D\rangle^{\sigma}\left(\prod_{i=1}^{k+1}v_{i}\right)\right\|_{L_{x}^{\frac{1}{1-s_{1}}}L_{t}^{2}}
\leq C\|v\|_{X^{\sigma,b}}^{k+1}.\label{3.06}
\end{eqnarray}

\noindent{\bf Case 1B:} For $N_{k+1}>1$, by using \eqref{2.041} and H\"{o}lder inequality, Lemmas 2.11, 2.12, we have
\begin{align}
&\quad\left\|D^{2s_{1}}\langle D\rangle^{\sigma}\left(\prod_{i=1}^{k+1}v_{i}\right)\right\|_{L_{x}^{\frac{1}{1-s_{1}}}L_{t}^{2}}\leq CN_{k+1}^{2s_{1}}N_{k+1}^{\sigma}\left\|\left(\prod_{i=1}^{k+1}v_{i}\right)\right\|_{L_{x}^{\frac{1}{1-s_{1}}}L_{t}^{2}}\nonumber\\
&\leq
C\left(\prod_{i=1}^{k}\|v_{i}\|_{L_{x}^{\frac{5k}{4-2s_{1}}}L_{t}^{\frac{5k}{2-6s_{1}}}}N_{k+1}^{2s_{1}}\right)N_{k+1}^{\sigma}\left\|v_{k+1}\right\|
_{L_{x}^{\frac{5}{1-3s_{1}}}L_{t}^{\frac{10}{1+12s_{1}}}}\nonumber\\
&\leq C\left(\prod_{i=1}^{k}\|v_{i}\|_{L_{x}^{\frac{5k}{4-2s_{1}}}L_{t}^{\frac{5k}{2-6s_{1}}}}\right)N_{k+1}^{2s_{1}}N_{k+1}^{\sigma}\left\|D^{-3s_{1}}\langle D\rangle^{-\sigma}D^{3s_{1}}\langle D\rangle^{\sigma}v_{k+1}\right\|
_{L_{x}^{\frac{5}{1-3s_{1}}}L_{t}^{\frac{10}{1+12s_{1}}}}\nonumber\\
&\leq C\left(\prod_{i=1}^{k}\|v_{i}\|_{L_{x}^{\frac{5k}{4-2s_{1}}}L_{t}^{\frac{5k}{2-6s_{1}}}}\right)N_{k+1}^{2s_{1}}N_{k+1}^{\sigma}N_{k+1}^{-3s_{1}}N_{k+1}^{-\sigma}
\left\|D^{3s_{1}}\langle D\rangle^{\sigma}v_{k+1}\right\|
_{L_{x}^{\frac{5}{1-3s_{1}}}L_{t}^{\frac{10}{1+12s_{1}}}}\nonumber\\
&\leq
C\left(\prod_{i=1}^{k}\|v_{i_{2}}\|_{L_{x}^{\frac{5k}{4-2s_{1}}}L_{t}^{\frac{5k}{2-6s_{1}}}}\right)N_{k+1}^{-s_{1}}\left\|D^{3s_{1}}\langle D\rangle^{\sigma}v_{k+1}\right\|
_{L_{x}^{\frac{5}{1-3s_{1}}}L_{t}^{\frac{10}{1+12s_{1}}}}\nonumber\\
&\leq C
\left(\prod_{i=1}^{k} N_{i}^{\frac{5k-20+20s_{1}}{20k}}\left\|D^{\frac{20-5k-60s_{1}}{20k}}D^{-\frac{20-5k-60s_{1}}{20k}}v_{i}\right\|
_{L_{x}^{\frac{20k}{5k-4+12s_{1}}}L_{t}^{\frac{5k}{2-6s_{1}}}}\right)N_{k+1}^{-s_{1}}\left\|v_{k+1}\right\|_{X^{\sigma,b}}\nonumber\\
&\leq C
\left(\prod_{i=1}^{k} N_{i}^{\frac{5k-20+20s_{1}}{20k}-\frac{20-5k-60s_{1}}{20k}}\left\|D^{\frac{20-5k-60s_{1}}{20k}}v_{i}\right\|
_{L_{x}^{\frac{20k}{5k-4+12s_{1}}}L_{t}^{\frac{5k}{2-6s_{1}}}}\right)N_{k+1}^{-s_{1}}\left\|v_{k+1}\right\|_{X^{\sigma,b}}\nonumber\\
&\leq C \left(\prod_{i=1}^{k}N_{i}^{\frac{5k-20+20s_{1}}{20k}-\frac{20-5k-60s_{1}}{20k}}\left\|v_{i}\right\|
_{X^{0,b}}\right)
N_{k+1}^{-s_{1}}\|v_{k+1}\|_{X^{\sigma,b}},\label{3.07}
\end{align}
where we use the following facts that $N_{k+1}>1$,
\begin{eqnarray*}
&&\frac{2(1-3s_{1})}{5}+\frac{1+12s_{1}}{10}=\frac{1}{2},\quad \frac{1+12s_{1}}{5}-\frac{1-3s_{1}}{5}=3s_{1},\\
&&\frac{2(1-3s_{1})}{5k}+\frac{2(5k-4+12s_{1})}{20k}=\frac{1}{2},\quad\frac{10}{1+12s_{1}}\geq2\Leftrightarrow s_{1}\leq\frac{1}{3},\\
&& \frac{4(1-3s_{1})}{5k}-\frac{5k-4+12s_{1}}{20k}=\frac{20-5k-60s_{1}}{20k},
\end{eqnarray*}
and
\begin{eqnarray*}
&& \frac{5k-20+20s_{1}}{20k}-\frac{20-5k-60s_{1}}{20k}=\frac{1}{2}-\frac{2}{k}+\frac{4s_{1}}{k}\in(0,\sigma).
\end{eqnarray*}
By using \eqref{3.07} and Lemma 2.10, we have
\begin{eqnarray}
&&\sum\left\|D^{2s_{1}}\langle D\rangle^{\sigma}\left(\prod_{i=1}^{k+1}v_{i}\right)\right\|_{L_{x}^{\frac{1}{1-s_{1}}}L_{t}^{2}}
\leq C\|v\|_{X^{\sigma,b}}^{k+1}.\label{3.08}
\end{eqnarray}

\noindent{\bf Case 2:}
The terms on the left-hand side of \eqref{3.04} contain only $z's$; that is, $z_{1}\cdot z_{2}\cdot\ldots\cdot z_{k+1}$. We denote $\sum\limits_{N_{1}, N_{2},\ldots, N_{k+1}}=\sum$.
Without loss of generality, we assume that $N_{1}\leq N_{2}\leq \ldots\leq N_{k+1}$, then,
we consider the following two cases: {\bf Case 2A:} $N_{k+1}\leq 1$ and {\bf Case 2B:} $N_{k+1}>1$, respectively.

\noindent{\bf Case 2A:} For $N_{k+1}\leq 1$, by using H\"{o}lder inequality, Lemmas 2.11, 2.12 and a proof similar to \eqref{3.05}, we have
\begin{align}
&\quad\sum\left\|D^{2s_{1}}\langle D\rangle^{\sigma}\left(\prod_{i=1}^{k+1}z_{i}\right)\right\|_{L_{x}^{\frac{1}{1-s_{1}}}L_{t}^{2}}\leq C\sum N_{k+1}^{2s_{1}}\left\|\left(\prod_{i=1}^{k+1}z_{i}\right)\right\|_{L_{x}^{\frac{1}{1-s_{1}}}L_{t}^{2}}\nonumber\\
&\leq
C\sum\left[\left(\prod_{i=1}^{k}\|z_{i}\|_{L_{x}^{\frac{5k}{4-3s_{1}}}L_{t}^{\frac{5k}{2-4s_{1}}}}\right)N_{k+1}^{2s_{1}}\left\|z_{k+1}\right\|
_{L_{x}^{\frac{5}{1-2s_{1}}}L_{t}^{\frac{10}{1+8s_{1}}}}\right]\nonumber\\
&\leq
C\sum\left[\left(\prod_{i=1}^{k}\|z_{i}\|_{L_{x}^{\frac{5k}{4-3s_{1}}}L_{t}^{\frac{5k}{2-4s_{1}}}}\right)
\left\|D^{2s_{1}}\langle D\rangle^{s} z_{k+1}\right\|_{L_{x}^{\frac{5}{1-2s_{1}}}L_{t}^{\frac{10}{1+8s_{1}}}}\right]\nonumber\\
&\leq C\sum\left[\left(\prod_{i=1}^{k}N_{i}^{\frac{k-4(1-s_{1})}{4k}}
\left\|D^{\frac{4-k-8s_{1}}{4k}}D^{-\frac{4-k-8s_{1}}{4k}}z_{i}\right\|_{L_{x}^{\frac{20k}{5k-4(1-2s_{1})}}L_{t}^{\frac{5k}{2-4s_{1}}}}\right)\right.\nonumber\\
&\quad\left.\times\left\|D^{2s_{1}}\langle D\rangle^{s} z_{k+1}\right\|_{L_{x}^{\frac{5}{1-2s_{1}}}L_{t}^{\frac{10}{1+8s_{1}}}}\right]\nonumber\\
&\leq C\sum\left[\left(\prod_{i=1}^{k}N_{i}^{\frac{k-4(1-s_{1})}{4k}-\frac{4-k-8s_{1}}{4k}}
\left\|D^{\frac{4-k-8s_{1}}{4k}}\langle D\rangle^{s} z_{i}\right\|_{L_{x}^{\frac{20k}{5k-4(1-2s_{1})}}L_{t}^{\frac{5k}{2-4s_{1}}}}\right)\right.\nonumber\\
&\quad\left.\times\left\|D^{2s_{1}}\langle D\rangle^{s} z_{k+1}\right\|_{L_{x}^{\frac{5}{1-2s_{1}}}L_{t}^{\frac{10}{1+8s_{1}}}}\right]\nonumber\\
&\leq C\|z\|_{Y(\SR)}^{k+1},\label{3.09}
\end{align}
where we use the following facts that $N_{i}\leq 1$,
\begin{eqnarray*}
&&\frac{2-4s_{1}}{5}+\frac{1+8s_{1}}{10}=\frac{1}{2},\quad \frac{1+8s_{1}}{5}-\frac{1-2s_{1}}{5}=2s_{1},\\
&& \frac{4(1-2s_{1})}{5k}-\frac{5k-4(1-2s_{1})}{20k}=\frac{20-5k-40s_{1}}{20k}=\frac{4-k-8s_{1}}{4k},
\end{eqnarray*}
and
\begin{eqnarray*}
&& \frac{k-4(1-s_{1})}{4k}-\frac{4-k-8s_{1}}{4k}=\frac{1}{2}-\frac{2}{k}+\frac{3s_{1}}{k}>0.
\end{eqnarray*}

\noindent{\bf Case 2B:} For $N_{k+1}> 1$, we consider $N_{1}>1$ and $N_{1}\leq1$, respectively.

When $N_{1}>1$, by using  Lemmas 2.11, 2.12 and  H\"{o}lder inequality, we have
\begin{align}
&\quad\left\|D^{2s_{1}}\langle D\rangle^{\sigma}\left(\prod_{i=1}^{k+1}z_{i}\right)\right\|_{L_{x}^{\frac{1}{1-s_{1}}}L_{t}^{2}}
\leq CN_{k+1}^{2s_{1}}N_{k+1}^{\sigma}\left\|\left(\prod_{i=1}^{k+1}z_{i}\right)\right\|_{L_{x}^{\frac{1}{1-s_{1}}}L_{t}^{2}}\nonumber\\
&\leq
C\left(\prod_{i=1}^{k}\|z_{i}\|_{L_{x}^{\frac{k+1}{1-s_{1}}}L_{t}^{2(k+1)}}\right)
N_{k+1}^{2s_{1}}N_{k+1}^{\sigma}\left\|z_{k+1}\right\|_{L_{x}^{\frac{k+1}{1-s_{1}}}L_{t}^{2(k+1)}}\nonumber\\
&\leq
C\left(\prod_{i=1}^{k}\|z_{i}\|_{L_{x}^{\frac{k+1}{1-s_{1}}}L_{t}^{2(k+1)}}\right)
\left\|D^{2s_{1}}\langle D\rangle^{\sigma}z_{k+1}\right\|_{L_{x}^{\frac{k+1}{1-s_{1}}}L_{t}^{2(k+1)}}\nonumber\\
&\leq C\left(\prod_{i=1}^{k}
\left\|D^{\frac{2s_{1}+a(k-4(1-s_{1}))}{2(k+1)}}\langle D\rangle^{s}\langle D\rangle^{-s} D^{-\frac{2s_{1}+a(k-4(1-s_{1}))}{2(k+1)}} z_{i}\right\|_{L_{x}^{\frac{k+1}{1-s_{1}}}L_{t}^{2(k+1)}}\right)\nonumber\\
&\quad\times\left\|D^{\frac{2s_{1}+a(k-4(1-s_{1}))}{2(k+1)}}\langle D\rangle^{s}\langle D\rangle^{-s}D^{-\frac{2s_{1}+a(k-4(1-s_{1}))}{2(k+1)}}D^{2s_{1}}\langle D\rangle^{\sigma}z_{k+1}\right\|_{L_{x}^{\frac{k+1}{1-s_{1}}}L_{t}^{2(k+1)}}\nonumber\\
&\leq C\left(\prod_{i=1}^{k}N_{i}^{-s-\frac{2s_{1}+a(k-4(1-s_{1}))}{2(k+1)}}\left\|D^{\frac{2s_{1}+a(k-4(1-s_{1}))}{2(k+1)}}\langle D\rangle^{s} z_{i}\right\|_{L_{x}^{\frac{k+1}{1-s_{1}}}L_{t}^{2(k+1)}}\right)\nonumber\\
&\quad\times N_{k+1}^{2s_{1}+\sigma-s-\frac{2s_{1}+a(k-4(1-s_{1}))}{2(k+1)}}
\left\|D^{\frac{2s_{1}+a(k-4(1-s_{1}))}{2(k+1)}} \langle D\rangle^{s} z_{k+1}\right\|_{L_{x}^{\frac{k+1}{1-s_{1}}}L_{t}^{2(k+1)}}.\label{3.010}
\end{align}
By using \eqref{3.010}, H\"{o}lder inequality, and $a\in\mathbf{N}$ with
\begin{eqnarray*}
&&a>\frac{2(k+1)(\sigma+2s_{1}-s)-2s_{1}}{k-4+4s_{1}},
\end{eqnarray*}
it follows that the powers of $N_{i}(1\leq i\leq k+1)$ in \eqref{3.010} are negative. Since $N_{i}>1,\,1\leq i\leq k+1$, we have
\begin{eqnarray}
&&\sum\left\|D^{2s_{1}}\langle D\rangle^{\sigma}\left(\prod_{i=1}^{k+1}z_{i}\right)\right\|_{L_{x}^{\frac{1}{1-s_{1}}}L_{t}^{2}}
\leq C\|z\|_{Y(\SR)}^{k+1}.\label{3.011}
\end{eqnarray}

When $N_{1}\leq1$, then there exists a $\tilde{k}$ such that $N_{\tilde{k}}\leq1$ and $N_{\tilde{k}+1}>1$. By using H\"{o}lder inequality and Lemmas 2.11, 2.12, we have
\begin{align}
&\quad\left\|D^{2s_{1}}\langle D\rangle^{\sigma}\left(\prod_{i=1}^{k+1}z_{i}\right)\right\|_{L_{x}^{\frac{1}{1-s_{1}}}L_{t}^{2}}
\leq CN_{k+1}^{2s_{1}}N_{k+1}^{\sigma}\left\|\left(\prod_{i=1}^{k+1}z_{i}\right)\right\|_{L_{x}^{\frac{1}{1-s_{1}}}L_{t}^{2}}\nonumber\\
&\leq
C\left(\prod_{i=1}^{k}\|z_{i}\|_{L_{x}^{\frac{5k}{4(1+s_{1})}}L_{t}^{\frac{5k}{2(1+s_{1})}}}\right)
N_{k+1}^{2s_{1}}N_{k+1}^{\sigma}\left\|z_{k+1}\right\|_{L_{x}^{\frac{5}{1-9s_{1}}}L_{t}^{\frac{10}{1-4s_{1}}}}\nonumber\\
&\leq
C\left(\prod_{i=1}^{\tilde{k}}\|z_{i}\|_{L_{x}^{\frac{5k}{4(1+s_{1})}}L_{t}^{\frac{5k}{2(1+s_{1})}}}\right)
\nonumber\\
&\quad\times\left(\prod_{i=\tilde{k}+1}^{k}\|z_{i}\|_{L_{x}^{\frac{5k}{4(1+s_{1})}}L_{t}^{\frac{5k}{2(1+s_{1})}}}\right)
\left\|D^{2s_{1}}\langle D\rangle^{\sigma}z_{k+1}\right\|_{L_{x}^{\frac{5}{1-9s_{1}}}L_{t}^{\frac{10}{1-4s_{1}}}}\nonumber\\
&\leq
C\left(\prod_{i=1}^{\tilde{k}}N_{i}^{\frac{k-4-4s_{1}}{4k}}\|z_{i}\|_{L_{x}^{\frac{20k}{5k-4(1+s_{1})}}L_{t}^{\frac{5k}{2(1+s_{1})}}}\right)\nonumber\\
&\quad\times\left(\prod_{i=\tilde{k}+1}^{k}\left\|D^{\frac{a(k-4(1+s_{1}))}{2k}}\langle D\rangle^{s}\langle D\rangle^{-s} D^{-\frac{a(k-4(1+s_{1}))}{2k}}z_{i}\right\|_{L_{x}^{\frac{5k}{4(1+s_{1})}}L_{t}^{\frac{5k}{2(1+s_{1})}}}\right)\nonumber\\
&\quad\times \left\|D^{(4a+1)s_{1}}\langle D\rangle^{s}\langle D\rangle^{-s}D^{-(4a+1)s_{1}}D^{2s_{1}}\langle D\rangle^{\sigma}z_{k+1}\right\|_{L_{x}^{\frac{5}{1-9s_{1}}}L_{t}^{\frac{10}{1-4s_{1}}}}\nonumber\\
&\leq
C\left(\prod_{i=1}^{\tilde{k}}N_{i}^{\frac{k-4-4s_{1}}{4k}}
\left\|D^{\frac{4(1+s_{1})-k}{4k}}D^{-\frac{4(1+s_{1})-k}{4k}}z_{i}\right\|_{L_{x}^{\frac{20k}{5k-4(1+s_{1})}}L_{t}^{\frac{5k}{2(1+s_{1})}}}\right)\nonumber\\
&\quad\times\left(\prod_{i=\tilde{k}+1}^{k}N_{i}^{-s-\frac{a(k-4(1+s_{1}))}{2k}}\left\|D^{\frac{a(k-4(1+s_{1}))}{2k}}\langle D\rangle^{s} z_{i}\right\|_{L_{x}^{\frac{5k}{4(1+s_{1})}}L_{t}^{\frac{5k}{2(1+s_{1})}}}\right)\nonumber\\
&\quad\times N_{k+1}^{\sigma-s-(4a-1)s_{1}}\left\|D^{(4a+1)s_{1}}\langle D\rangle^{s}z_{k+1}\right\|_{L_{x}^{\frac{5}{1-9s_{1}}}L_{t}^{\frac{10}{1-4s_{1}}}}\nonumber\\
&\leq
C\left(\prod_{i=1}^{\tilde{k}}N_{i}^{\frac{k-4(1+s_{1})}{4k}-\frac{4(1+s_{1})-k}{4k}}\left\|D^{\frac{4(1+s_{1})-k}{4k}}\langle D\rangle^{s} z_{i}\right\|_{L_{x}^{\frac{20k}{5k-4(1+s_{1})}}L_{t}^{\frac{5k}{2(1+s_{1})}}}\right)\nonumber\\
&\quad\times\left(\prod_{i=\tilde{k}+1}^{k}N_{i}^{-s-\frac{a(k-4(1+s_{1}))}{2k}}\left\|D^{\frac{a(k-4(1+s_{1}))}{2k}}\langle D\rangle^{s} z_{i}\right\|_{L_{x}^{\frac{5k}{4(1+s_{1})}}L_{t}^{\frac{5k}{2(1+s_{1})}}}\right)\nonumber\\
&\quad\times N_{k+1}^{\sigma-s-(4a-1)s_{1}}\left\|D^{(4a+1)s_{1}}\langle D\rangle^{s}z_{k+1}\right\|_{L_{x}^{\frac{5}{1-9s_{1}}}L_{t}^{\frac{10}{1-4s_{1}}}},\label{3.012}
\end{align}
where we use the following facts
\begin{eqnarray*}
&& \frac{4(1+s_{1})}{5k}-\frac{5k-4(1+s_{1})}{20k}=\frac{20(1+s_{1})-5k}{20k}=\frac{4(1+s_{1})-k}{4k},\\
&& \frac{k-4(1+s_{1})}{4k}-\frac{4(1+s_{1})-k}{4k}=\frac{1}{2}-\frac{2}{k}-\frac{2s_{1}}{k}>0.
\end{eqnarray*}
By using \eqref{3.012}, H\"{o}lder inequality and $a\in\mathbf{N}$ with
\begin{eqnarray*}
&&a>\max\left\{ -\frac{2ks}{k-4-4s_{1}},\,\frac{\sigma-s}{4s_{1}}+\frac{1}{4}\right\},
\end{eqnarray*}
it follows that the powers of $N_{i}(i\geq \tilde{k}+1)$ and $N_{k+1}$ in \eqref{3.012} are negative. Since $N_{i}(i\geq \tilde{k}+1)$ and $N_{k+1}$ are greater than 1, we have
\begin{eqnarray}
&&\sum\left\|D^{2s_{1}}\langle D\rangle^{\sigma}\left(\prod_{i=1}^{k+1}z_{i}\right)\right\|_{L_{x}^{\frac{1}{1-s_{1}}}L_{t}^{2}}
\leq  C\|z\|_{Y(\tiny\bf{R})}^{k+1}.\label{3.013}
\end{eqnarray}

\noindent{\bf Case 3:}
The left-hand side of \eqref{3.04} contains both $v$ and $z$; that is, $z_{1}\cdot z_{2}\cdot\ldots z_{j}\cdot v_{j+1}\ldots\cdot v_{k+1}$. We denote
$\sum\limits_{N_{1}, N_{2},\ldots, N_{j}}\left(\sum\limits_{N_{j+1}, N_{j+2},\ldots, N_{k+1}}\right)=\sum$.
Without loss of generality, we assume that $N_{1}\leq N_{2}\leq \ldots\leq N_{j}$  and $N_{j+1}\leq N_{j+2}\leq \ldots\leq N_{k+1}$,
then, we consider the following two cases: {\bf Case 3A:} $N_{j}\leq N_{k+1}$ and {\bf Case 3B:} $N_{j}>N_{k+1}$, respectively.

\noindent{\bf Case 3A:} $N_{j}\leq N_{k+1}$.

When $N_{j}\leq 1\leq N_{k+1}$,
by using \eqref{2.041} and H\"{o}lder inequality, Lemmas 2.11, 2.12, we have
\begin{align}
&\quad\left\|D^{2s_{1}}\langle D\rangle^{\sigma}\left(\prod_{m=1}^{j}z_{m}\prod_{n=j+1}^{k+1}v_{n}\right)\right\|_{L_{x}^{\frac{1}{1-s_{1}}}L_{t}^{2}}
\leq CN_{k+1}^{2s_{1}}N_{k+1}^{\sigma}\left\|\left(\prod_{m=1}^{j}z_{m}\prod_{n=j+1}^{k+1}v_{n}\right)
\right\|_{L_{x}^{\frac{1}{1-s_{1}}}L_{t}^{2}}\nonumber\\
&\leq
C\left(\prod_{m=1}^{j}\|z_{m}\|_{L_{x}^{\frac{5k}{4-2s_{1}}}L_{t}^{\frac{5k}{2-6s_{1}}}}\right)
\left(\prod_{n=j+1}^{k}\|v_{n}\|_{L_{x}^{\frac{5k}{4-2s_{1}}}L_{t}^{\frac{5k}{2-6s_{1}}}}\right)N_{k+1}^{2s_{1}}N_{k+1}^{\sigma}\left\|v_{k+1}\right\|
_{L_{x}^{\frac{5}{1-3s_{1}}}L_{t}^{\frac{10}{1+12s_{1}}}}\nonumber\\
&\leq
C\left(\prod_{m=1}^{j}\|z_{m}\|_{L_{x}^{\frac{5k}{4-2s_{1}}}L_{t}^{\frac{5k}{2-6s_{1}}}}\right)
\nonumber\\
&\quad\times\left(\prod_{n=j+1}^{k}\|v_{n}\|_{L_{x}^{\frac{5k}{4-2s_{1}}}L_{t}^{\frac{5k}{2-6s_{1}}}}\right)\left\|D^{2s_{1}}\langle D\rangle^{\sigma}v_{k+1}\right\|
_{L_{x}^{\frac{5}{1-3s_{1}}}L_{t}^{\frac{10}{1+12s_{1}}}}\nonumber\\
&\leq C\left(\prod_{m=1}^{j}N_{m}^{\frac{5k-20+20s_{1}}{20k}}
\left\|D^{\frac{20-5k-60s_{1}}{20k}}D^{-\frac{20-5k-60s_{1}}{20k}}z_{m}\right\|_{L_{x}^{\frac{20k}{5k-4+12s_{1}}}L_{t}^{\frac{5k}{2-6s_{1}}}}\right)\nonumber\\
&\quad\times\left(\prod_{n=j+1}^{k}N_{n}^{\frac{5k-20+20s_{1}}{20k}}\left\|v_{n}\right\|_{L_{x}^{\frac{20k}{5k-4+12s_{1}}}L_{t}^{\frac{5k}{2-6s_{1}}}}\right)
N_{k+1}^{-s_{1}}\left\|v_{k+1}\right\|_{X^{\sigma,b}}\nonumber\\
&\leq C\left(\prod_{m=1}^{j}N_{m}^{\frac{5k-20+20s_{1}}{20k}-\frac{20-5k-60s_{1}}{20k}}
\left\|D^{\frac{20-5k-60s_{1}}{20k}}\langle D\rangle^{s} z_{m}\right\|_{L_{x}^{\frac{20k}{5k-4+12s_{1}}}L_{t}^{\frac{5k}{2-6s_{1}}}}\right)\nonumber\\
&\quad\times\left(\prod_{n=j+1}^{k}N_{n}^{\frac{5k-20+20s_{1}}{20k}-\frac{20-5k-60s_{1}}{20k}}\left\|v_{n}\right\|_{X^{0,b}}\right)
 N_{k+1}^{-s_{1}}\|v_{k+1}\|_{X^{\sigma,b}},\label{3.014}
\end{align}
where we use the following facts
\begin{eqnarray*}
&&\frac{2(1-3s_{1})}{5}+\frac{1+12s_{1}}{10}=\frac{1}{2},\quad \frac{1+12s_{1}}{5}-\frac{1-3s_{1}}{5}=3s_{1},\\
&&\frac{4(1-3s_{1})}{5k}-\frac{5k-4+12s_{1}}{20k}=\frac{20-5k-60s_{1}}{20k},\\
&&\frac{5k-20+20s_{1}}{20k}-\frac{20-5k-60s_{1}}{20k}=\frac{1}{2}-\frac{2}{k}+\frac{4s_{1}}{k}\in(0,\sigma).
\end{eqnarray*}
By using \eqref{3.014} and H\"{o}lder inequality, then, we have
\begin{align}
\sum\left\|D^{2s_{1}}\langle D\rangle^{\sigma}\left(\prod_{m=1}^{j}z_{m}\prod_{n=j+1}^{k+1}v_{n}\right)\right\|_{L_{x}^{\frac{1}{1-s_{1}}}L_{t}^{2}}
&\leq C\|z\|_{Y(\tiny\bf{R})}^{j}\|v\|_{X^{\sigma,b}}^{k+1-j}\nonumber\\
&\leq C\left(\|z\|_{Y(\tiny\bf{R})}^{k+1}+\|v\|_{X^{\sigma,b}}^{k+1}\right).\label{3.015}
\end{align}

When $N_{j}\leq N_{k+1}\leq 1$,
by using\eqref{2.041} and H\"{o}lder inequality, Lemmas 2.11, 2.12 and a proof similar to \eqref{3.05}, we have
\begin{align}
&\quad\left\|D^{2s_{1}}\langle D\rangle^{\sigma}\left(\prod_{m=1}^{j}z_{m}\prod_{n=j+1}^{k+1}v_{n}\right)\right\|_{L_{x}^{\frac{1}{1-s_{1}}}L_{t}^{2}}
\leq CN_{k+1}^{2s_{1}}\left\|\left(\prod_{m=1}^{j}z_{m}\prod_{n=j+1}^{k+1}v_{n}\right)\right\|_{L_{x}^{\frac{1}{1-s_{1}}}L_{t}^{2}}\nonumber\\
&\leq
C\left(\prod_{m=1}^{j}\|z_{m}\|_{L_{x}^{\frac{5k}{4-3s_{1}}}L_{t}^{\frac{5k}{2-4s_{1}}}}\right)
\left(\prod_{n=j+1}^{k}\|v_{n}\|_{L_{x}^{\frac{5k}{4-3s_{1}}}L_{t}^{\frac{5k}{2-4s_{1}}}}\right)N_{k+1}^{2s_{1}}
\left\|v_{k+1}\right\|_{L_{x}^{\frac{5}{1-2s_{1}}}L_{t}^{\frac{10}{1+8s_{1}}}}\nonumber\\
&\leq
C\left(\prod_{m=1}^{j}\|z_{m}\|_{L_{x}^{\frac{5k}{4-3s_{1}}}L_{t}^{\frac{5k}{2-4s_{1}}}}\right)
\nonumber\\
&\quad\times\left(\prod_{n=j+1}^{k}\|v_{n}\|_{L_{x}^{\frac{5k}{4-3s_{1}}}L_{t}^{\frac{5k}{2-4s_{1}}}}\right)\left\|D^{2s_{1}}\langle D\rangle^{\sigma}v_{k+1}\right\|_{L_{x}^{\frac{5}{1-2s_{1}}}L_{t}^{\frac{10}{1+8s_{1}}}}\nonumber\\
&\leq C\left(\prod_{m=1}^{j}N_{m}^{\frac{1}{2}-\frac{2}{k}+\frac{3s_{1}}{k}}
\left\|D^{\frac{4-k-8s_{1}}{4k}}\langle D\rangle^{s}z_{m}\right\|_{L_{x}^{\frac{20k}{5k-4(1-2s_{1})}}L_{t}^{\frac{5k}{2-4s_{1}}}}\right)\nonumber\\
&\quad\times\left(\prod_{n=j+1}^{k}N_{n}^{\frac{1}{2}-\frac{2}{k}+\frac{3s_{1}}{k}}
\|v_{n}\|_{X^{0,b}}\right)\|v_{k+1}\|_{X^{\sigma,b}}.\label{3.016}
\end{align}
By using \eqref{3.016} and H\"{o}lder inequality, we have
\begin{align}
\sum\left\|D^{2s_{1}}\langle D\rangle^{\sigma}\left(\prod_{m=1}^{j}z_{m}\prod_{n=j+1}^{k+1}v_{n}\right)\right\|_{L_{x}^{\frac{1}{1-s_{1}}}L_{t}^{2}}
&\leq C\|z\|_{Y(\tiny\bf{R})}^{j}\|v\|_{X^{\sigma,b}}^{k+1-j}\nonumber\\
&\leq C\left(\|z\|_{Y(\tiny\bf{R})}^{k+1}+\|v\|_{X^{\sigma,b}}^{k+1}\right).\label{3.017}
\end{align}

When $N_{k+1}\geq N_{j}> 1$, there exists a $\tilde{i}$, such that $N_{\tilde{i}}\leq1$ and $N_{j}\geq N_{\tilde{i}+1}>1$. By using  \eqref{2.041} and H\"{o}lder inequality, Lemmas 2.11, 2.12
as well as a proof similar to \eqref{3.014}, we have
\begin{align}
&\quad\left\|D^{2s_{1}}\langle D\rangle^{\sigma}\left(\prod_{m=1}^{j}z_{m}\prod_{i_{2}=n}^{k+1}v_{n}\right)\right\|_{L_{x}^{\frac{1}{1-s_{1}}}L_{t}^{2}}
\leq N_{k+1}^{2s_{1}}N_{k+1}^{\sigma}
\left\|\left(\prod_{m=1}^{j}z_{m}\prod_{n=j+1}^{k+1}v_{n}\right)\right\|_{L_{x}^{\frac{1}{1-s_{1}}}L_{t}^{2}}\nonumber\\
&\leq
C\left(\prod_{m=1}^{\tilde{i}}\|z_{m}\|_{L_{x}^{\frac{5k}{4-2s_{1}}}L_{t}^{\frac{5k}{2-6s_{1}}}}\right)
\nonumber\\
&\quad\times\left(\prod_{m=\tilde{i}+1}^{j}\|z_{m}\|_{L_{x}^{\frac{5k}{4-2s_{1}}}L_{t}^{\frac{5k}{2-6s_{1}}}}\right)
\left(\prod_{n=j+1}^{k}\|v_{n}\|_{L_{x}^{\frac{5k}{4-2s_{1}}}L_{t}^{\frac{5k}{2-6s_{1}}}}\right)
N_{k+1}^{2s_{1}}N_{k+1}^{\sigma}\left\|v_{k+1}\right\|_{L_{x}^{\frac{5}{1-3s_{1}}}L_{t}^{\frac{10}{1+12s_{1}}}}\nonumber\\
&\leq
C\left(\prod_{m=1}^{\tilde{i}}\|z_{m}\|_{L_{x}^{\frac{5k}{4-2s_{1}}}L_{t}^{\frac{5k}{2-6s_{1}}}}\right)
\nonumber\\
&\quad\times\left(\prod_{m=\tilde{i}+1}^{j}\|z_{m}\|_{L_{x}^{\frac{5k}{4-2s_{1}}}L_{t}^{\frac{5k}{2-6s_{1}}}}\right)
\left(\prod_{n=j+1}^{k}\|v_{n}\|_{L_{x}^{\frac{5k}{4-2s_{1}}}L_{t}^{\frac{5k}{2-6s_{1}}}}\right)
\left\|D^{2s_{1}}\langle D\rangle^{\sigma}v_{k+1}\right\|_{L_{x}^{\frac{5}{1-3s_{1}}}L_{t}^{\frac{10}{1+12s_{1}}}}\nonumber\\
&\leq C\left(\prod_{m=1}^{\tilde{i}}N_{m}^{\frac{1}{2}-\frac{2}{k}+\frac{4s_{1}}{k}}\left\|D^{\frac{20-5k-60s_{1}}{20k}}\langle D\rangle^{s} z_{m}\right\|_{L_{x}^{\frac{20k}{5k-4+12s_{1}}}L_{t}^{\frac{5k}{2-6s_{1}}}}\right)\nonumber\\
&\quad\times\left(\prod_{n=j+1}^{k}N_{n}^{\frac{1}{2}-\frac{2}{k}+\frac{4s_{1}}{k}}\|v_{n}\|_{X^{0,b}}\right)
N_{k+1}^{-s_{1}}\|v_{k+1}\|_{X^{\sigma,b}}\nonumber\\
&\quad\times\prod_{m=\tilde{i}+1}^{j}
\left\|D^{-\frac{2s_{1}}{k}+\frac{(k-4+4s_{1})a}{2k}}\langle D\rangle^{s}\langle D\rangle^{-s} D^{\frac{2s_{1}}{k}-\frac{(k-4+4s_{1})a}{2k}}z_{m}\right\|
_{L_{x}^{\frac{5k}{4-2s_{1}}}L_{t}^{\frac{5k}{2-6s_{1}}}}\nonumber\\
&\leq C\left(\prod_{m=1}^{\tilde{i}}N_{m}^{\frac{1}{2}-\frac{2}{k}+\frac{4s_{1}}{k}}\left\|D^{\frac{20-5k-60s_{1}}{20k}}\langle D\rangle^{s} z_{m}\right\|_{L_{x}^{\frac{20k}{5k-4+12s_{1}}}L_{t}^{\frac{5k}{2-6s_{1}}}}\right)\nonumber\\
&\quad\times\left(\prod_{n=j+1}^{k}N_{n}^{\frac{1}{2}-\frac{2}{k}+\frac{4s_{1}}{k}}\|v_{n}\|_{X^{0,b}}\right)
N_{k+1}^{-s_{1}}\|v_{k+1}\|_{X^{\sigma,b}}\nonumber\\
&\quad\times\prod_{m=\tilde{i}+1}^{j}N_{m}^{-s+\frac{2s_{1}}{k}-\frac{(k-4+4s_{1})a}{2k}}
\left\| D^{-\frac{2s_{1}}{k}+\frac{(k-4+4s_{1})a}{2k}}\langle D\rangle^{s}z_{m}\right\|
_{L_{x}^{\frac{5k}{4-2s_{1}}}L_{t}^{\frac{5k}{2-6s_{1}}}}.\label{3.018}
\end{align}
By using \eqref{3.018},  H\"{o}lder inequality and $a\in\mathbf{N}$ with
\begin{eqnarray*}
&&a> -\frac{2(ks-2s_{1})}{k-4+4s_{1}},
\end{eqnarray*}
it follows that the powers of $N_{m},\,m\geq \tilde{i}+1$ in \eqref{3.018} are negative. Since $N_{m}\geq1,\,m\geq \tilde{i}+1$, we have
\begin{align}
\sum\left\|D^{2s_{1}}\langle D\rangle^{\sigma}\left(\prod_{m=1}^{j}z_{m}\prod_{n=j+1}^{k+1}v_{n}\right)\right\|_{L_{x}^{\frac{1}{1-s_{1}}}L_{t}^{2}}
&\leq C\|z\|_{Y(\tiny\bf{R})}^{j}\|v\|_{X^{\sigma,b}}^{k+1-j}\nonumber\\
&\leq C\left(\|z\|_{Y(\tiny\bf{R})}^{k+1}+\|v\|_{X^{\sigma,b}}^{k+1}\right).\label{3.019}
\end{align}

\noindent{\bf Case 3B:} $N_{j}> N_{k+1}$, we consider $N_{j}>1$ and $N_{j}\leq1$, respectively.

When $N_{j}\leq 1$,
by using \eqref{2.041} and H\"{o}lder inequality, Lemmas 2.11, 2.12 and a proof similar to \eqref{3.05}, we have
\begin{align}
&\quad\left\|D^{2s_{1}}\langle D\rangle^{\sigma}\left(\prod_{m=1}^{j}z_{m}\prod_{n=j+1}^{k+1}v_{n}\right)\right\|_{L_{x}^{\frac{1}{1-s_{1}}}L_{t}^{2}}
\leq N_{j}^{2s_{1}}\left\|\left(\prod_{m=1}^{j}z_{m}\prod_{n=j+1}^{k+1}v_{n}\right)\right\|_{L_{x}^{\frac{1}{1-s_{1}}}L_{t}^{2}}\nonumber\\
&\leq
C\left(\prod_{m=1}^{j-1}\|z_{m}\|_{L_{x}^{\frac{5k}{4-3s_{1}}}L_{t}^{\frac{5k}{2-4s_{1}}}}\right)
\left(\prod_{n=j+1}^{k+1}\|v_{n}\|_{L_{x}^{\frac{5k}{4-3s_{1}}}L_{t}^{\frac{5k}{2-4s_{1}}}}\right)
N_{j}^{2s_{1}}\left\|z_{j}\right\|_{L_{x}^{\frac{5}{1-2s_{1}}}L_{t}^{\frac{10}{1+8s_{1}}}}\nonumber\\
&\leq
C\left(\prod_{m=1}^{j-1}\|z_{m}\|_{L_{x}^{\frac{5k}{4-3s_{1}}}L_{t}^{\frac{5k}{2-4s_{1}}}}\right)
\nonumber\\
&\quad\times\left(\prod_{n=j+1}^{k+1}\|v_{n}\|_{L_{x}^{\frac{5k}{4-3s_{1}}}L_{t}^{\frac{5k}{2-4s_{1}}}}\right)
\left\|D^{2s_{1}}\langle D\rangle^{s}z_{j}\right\|_{L_{x}^{\frac{5}{1-2s_{1}}}L_{t}^{\frac{10}{1+8s_{1}}}}\nonumber\\
&\leq C\left(\prod_{m=1}^{j-1}N_{m}^{\frac{1}{2}-\frac{2}{k}+\frac{3s_{1}}{5k}}\left\|D^{\frac{4-k-8s_{1}}{4k}}\langle D\rangle^{s}z_{m}\right\|
_{L_{x}^{\frac{20k}{5k-4(1-2s_{1})}}L_{t}^{\frac{5k}{2-4s_{1}}}}\right)
\left(\prod_{n=j+1}^{k+1}N_{n}^{\frac{1}{2}-\frac{2}{k}+\frac{3s_{1}}{5k}}\|v_{n}\|_{X^{0,b}}\right)\nonumber\\
&\quad\times\left\|D^{2s_{1}}\langle D\rangle^{s} z_{j}\right\|_{L_{x}^{\frac{5}{1-2s_{1}}}L_{t}^{\frac{10}{1+8s_{1}}}}.\label{3.020}
\end{align}
By using \eqref{3.020} and H\"{o}lder inequality,, we have
\begin{align}
\sum\left\|D^{2s_{1}}\langle D\rangle^{\sigma}\left(\prod_{m=1}^{j}z_{m}\prod_{n=j+1}^{k+1}v_{n}\right)\right\|_{L_{x}^{\frac{1}{1-s_{1}}}L_{t}^{2}}
&\leq C\|z\|_{Y(\tiny\bf{R})}^{j}\|v\|_{X^{\sigma,b}}^{k+1-j}\nonumber\\
&\leq C\left(\|z\|_{Y(\tiny\bf{R})}^{k+1}+\|v\|_{X^{\sigma,b}}^{k+1}\right).\label{3.021}
\end{align}

When $N_{j}> 1$, we consider $j=1$ and $j\geq2$, respectively.

For $j=1$, that is $N_{1}>1$, by using \eqref{2.041} and H\"{o}lder inequality, Lemmas 2.11, 2.12 as well as a proof similar to \eqref{3.012}, we have
\begin{align}
&\quad\left\|D^{2s_{1}}\langle D\rangle^{\sigma}\left(z_{1}\prod_{n=2}^{k+1}v_{n}\right)\right\|_{L_{x}^{\frac{1}{1-s_{1}}}L_{t}^{2}}
\leq N_{1}^{2s_{1}}N_{1}^{\sigma}\left\|\left(z_{1}\prod_{n=2}^{k+1}v_{n}\right)\right\|_{L_{x}^{\frac{1}{1-s_{1}}}L_{t}^{2}}\nonumber\\
&\leq
C\left(\prod_{n=2}^{k+1}\|v_{n}\|_{L_{x}^{\frac{5k}{4(1+s_{1})}}L_{t}^{\frac{5k}{2(1+s_{1})}}}\right)
N_{1}^{2s_{1}}N_{1}^{\sigma}\left\|z_{1}\right\|_{L_{x}^{\frac{5}{1-9s_{1}}}L_{t}^{\frac{10}{1-4s_{1}}}}\nonumber\\
&\leq
C\left(\prod_{n=2}^{k+1}\|v_{n}\|_{L_{x}^{\frac{5k}{4(1+s_{1})}}L_{t}^{\frac{5k}{2(1+s_{1})}}}\right)
\left\|D^{2s_{1}}\langle D\rangle^{\sigma}z_{1}\right\|_{L_{x}^{\frac{5}{1-9s_{1}}}L_{t}^{\frac{10}{1-4s_{1}}}}\nonumber\\
&\leq C\left(\prod_{n=2}^{k+1}\|v_{n}\|_{L_{x}^{\frac{5k}{4(1+s_{1})}}L_{t}^{\frac{5k}{2(1+s_{1})}}}\right)\nonumber\\
&\quad\times \left\|D^{(4a+1)s_{1}}\langle D\rangle^{s}\langle D\rangle^{-s} D^{-(4a+1)s_{1}}D^{2s_{1}}\langle D\rangle^{\sigma}z_{1}\right\|_{L_{x}^{\frac{5}{1-9s_{1}}}L_{t}^{\frac{10}{1-4s_{1}}}}\nonumber\\
&\leq C\left(\prod_{n=2}^{k+1}N_{n}^{\frac{1}{2}-\frac{2}{k}-\frac{2s_{1}}{k}}\|v_{n}\|_{X^{0,b}}\right)\nonumber\\
&\quad\times N_{1}^{2s_{1}+\sigma-s-(4a+1)s_{1}}\left\|D^{(4a+1)s_{1}}\langle D\rangle^{s}z_{1}\right\|_{L_{x}^{\frac{5}{1-9s_{1}}}L_{t}^{\frac{10}{1-4s_{1}}}}
.\label{3.022}
\end{align}
By using \eqref{3.022}, H\"{o}lder inequality, and $a\in\mathbf{N}$ with
\begin{eqnarray*}
&&a>\frac{\sigma-s}{4s_{1}}+\frac{1}{4},
\end{eqnarray*}
it follows that the power of $N_{1}$ in \eqref{3.022} is negative. Since $N_{1}>1$, we have
\begin{align}
\sum\left\|D^{2s_{1}}\langle D\rangle^{\sigma}\left(z_{1}\prod_{n=j+1}^{k+1}v_{n}\right)\right\|_{L_{x}^{\frac{1}{1-s_{1}}}L_{t}^{2}}
&\leq C\|z\|_{Y(\tiny\bf{R})}\|v\|_{X^{\sigma,b}}^{k}\nonumber\\
&\leq C\left(\|z\|_{Y(\tiny\bf{R})}^{k+1}+\|v\|_{X^{\sigma,b}}^{k+1}\right).\label{3.023}
\end{align}
For $j\geq2$, $N_{j}>1$,  there exists a $\tilde{j}$, such that $N_{\tilde{j}}\leq1$ and $N_{\tilde{j}+1}>1$. By using \eqref{2.041} and H\"{o}lder inequality, Lemmas 2.11, 2.12 as well as a proof similar to \eqref{3.012}, we have
\begin{align}
&\quad\left\|D^{2s_{1}}\langle D\rangle^{\sigma}\left(\prod_{m=1}^{j}z_{m}\prod_{n=j+1}^{k+1}v_{n}\right)\right\|_{L_{x}^{\frac{1}{1-s_{1}}}L_{t}^{2}}
\leq N_{j}^{2s_{1}}N_{j}^{\sigma}
\left\|\left(\prod_{m=1}^{j}z_{m}\prod_{n=j+1}^{k+1}v_{n}\right)\right\|_{L_{x}^{\frac{1}{1-s_{1}}}L_{t}^{2}}\nonumber\\
&\leq
C\left(\prod_{m=1}^{\tilde{j}}\|z_{m}\|_{L_{x}^{\frac{5k}{4(1+s_{1})}}L_{t}^{\frac{5k}{2(1+s_{1})}}}\right)
\left(\prod_{m=\tilde{j}+1}^{j-1}\|z_{m}\|_{L_{x}^{\frac{5k}{4(1+s_{1})}}L_{t}^{\frac{5k}{2(1+s_{1})}}}\right)\nonumber\\
&\quad\times\left(\prod_{n=j+1}^{k+1}\|v_{n}\|_{L_{x}^{\frac{5k}{4(1+s_{1})}}L_{t}^{\frac{5k}{2(1+s_{1})}}}\right)
N_{j}^{2s_{1}}N_{j}^{\sigma}\left\|z_{j}\right\|_{L_{x}^{\frac{5}{1-9s_{1}}}L_{t}^{\frac{10}{1-4s_{1}}}}\nonumber\\
&\leq
C\left(\prod_{m=1}^{\tilde{j}}\|z_{m}\|_{L_{x}^{\frac{5k}{4(1+s_{1})}}L_{t}^{\frac{5k}{2(1+s_{1})}}}\right)
\left(\prod_{m=\tilde{j}+1}^{j-1}\|z_{m}\|_{L_{x}^{\frac{5k}{4(1+s_{1})}}L_{t}^{\frac{5k}{2(1+s_{1})}}}\right)\nonumber\\
&\quad\times\left(\prod_{n=j+1}^{k+1}\|v_{n}\|_{L_{x}^{\frac{5k}{4(1+s_{1})}}L_{t}^{\frac{5k}{2(1+s_{1})}}}\right)
\left\|D^{2s_{1}}\langle D\rangle^{\sigma}z_{j}\right\|_{L_{x}^{\frac{5}{1-9s_{1}}}L_{t}^{\frac{10}{1-4s_{1}}}}\nonumber\\
&\leq C\left(\prod_{m=1}^{\tilde{j}}N_{m}^{\frac{1}{2}-\frac{2}{k}-\frac{2s_{1}}{k}}\left\|D^{\frac{4(1+s_{1})-k}{4k}}\langle D\rangle^{s} z_{m}\right\|_{L_{x}^{\frac{20k}{5k-4(1+s_{1})}}L_{t}^{\frac{5k}{2(1+s_{1})}}}\right)
\left(\prod_{n=j+1}^{k+1}N_{n}^{\frac{1}{2}-\frac{2}{k}-\frac{2s_{1}}{k}}\|v_{n}\|_{X^{0,b}}\right)\nonumber\\
&\quad\times\left(\prod_{m=\tilde{j}+1}^{j-1}\left\|D^{\frac{a(k-4(1+s_{1}))}{2k}}\langle D\rangle^{s} \langle D\rangle^{-s}D^{-\frac{a(k-4(1+s_{1}))}{2k}} z_{m}\right\|_{L_{x}^{\frac{5k}{4(1+s_{1})}}L_{t}^{\frac{5k}{2(1+s_{1})}}}\right)\nonumber\\
&\quad\times
\left\|D^{(4a+1)s_{1}}\langle D\rangle^{s}\langle D\rangle^{-s}D^{-(4a+1)s_{1}}D^{2s_{1}}\langle D\rangle^{\sigma}z_{j}\right\|_{L_{x}^{\frac{5}{1-9s_{1}}}L_{t}^{\frac{10}{1-4s_{1}}}}\nonumber\\
&\leq C\left(\prod_{m=1}^{\tilde{j}}N_{m}^{\frac{1}{2}-\frac{2}{k}-\frac{2s_{1}}{k}}\left\|D^{\frac{4(1+s_{1})-k}{4k}}\langle D\rangle^{s} z_{m}\right\|_{L_{x}^{\frac{20k}{5k-4(1+s_{1})}}L_{t}^{\frac{5k}{2(1+s_{1})}}}\right)
\left(\prod_{n=j+1}^{k+1}N_{n}^{\frac{1}{2}-\frac{2}{k}-\frac{2s_{1}}{k}}\|v_{n}\|_{X^{0,b}}\right)\nonumber\\
&\quad\times\left(\prod_{m=\tilde{j}+1}^{j-1}N_{m}^{-s-\frac{a(k-4(1+s_{1}))}{2k}}\left\|D^{\frac{a(k-4(1+s_{1}))}{2k}}\langle D\rangle^{s} z_{m}\right\|_{L_{x}^{\frac{5k}{4(1+s_{1})}}L_{t}^{\frac{5k}{2(1+s_{1})}}}\right)\nonumber\\
&\quad\times N_{j}^{2s_{1}+\sigma-s-(4a+1)s_{1}}\left\|D^{(4a+1)s_{1}}\langle D\rangle^{s}z_{j}\right\|_{L_{x}^{\frac{5}{1-9s_{1}}}L_{t}^{\frac{10}{1-4s_{1}}}}.\label{3.024}
\end{align}
By using \eqref{3.024}, H\"{o}lder inequality and $a\in\mathbf{N}$ with
\begin{eqnarray*}
&&a>\max\left\{ -\frac{2ks}{k-4-4s_{1}},\,\frac{\sigma-s}{4s_{1}}+\frac{1}{4}\right\},
\end{eqnarray*}
it follows that the powers of $N_{m},\,m\geq\tilde{j}$ in \eqref{3.024} are negative. Since $N_{m}>1,\,m\geq\tilde{j}$, we have
\begin{align}
\sum\left\|D^{2s_{1}}\langle D\rangle^{\sigma}\left(\prod_{m=1}^{j}z_{m}\prod_{n=j+1}^{k+1}v_{n}\right)\right\|_{L_{x}^{\frac{1}{1-s_{1}}}L_{t}^{2}}
&\leq C\|z\|_{Y(\tiny\bf{R})}^{j}\|v\|_{X^{\sigma,b}}^{k+1-j}\nonumber\\
&\leq C\left(\|z\|_{Y(\tiny\bf{R})}^{k+1}+\|v\|_{X^{\sigma,b}}^{k+1}\right).\label{3.025}
\end{align}

The proof of Proposition 3.1 is completed.

\begin{Lemma}\label{lem3.1}
Let $f(x)\in H^{s}(\R)$. Its randomization $f^{\omega}(x)$ is defined by \eqref{1.03}. For any $\lambda>0$, there exist constants $C_{1}, c>0$,  such that
\begin{eqnarray}
&&\mathbb{P}\left(\left\{\omega: \left\|U(t)f^{\omega}\right\|_{Y(\tiny\bf{R})} > \lambda\right\}\right) \leq C_{1}\exp\left(-c\frac{\lambda^{2}}{\|f\|_{H^{s}}^{2}}\right).\label{3.026}
\end{eqnarray}
In particular, for almost every $\omega\in\Omega$, we have
\begin{eqnarray*}
&&\left\|U(t)f^{\omega}\right\|_{Y(\tiny\bf{R})}<\infty.
\end{eqnarray*}

\end{Lemma}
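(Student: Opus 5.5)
The plan is to prove the moment bound
\[
\left\|\|U(t)f^{\omega}\|_{Y(\SR)}\right\|_{L^{p_{1}}_{\omega}}\leq C\sqrt{p_{1}}\,\|f\|_{H^{s}}
\]
for all $p_{1}$ larger than a fixed threshold $p_{0}$. The tail estimate \eqref{3.026} then follows directly from Lemma 2.5 with $M=\|f\|_{H^{s}}$. Letting $\lambda\to\infty$ shows $\mathbb{P}(\|U(t)f^{\omega}\|_{Y}=\infty)=0$, which is the almost sure finiteness.

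The $Y_{N}$ norm is a sum of eight terms of the form $\|D^{\beta_{\ell}}\langle D\rangle^{s}P_{N}U(t)f^{\omega}\|_{L_{x}^{q_{\ell}}L_{t}^{r_{\ell}}}$, $\ell=1,\dots,8$. So it suffices to treat each term separately. Take $p_{1}\geq \max_{\ell}\{q_{\ell},r_{\ell},2\}$; every exponent is finite because $0<s_{1}<\frac19$. By Minkowski's inequality in $\ell^{2}_{N}$ and in the mixed norms, it suffices to bound $\|D^{\beta_{\ell}}\langle D\rangle^{s}P_{N}U(t)f^{\omega}\|_{L_{\omega}^{p_{1}}L_{x}^{q_{\ell}}L_{t}^{r_{\ell}}}$ by $C\sqrt{p_{1}}\|P_{N}f\|_{H^{s}}$ (up to harmless neighbouring dyadic pieces) and then sum in $N$. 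For each fixed $N$, I would follow the proof of Proposition 2.1. First put the $L^{p_{1}}_{\omega}$ norm inside. Then apply Lemma 2.4 to the sum over $k$ of $g_{k}(\omega)\,D^{\beta_{\ell}}\langle D\rangle^{s}P_{N}U(t)\psi_{k}(D)f$. Finally use Minkowski again to reduce to
\[
\Big(\sum_{k}\|D^{\beta_{\ell}}\langle D\rangle^{s}P_{N}U(t)\psi_{k}(D)f\|_{L_{x}^{q_{\ell}}L_{t}^{r_{\ell}}}^{2}\Big)^{1/2}.
\]
Only the $k$ with $\supp\psi_{k}$ meeting $\{|\xi|\sim N\}$ contribute. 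On each such piece, $\langle D\rangle^{s}$ acts like $N^{s}$: this is justified by Lemma 2.12, or simply by absorbing $\langle D\rangle^{s}$ into $f$, since $\langle D\rangle^{s}$ commutes with $U(t)$ and with $\psi_{k}(D)$.

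The core step is a deterministic estimate for a single block:
\[
\|D^{\beta_{\ell}}U(t)\psi_{k}(D)g\|_{L_{x}^{q_{\ell}}L_{t}^{r_{\ell}}}\leq C\|\psi_{k}(D)g\|_{L^{2}}.
\]
I would obtain it from the interpolation in Proposition 2.1, i.e. the estimate \eqref{2.037}, between the smoothing estimate \eqref{2.01} and the randomized maximal estimate \eqref{2.025} of Lemma 2.3. This gives $s_{0}=\frac{2}{r}-\frac{1}{q}+\frac{a(p-4)}{2q}$ with $p=q(1-\frac{2}{r})$. For each $\ell$ I would check that $\beta_{\ell}\leq s_{0}(q_{\ell},r_{\ell})$.
\begin{itemize}
\item For $\ell=4$, $(q,r)=(\frac{k+1}{1-s_{1}},2(k+1))$ gives $p=\frac{k}{1-s_{1}}$ and $s_{0}=\frac{2s_{1}+a(k-4+4s_{1})}{2(k+1)}$, which is exactly $\beta_{4}$.
\item The same computation should produce $\beta_{5},\beta_{7},\beta_{8}$.
\item For the terms with no $a$-dependence ($\ell=1,2,3,6$), the pair $(q,r)$ appears to satisfy the admissibility relation $\frac{2}{q}+\frac{1}{r}=\frac12$, with $\beta_{\ell}=\frac2r-\frac1q$. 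These should follow directly from \eqref{2.03}, with no randomization gain needed. For example, $\ell=2$ with $(q,r)=(\frac{5}{1-2s_{1}},\frac{10}{1+8s_{1}})$ gives $\frac{2}{r}-\frac{1}{q}=2s_{1}=\beta_{2}$.
\end{itemize}
If some $\beta_{\ell}<s_{0}$, the block is frequency-localized at $|\xi|\sim N\geq1$, so the excess derivative can be discarded for $N\geq1$. For $N<1$ one uses the exact exponent from the $a$-independent branch instead.

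Summing, $\sum_{k}\|\psi_{k}(D)\langle D\rangle^{s}P_{N}f\|_{L^{2}}^{2}\lesssim\|\tilde P_{N}f\|_{H^{s}}^{2}$ by almost-orthogonality of the $\psi_{k}$, which have bounded overlap. Summing in $N$ then gives $\|f\|_{H^{s}}^{2}$. The main obstacle is the exponent bookkeeping: verifying for each of the eight terms that $q_{\ell}>4$ where needed, that $p=q(1-\frac{2}{r})>4$, and that the derivative index matches, or is dominated by, the interpolated $s_{0}$. Some terms (e.g. $\ell=5$ with $q=r$-type exponents) may require a different interpolation pair, such as \eqref{2.025} with \eqref{2.03}; I would try the Proposition 2.1 pair first and adjust only where the arithmetic fails.
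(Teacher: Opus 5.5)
Your proposal is correct and follows essentially the paper's proof. The paper also splits $\|U(t)f^{\omega}\|_{Y_N}$ into the eight terms $I_1,\dots,I_8$, handles $I_1,I_2,I_3,I_6$ by \eqref{2.03} (applied deterministically before Lemma 2.4, a harmless reordering of your argument) and $I_4,I_5,I_7,I_8$ by Proposition 2.1, then sums in $N$ via Minkowski and concludes with Lemma 2.5. All eight derivative indices match the corresponding $\frac{2}{r}-\frac{1}{q}$ or $s_0$ exactly (e.g.\ $\ell=5$ fits Proposition 2.1 with $r=2q$ and $p=\frac{5k-4(1+s_1)}{4(1+s_1)}>4$), so none of your fallback adjustments are needed.
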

\noindent{\bf Proof.}
We take $r\geq2$.
Noting that
\begin{eqnarray*}
&&\left\|U(t)f^{\omega}\right\|_{Y(\SR)}=\left\|\left\|U(t)f^{\omega}\right\|_{Y_{N}(\SR)}\right\|_{\ell_{N}^{2}},\\
&&\left\|U(t)f^{\omega}\right\|_{Y_{N}(\SR)}=\left\|D^{\frac{4-k-8s_{1}}{4k}}\langle D\rangle^{s}P_{N}U(t)f^{\omega}\right\|_{L_{x}^{\frac{20k}{5k-4(1-2s_{1})}}L_{t}^{\frac{5k}{2-4s_{1}}}}\\
&&\quad+\left\|D^{2s_{1}}\langle D\rangle^{s} P_{N}U(t)f^{\omega}\right\|_{L_{x}^{\frac{5}{1-2s_{1}}}L_{t}^{\frac{10}{1+8s_{1}}}}\\
&&\quad+\left\|D^{\frac{4(1+s_{1})-k}{4k}}\langle D\rangle^{s}P_{N}U(t)f^{\omega}\right\|_{L_{x}^{\frac{20k}{5k-4(1+s_{1})}}L_{t}^{\frac{5k}{2(1+s_{1})}}}\\
&&\quad+\left\|D^{\frac{2s_{1}+a(k-4(1-s_{1}))}{2(k+1)}} \langle D\rangle^{s}P_{N} U(t)f^{\omega}\right\|_{L_{x}^{\frac{k+1}{1-s_{1}}}L_{t}^{2(k+1)}}\\
&&\quad+\left\|D^{\frac{a(k-4(1+s_{1}))}{2k}}\langle D\rangle^{s}P_{N}U(t)f^{\omega}\right\|_{L_{x}^{\frac{5k}{4(1+s_{1})}}L_{t}^{\frac{5k}{2(1+s_{1})}}}\\
&&\quad+\left\|D^{\frac{4-k-12s_{1}}{4k}}\langle D\rangle^{s}P_{N}U(t)f^{\omega}\right\|_{L_{x}^{\frac{20k}{5k-4+12s_{1}}}L_{t}^{\frac{5k}{2-6s_{1}}}}\\
&&\quad+\left\|D^{(4a+1)s_{1}}\langle D\rangle^{s}P_{N}U(t)f^{\omega}\right\|_{L_{x}^{\frac{5}{1-9s_{1}}}L_{t}^{\frac{10}{1-4s_{1}}}}\\
&&\quad+\left\| D^{-\frac{2s_{1}}{k}+\frac{(k-4+4s_{1})a}{2k}}\langle D\rangle^{s}P_{N}U(t)f^{\omega}\right\|
_{L_{x}^{\frac{5k}{4-2s_{1}}}L_{t}^{\frac{5k}{2-6s_{1}}}}\\
&&=\sum_{i=1}^{8}I_{i}.
\end{eqnarray*}
For $I_{1}$, by using Lemma 2.4, Minkowski inequality and \eqref{2.03}, we have
\begin{eqnarray}
&&\|I_{1}\|_{L_{\omega}^{r}}=\left\|\left\|D^{\frac{4-k-8s_{1}}{4k}}\langle D\rangle^{s}P_{N}U(t)f^{\omega}\right\|_{L_{x}^{\frac{20k}{5k-4(1-2s_{1})}}L_{t}^{\frac{5k}{2-4s_{1}}}}\right\|_{L_{\omega}^{r}}\nonumber\\
&&\leq C\left\|\left\|\langle D\rangle^{s}P_{N}f^{\omega}\right\|_{L_{x}^{2}}\right\|_{L_{\omega}^{r}}\leq C\left\|\left\|\langle D\rangle^{s}P_{N}f^{\omega}\right\|_{L_{\omega}^{r}}\right\|_{L_{x}^{2}}\nonumber\\
&&\leq C\sqrt{r}\left\|\left\|\langle D\rangle^{s}P_{N}\psi_{k}(D)f\right\|_{\ell_{k}^{2}}\right\|_{L_{x}^{2}}\nonumber\\
&&\leq C\sqrt{r}\left\|\langle D\rangle^{s}P_{N}f\right\|_{L_{x}^{2}}.\label{3.027}
\end{eqnarray}
For $I_{i}$, $i=2, 3, 6$,  using \eqref{2.03}, Lemma 2.4, and a proof similar to that of \eqref{3.027}, we obtain
\begin{eqnarray}
&&\|I_{i}\|_{L_{\omega}^{r}}\leq C\sqrt{r}\left\|\langle D\rangle^{s}P_{N}f\right\|_{L_{x}^{2}}.\label{3.028}
\end{eqnarray}
For $I_{4}$, noting that
\begin{eqnarray*}
&&\frac{k+1}{1-s_{1}}\left(1-\frac{2}{2(k+1)}\right)=\frac{k}{1-s_{1}}>4,\\
&&\frac{2}{2(k+1)}-\frac{1-s_{1}}{k+1}+\frac{a(1-s_{1})}{2(k+1)}\left(\frac{k+1}{1-s_{1}}\left(1-\frac{2}{2(k+1)}\right)-4\right)\\
&&=\frac{2s_{1}+a(k-4(1-s_{1}))}{2(k+1)},
\end{eqnarray*}
by using Proposition 2.1, we have
\begin{eqnarray}
&&\|I_{4}\|_{L_{\omega}^{r}}\leq C\sqrt{r}\left\|\langle D\rangle^{s}P_{N}f\right\|_{L_{x}^{2}}.\label{3.029}
\end{eqnarray}
For $I_{5}$, noting that
\begin{eqnarray*}
&&\frac{5k}{4(1+s_{1})}\left(1-\frac{4(1+s_{1})}{5k}\right)=\frac{5k-4(1+s_{1})}{4(1+s_{1})}>4,\\
&&\frac{4(1+s_{1})}{5k}-\frac{4(1+s_{1})}{5k}+\frac{2a(1+s_{1})}{5k}\left(\frac{5k}{4(1+s_{1})}\left(1-\frac{4(1+s_{1})}{5k}\right)-4\right)\\
&&=\frac{a(k-4(1+s_{1}))}{2k},
\end{eqnarray*}
by using Proposition 2.1, we have
\begin{eqnarray}
&&\|I_{5}\|_{L_{\omega}^{r}}\leq C\sqrt{r}\left\|\langle D\rangle^{s}P_{N}f\right\|_{L_{x}^{2}}.\label{3.030}
\end{eqnarray}
For $I_{7}$, noting that
\begin{eqnarray*}
&&\frac{5}{1-9s_{1}}\left(1-\frac{2(1-4s_{1})}{10}\right)=\frac{4(1+s_{1})}{1-9s_{1}}>4,\\
&&\frac{2(1-4s_{1})}{10}-\frac{1-9s_{1}}{5}+\frac{a(1-9s_{1})}{10}\left(\frac{5}{1-9s_{1}}\left(1-\frac{2(1-4s_{1})}{10}\right)-4\right)\\
&&=(4a+1)s_{1},
\end{eqnarray*}
by using Proposition 2.1, we have
\begin{eqnarray}
&&\|I_{7}\|_{L_{\omega}^{r}}\leq C\sqrt{r}\left\|\langle D\rangle^{s}P_{N}f\right\|_{L_{x}^{2}}.\label{3.031}
\end{eqnarray}
For $I_{8}$, noting that
\begin{eqnarray*}
&&\frac{5k}{4-2s_{1}}\left(1-\frac{2(2-6s_{1})}{5k}\right)=\frac{5k-4+12s_{1}}{4-2s_{1}}>4,\\
&&\frac{2(2-6s_{1})}{5k}-\frac{4-2s_{1}}{5k}+\frac{(2-s_{1})a}{5k}\left(\frac{5k}{4-2s_{1}}\left(1-\frac{2(2-6s_{1})}{5k}\right)-4\right)\\
&&=-\frac{2s_{1}}{k}+\frac{(k-4+4s_{1})a}{2k},
\end{eqnarray*}
by using Proposition 2.1, we have
\begin{eqnarray}
&&\|I_{8}\|_{L_{\omega}^{r}}\leq C\sqrt{r}\left\|\langle D\rangle^{s}P_{N}f\right\|_{L_{x}^{2}}.\label{3.032}
\end{eqnarray}
Finally, combining the estimates from \eqref{3.027} to \eqref{3.032}, we obtain
\begin{eqnarray}
&&\left\|\left\|U(t)f^{\omega}\right\|_{Y_{N}(\SR)}\right\|_{L_{\omega}^{r}} \leq \sum_{i=1}^{8}\|I_{i}\|_{L_{\omega}^{r}}\nonumber\\
&&\leq C\sqrt{r}\left\|\langle D\rangle^{s}P_{N}f\right\|_{L_{x}^{2}}.\label{3.033}
\end{eqnarray}
By using Minkowski inequality and \eqref{3.033}, we have
\begin{eqnarray}
&&\left\|\left\|U(t)f^{\omega}\right\|_{Y(\SR)}\right\|_{L_{\omega}^{r}}=\left\|\left\|\left\|U(t)f^{\omega}\right\|_{Y_{N}(\SR)}\right\|
_{\ell_{N}^{2}}\right\|_{L_{\omega}^{r}}\nonumber\\
&&\leq C\left\|\left\|\left\|U(t)f^{\omega}\right\|_{Y_{N}(\SR)}\right\|_{L_{\omega}^{r}}
\right\|_{\ell_{N}^{2}}\nonumber\\
&&\leq C\sqrt{r}\left\|\left\|\langle D\rangle^{s}P_{N}f\right\|_{L_{x}^{2}}\right\|_{\ell_{N}^{2}}\nonumber\\
&&\leq C\sqrt{r}\left\|f\right\|_{H^{s}}.\label{3.034}
\end{eqnarray}
By using \eqref{3.034} and Lemma 2.5, we have that \eqref{3.026} is valid.

\noindent We denote
\begin{eqnarray}
&&\Omega=\left\{\omega: \left\|U(t)f^{\omega}\right\|_{Y(\tiny\bf{R})}<\infty\right\}.\label{3.035}
\end{eqnarray}
From \eqref{3.035}, we have
\begin{eqnarray*}
&&\Omega=\bigcup_{n=1}^{\infty}\left\{\omega: \left\|U(t)f^{\omega}\right\|_{Y(\tiny\bf{R})}\leq n\right\}.
\end{eqnarray*}
Its complement is given by
\begin{eqnarray}
&&\Omega^{c}=\bigcap_{n=1}^{\infty}\left\{\omega: \left\|U(t)f^{\omega}\right\|_{Y(\tiny\bf{R})}> n\right\}.\label{3.036}
\end{eqnarray}
Now, combining \eqref{3.036} with  \eqref{3.026}, we obtain
\begin{eqnarray*}
&&0\leq \mathbb{P}\left(\Omega^{c}\right)=\lim_{n\rightarrow\infty}\mathbb{P}\left(\left\{\omega: \left\|U(t)f^{\omega}\right\|_{Y(\tiny\bf{R})}> n\right\}\right)\nonumber\\
&&\leq C_{1}\lim_{n\rightarrow\infty}\exp\left(-c\frac{n^{2}}{\|f\|_{H^{s}}^{2}}\right)=0.
\end{eqnarray*}
Therefore, it follows that
\begin{eqnarray}
&&\mathbb{P}(\Omega)=1-\mathbb{P}\left(\Omega^{c}\right)=1.\label{3.037}
\end{eqnarray}

The proof of Lemma 3.1 is completed.

\bigskip

\bigskip

\section{Proof of Theorem  1.1}

\setcounter{equation}{0}

 \setcounter{Theorem}{0}

\setcounter{Lemma}{0}

\setcounter{Proposition}{0}

\setcounter{section}{4}

Before proving Theorem 1.1, we establish local well-posedness for
\begin{eqnarray}
&&v_{t}+v_{xxx}+\frac{1}{k+1}\partial_{x}[(v+G)^{k+1}]=0,\label{4.01}\\
&&v(0,x)=v_{0}(x),\label{4.02}
\end{eqnarray}
which plays an important role in proving of Theorem 1.1.
\begin{Theorem}\label{Theorem 4.1}(Local well-posedness.)
Suppose that $k \geq 5$, $0 < s_{1} < \min\left\{\frac{1}{3}, \frac{k}{4} - 1\right\}$, $\sigma > \frac{1}{2} - \frac{2}{k} + \frac{4s_{1}}{k}$, $v_{0} \in H^{\sigma}(\R)$, $0 \in I=[-T, T] \subset \R$, and $G \in Y(\R)$. Then there exists $\vartheta > 0$ such that if
\begin{equation}
\left\|\eta
(t/T)U(t)v_{0}\right\|_{X^{\sigma, \frac{1}{2}+\delta}(\SR \times \SR)} + \|G\|_{Y(I)} \leq \vartheta, \label{4.03}
\end{equation}
then the system \eqref{4.01}-\eqref{4.02} admits a unique solution $u(t, x) \in C(I; H^{\sigma}(\R))$.
\end{Theorem}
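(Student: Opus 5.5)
The plan is a standard contraction argument in the Bourgain space $X^{\sigma,b}$ with $b=\frac{1}{2}+\delta$, using Proposition 3.1 as the key nonlinear estimate. Rewrite \eqref{4.01}-\eqref{4.02} in Duhamel form with time cutoffs:
\begin{eqnarray*}
&&\Phi(v)(t)=\eta(t/T)U(t)v_{0}-\frac{1}{k+1}\eta(t/T)\int_{0}^{t}U(t-\tau)\eta(\tau/T)\partial_{x}\big[(v+G)^{k+1}\big](\tau)d\tau .
\end{eqnarray*}
We seek a fixed point of $\Phi$ in the ball $B=\{v:\|v\|_{X^{\sigma,b}}\leq 2\vartheta\}$. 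Here $G$ is understood through an extension from $I$ to $\R$ whose $Y(\R)$ norm is comparable to $\|G\|_{Y(I)}$. Concretely, $\|G\|_{Y(I)}$ is taken as the infimum over extensions, so Proposition 3.1, which is stated on $\R\times\R$, applies directly.

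\textbf{Step 1: self-mapping.} By \eqref{2.040} of Lemma 2.7 with $b'=b_{1}=-\frac{1}{2}+(k+3)\delta$ (admissible since $-\frac12<b_1\le0\le b\le b_1+1$ for small $\delta$), we have
\begin{eqnarray*}
&&\|\Phi(v)\|_{X^{\sigma,b}}\leq \|\eta(t/T)U(t)v_{0}\|_{X^{\sigma,b}}+CT^{1+b_{1}-b}\|\partial_{x}(v+G)^{k+1}\|_{X^{\sigma,b_{1}}}.
\end{eqnarray*}
Proposition 3.1 then bounds the last factor by $C(\|v\|_{X^{\sigma,b}}^{k+1}+\|G\|_{Y}^{k+1})$. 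Note $1+b_1-b=(k+2)\delta>0$ and $T\le1$. Under \eqref{4.03} this gives $\|\Phi(v)\|_{X^{\sigma,b}}\leq\vartheta+C((2\vartheta)^{k+1}+\vartheta^{k+1})\leq 2\vartheta$ once $\vartheta$ is small. The smallness comes entirely from $\vartheta$, not from $T$.

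\textbf{Step 2: contraction.} We need a difference estimate. Write
\begin{eqnarray*}
&&(v_{1}+G)^{k+1}-(v_{2}+G)^{k+1}=(v_{1}-v_{2})\sum_{l=0}^{k}(v_{1}+G)^{l}(v_{2}+G)^{k-l}.
\end{eqnarray*}
Each term is a $(k+1)$-linear product with exactly one factor $v_{1}-v_{2}$ and the rest $v_{1},v_{2}$ or $G$. The proof of Proposition 3.1 is really a multilinear estimate: Cases 1--3 treat products $\prod z_{m}\prod v_{n}$ factor by factor, with Hölder's inequality and each factor measured in either $X^{\sigma,b}$ or $Y$. So the same case analysis yields
\begin{eqnarray*}
&&\|\partial_{x}[(v_{1}+G)^{k+1}-(v_{2}+G)^{k+1}]\|_{X^{\sigma,b_{1}}}\leq C\|v_{1}-v_{2}\|_{X^{\sigma,b}}\big(\|v_{1}\|_{X^{\sigma,b}}+\|v_{2}\|_{X^{\sigma,b}}+\|G\|_{Y}\big)^{k}.
\end{eqnarray*}
This implies $\|\Phi(v_{1})-\Phi(v_{2})\|_{X^{\sigma,b}}\leq C\vartheta^{k}\|v_{1}-v_{2}\|_{X^{\sigma,b}}\leq\frac12\|v_{1}-v_{2}\|_{X^{\sigma,b}}$ for small $\vartheta$.

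\textbf{Step 3: conclusion.} The Banach fixed point theorem gives a unique solution in $B$. Since $b>\frac12$, we have $X^{\sigma,b}\hookrightarrow C(\R;H^{\sigma})$, so the restriction to $I$ lies in $C(I;H^{\sigma}(\R))$. Uniqueness in the full class is handled in the usual way, via uniqueness in the restriction space $X^{\sigma,b}_{I}$.

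The main obstacle is Step 2: upgrading Proposition 3.1, stated only for the symmetric power $(v+z)^{k+1}$, to its genuinely multilinear form. One must check that every case there uses only factorwise bounds. In particular, the ordering $N_{1}\leq\dots\leq N_{k+1}$ and the placement of the highest-frequency factor must work whether that factor is the difference or not. This should hold because the $X^{\sigma,b}$ factors are treated interchangeably with the $v$'s, but it needs verification in Cases 3A/3B. A secondary technical point is making the localized norm $\|G\|_{Y(I)}$ precise so that Proposition 3.1 applies to an extension of $G$.
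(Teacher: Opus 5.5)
Your proposal is correct and is essentially the paper's proof. Both run a contraction in the ball $\{\|v\|_{X^{\sigma,\frac12+\delta}}\le 2\vartheta\}$, using \eqref{2.040} with $b'=b_1=-\frac12+(k+3)\delta$ (which gains $T^{(k+2)\delta}$) together with Proposition 3.1 for both the self-map and the difference bound; the paper simply asserts the difference estimate \eqref{4.06} from Proposition 3.1 without comment.

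The multilinear upgrade you flag as the main obstacle needs no case-by-case recheck of Cases 3A/3B. Proposition 3.1 gives $\|\partial_x(u^{k+1})\|_{X^{\sigma,b_1}}\le C\|u\|_Z^{k+1}$ with the sum-space norm $\|u\|_Z=\inf_{u=v+z}(\|v\|_{X^{\sigma,b}}+\|z\|_{Y})$. The polarization identity for the symmetric form $(u_1,\dots,u_{k+1})\mapsto\partial_x(u_1\cdots u_{k+1})$, plus homogeneity, then yields $\|\partial_x(u_1\cdots u_{k+1})\|_{X^{\sigma,b_1}}\le C'\prod_{i=1}^{k+1}\|u_i\|_Z$. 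Taking $u_1=v_1-v_2$ and the remaining factors of the form $v_j+G$ gives exactly your contraction estimate.
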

\noindent{\bf Proof.}
We define
\begin{eqnarray*}
&&B=\{v:\|v\|_{X^{\sigma,\frac{1}{2}+\delta}}\leq 2\vartheta\}
\end{eqnarray*}
and
\begin{eqnarray}
&&\Phi(v)=\eta\left(\frac{t}{T}\right)U(t)v_{0}\nonumber\\
&&\quad+\frac{\eta(t/T)}{k+1}\int_{0}^{t}U(t-s)\partial_{x}[(\eta
(s/T)v(s)+\eta
(s/T)G(s))^{k+1}]ds.\label{4.04}
\end{eqnarray}
Next, we prove that $\Phi(v)$ is a contraction mapping in $B$.
On the one hand, by using \eqref{2.040}, \eqref{4.02} and Proposition 3.1, we have
\begin{eqnarray}
&&\|\Phi(v)\|_{X^{\sigma,\frac{1}{2}+\delta}}=\left\|\eta
(t/T)U(t)v_{0}\right\|_{X^{\sigma,\frac{1}{2}+\delta}}\nonumber\\
&&\quad+
\frac{1}{k+1}\left\|\eta
(t/T)\int_{0}^{t}U(t-s)\partial_{x}[(\eta
(s/T)v(s)+\eta
(s/T)G(s))^{k+1}]ds\right\|_{X^{\sigma,\frac{1}{2}+\delta}}\nonumber\\
&&\leq \vartheta+ CT^{(k+2)\delta}\left\|\partial_{x}[(\eta
(s/T)v(s)+\eta
(s/T)G(s))^{k+1}]\right\|_{X^{\sigma,-\frac{1}{2}+(k+3)\delta}}\nonumber\\
&&\leq \vartheta+CT^{(k+2)\delta}\left(\|v\|_{X^{\sigma,b}}^{k+1}+\|G\|_{Y(I)}^{k+1}\right)\nonumber\\
&&\leq  \vartheta+C_{1}T^{(k+2)\delta}\left(\vartheta^{k+1}+\vartheta^{k+1}\right),\label{4.05}
\end{eqnarray}
where $C_{1}=2^{k+1}C$. On the other hand, by using \eqref{2.040} and Proposition 3.1,  we have
\begin{align}
&\quad\|\Phi(v_{1})-\Phi(v_{2})\|_{X^{\sigma,\frac{1}{2}+\delta}}\nonumber\\
&\leq CT^{(k+2)\delta}\Big\|\partial_{x}[(\eta
(s/T)v_{1}(s)+\eta
(s/T)G(s))^{k+1}\nonumber\\
&\qquad\qquad\qquad-(\eta
(s/T)v_{2}(s)+\eta
(s/T)G(s))^{k+1}]\Big\|_{X^{\sigma,-\frac{1}{2}+(k+3)\delta}}\nonumber\\
&\leq CT^{(k+2)\delta}\left(\|v_{1}\|_{X^{\sigma,\frac{1}{2}+\delta}}^{k}
+\|v_{2}\|_{X^{\sigma,\frac{1}{2}+\delta}}^{k}+\|G\|_{Y(I)}^{k}\right)\|v_{1}-v_{2}\|_{X^{\sigma,\frac{1}{2}+\delta}}\nonumber\\
&\leq C_{1}T^{(k+2)\delta}(\vartheta^{k}+\vartheta^{k})\|v_{1}-v_{2}\|_{X^{\sigma,\frac{1}{2}+\delta}},\label{4.06}
\end{align}
where $C_{1}=2^{k+1}C$. We take a sufficiently small $\vartheta=\min\left\{(4C_{1})^{-\frac{1}{k}}, (4C_{1}T^{(k+2)\delta})^{-\frac{1}{k}}\right\}$, such that
\begin{eqnarray}
&&C_{1}T^{(k+2)\delta}\left(\vartheta^{k+1}+\vartheta^{k+1}\right)\leq \vartheta,\quad C_{1}T^{(k+2)\delta}(\vartheta^{k}+\vartheta^{k})\leq \frac{1}{2}.\label{4.07}
\end{eqnarray}

From \eqref{4.04}-\eqref{4.07}, we have that $\Phi(v)$ is a contraction mapping in $B$.

The proof of Theorem 4.1 is completed.

\bigskip

\noindent {\bf Proof of Theorem  1.1}

\bigskip

It follows from Lemma 3.1 and \cite[Lemma 3.3]{DLM2019} that for almost every $\omega\in\Omega$,  there exists a time interval $I_{\omega}$ containing $0$, such that
\begin{eqnarray}
&&\|U(t)f^{\omega}\|_{Y(I_{\omega})}\leq \vartheta.\label{4.08}
\end{eqnarray}
We denote $z(t,x)=U(t)f^{\omega}(x)$ and $v(t,x)=u(t,x)-z(t,x)$, where $u(t,x)$ is a solution of \eqref{1.01} with random data $u(0,x)=f^{\omega}(x)$. Then, by using \eqref{1.01}, we have
\begin{eqnarray}
&&v_{t}+v_{xxx}+\frac{1}{k+1}\partial_{x}[(v+z)^{k+1}]=0,\label{4.09}\\
&&v(0,x)=v_{0}(x)=0.\label{4.010}
\end{eqnarray}
From \eqref{4.08}-\eqref{4.010}, we have
\begin{eqnarray}
&&\|\eta
(t/T)U(t)v_{0}\|_{X^{\sigma,\frac{1}{2}+\delta}}+\|z\|_{Y(I_{\omega})}=\|z\|_{Y(I_{\omega})}\leq \vartheta.\label{4.011}
\end{eqnarray}
By using \eqref{4.011} and Theorem 4.1, we have that Theorem 1.1 is valid.

The proof of Theorem 1.1 is completed.

\bigskip

\bigskip

\section{Proof of Theorem  1.2}

\setcounter{equation}{0}

 \setcounter{Theorem}{0}

\setcounter{Lemma}{0}

\setcounter{Proposition}{0}

\setcounter{section}{5}

By using \eqref{4.05}-\eqref{4.07}, we choose $\sigma=\frac{1}{2}+\epsilon$. Then, for  almost every $\omega\in\Omega$, we have
\begin{eqnarray}
&&\|u(t,x)-U(t)f^{\omega}(x)\|_{X^{\frac{1}{2}+\epsilon,\frac{1}{2}+\delta}}
\leq C.\label{5.01}
\end{eqnarray}
We define
\begin{eqnarray*}
&&\tilde{\Omega}=\left\{\omega:\lim_{t\rightarrow0}\|u(t,x)-U(t)f^{\omega}(x)\|_{L_{x}^{\infty}}=0\right\}.
\end{eqnarray*}
By using  $X^{\frac{1}{2}+\epsilon,\frac{1}{2}+\delta}(I_{\omega}\times\R)\hookrightarrow C(I_{\omega}; H^{\frac{1}{2}+\epsilon}(\R))$ and the Sobolev embedding theorem $H^{\frac{1}{2}+\epsilon}(\R)\hookrightarrow L^{\infty}(\R)$, we obtain
\begin{align}
\|u(t,x)-U(t)f^{\omega}(x)\|_{L^{\infty}}&\leq C\|u(t,x)-U(t)f^{\omega}(x)\|_{H^{\frac{1}{2}+\epsilon}}\nonumber\\
&\leq C\|u(t,x)-U(t)f^{\omega}(x)\|_{X^{\frac{1}{2}+\epsilon,\frac{1}{2}+\delta}}\leq C.\label{5.02}
\end{align}
From \eqref{5.02}, for almost every $\omega\in\Omega$, we have
\begin{eqnarray}
&&\lim_{t\rightarrow0}\|u(t,x)-U(t)f^{\omega}(x)\|_{L_{x}^{\infty}}=0.\label{5.03}
\end{eqnarray}
By using \eqref{5.03}, we know that $\omega\in\tilde{\Omega}$ and
\begin{eqnarray}
&&1\leq \mathbb{P}\left(\tilde{\Omega}\right)\leq 1.\label{5.04}
\end{eqnarray}
It follows from \eqref{5.04} that
\begin{eqnarray}
&&\mathbb{P}\left(\left\{\omega:\lim_{t\rightarrow0}\left\|u(t,x)-U(t)f^{\omega}(x)\right\|_{L_{x}^{\infty}}=0\right\}\right)=1.\label{5.05}
\end{eqnarray}

The proof of Theorem 1.2 is completed.

\bigskip

\bigskip

\section{Proof of Theorem  1.3}

\setcounter{equation}{0}

 \setcounter{Theorem}{0}

\setcounter{Lemma}{0}

\setcounter{Proposition}{0}

\setcounter{section}{6}
Noting that
\begin{eqnarray*}
&&\|u\|_{X_{T}^{s}}=\left\|\|\Delta_{j}u\|_{Y_{T}^{s}}\right\|_{\ell^{2}},\\ &&\|u\|_{Y_{T}^{s}}=\left\|u\right\|_{L_{T}^{\infty}H^{s}}+\left\|\langle D\rangle^{s+1-2\epsilon}u\right\|_{L_{x}^{\infty}L_{T}^{\frac{2}{1-\epsilon_{1}}}}
+\left\|\langle D\rangle^{s-\frac{k-2}{2k}-\epsilon_{2}}u\right\|_{L_{x}^{k}L_{T}^{\frac{k}{\epsilon_{1}}}}.
\end{eqnarray*}
By using \eqref{2.013}, \eqref{2.015} and \eqref{2.016}, we have
\begin{align}
&\quad\left\|\int_{0}^{t}U(t-\tau)\partial_{x}\left(\prod_{j=1}^{k+1}u_{j}\right)d\tau\right\|_{X_{T}^{\mu+s}}\nonumber\\
&\leq
C(T)\left(\sum_{j\geq0}2^{2(\mu+s+2\epsilon)j}\left\|\Delta_{j}\left(\prod_{j=1}^{k+1}u_{j}\right)\right\|
_{L_{x}^{1}L_{T}^{\frac{2}{1+\epsilon_{1}}}}^{2}\right)^{\frac{1}{2}},\label{6.01}
\end{align}
where $C(T)=C\max\left\{T^{\frac{4\epsilon^{2}}{3+2\epsilon}},\, T^{\frac{8\epsilon^{2}}{3+2\epsilon}},\, T^{\frac{4\epsilon^{2}}{3+2\epsilon}+\frac{\epsilon_1}{k}} \right\}$.
We decompose $\prod\limits_{j=1}^{k+1}u_{j}$ as follows
\begin{eqnarray*}
&&\prod_{j=1}^{k+1}u_{j}=\sum_{j_{1}\geq0}\Delta_{j_{1}}u_{1}\sum_{j_{2}\geq0}\Delta_{j_{2}}u_{2}\ldots\sum_{j_{k+1}\geq0}\Delta_{j_{k+1}}u_{k+1}
=\sum_{m=1}^{(k+1)!}H_{m},
\end{eqnarray*}
where
\begin{eqnarray*}
&&H_{1}=\sum_{j_{1}\geq j_{2}\geq\ldots\geq j_{k+1}}\Delta_{j_{1}}u_{1}\Delta_{j_{2}}u_{2}\ldots \Delta_{j_{k+1}}u_{k+1}.
\end{eqnarray*}
$H_{j}(2\leq j\leq (k+1)!)$ have similar forms and can be estimated in a process similar to that of $H_{1}$, so we treat only $H_{1}$.
Observing that
\begin{align*}
H_{1}&=\sum_{j_{1}\geq j_{2}\geq\ldots\geq j_{k+1}}\Delta_{j_{1}}u_{1}\Delta_{j_{2}}u_{2}\ldots \Delta_{j_{k+1}}u_{k+1}\nonumber\\
&=\Delta_{0}u_{1}\Delta_{0}u_{2}\ldots \Delta_{0}u_{k+1}+\sum_{j_{1}\geq1}\Delta_{j_{1}}u_{1}\Delta_{0}u_{2}\ldots \Delta_{0}u_{k+1}\nonumber\\
&\quad+\sum_{j_{1}\geq j_{2}\geq1}\Delta_{j_{1}}u_{1}\Delta_{j_{2}}u_{2}\ldots \Delta_{0}u_{k+1}+\ldots\nonumber\\
&\quad+\sum_{j_{1}\geq j_{2}\geq\ldots\geq j_{k+1}\geq1}\Delta_{j_{1}}u_{1}\Delta_{j_{2}}u_{2}\ldots \Delta_{j_{k+1}}u_{k+1}\nonumber\\
&=\sum_{n=1}^{k+1}H_{1n}.
\end{align*}
Since other cases can be obtained by similar proofs, we only consider $H_{11},\, H_{12},\, H_{13},\, H_{1(k+1)}$.

\noindent For $H_{11}$, by using H\"{o}lder inequality, Lemma 2.9 and $\mu+s\geq0$, we have
\begin{align}
&\quad\left(\sum_{j\leq k+1}2^{2(\mu+s+2\epsilon)j}\|\Delta_{j}H_{11}\|_{L_{x}^{1}L_{T}^{\frac{2}{1+\epsilon_{1}}}}^{2}\right)^{\frac{1}{2}}\nonumber\\
&\leq
C\left(\sum_{j\leq k+1}2^{2(\mu+s+2\epsilon)j}\|\Delta_{0}u_{1}\Delta_{0}u_{2}\ldots \Delta_{0}u_{k+1}\|_{L_{x}^{1}L_{T}^{\frac{2}{1+\epsilon_{1}}}}^{2}\right)^{\frac{1}{2}}\nonumber\\
&\leq C\left(\sum_{j\leq k+1}2^{2(\mu+s+2\epsilon)j}\|\Delta_{0}u_{1}\|_{L_{x}^{\infty}L_{T}^{\frac{2}{1-\epsilon_{1}}}}^{2}
\prod_{n=2}^{k+1}\|\Delta_{0}u_{n}\|_{L_{x}^{k}L_{T}^{\frac{k}{\epsilon_{1}}}}^{2}\right)^{\frac{1}{2}}\nonumber\\
&\leq C\left(\sum_{j\leq k+1}2^{2(\mu+s+2\epsilon)j}\right)^{\frac{1}{2}} \|\Delta_{0}u_{1}\|_{L_{x}^{\infty}L_{T}^{\frac{2}{1-\epsilon_{1}}}}\prod_{n=2}^{k+1}\|\Delta_{0}u_{n}\|_{L_{x}^{k}L_{T}^{\frac{k}{\epsilon_{1}}}}\nonumber\\
&\leq C\prod_{j=1}^{k+1}\|u_{j}\|_{X_{T}^{s}}.\label{6.02}
\end{align}
For $H_{12}$, by using H\"{o}lder inequality, Lemma 2.9 and $\mu\leq 1-6\epsilon$, we have
\begin{align}
&\quad\left(\sum_{j\geq 1}2^{2(\mu+s+2\epsilon)j}\|\Delta_{j}H_{12}\|_{L_{x}^{1}L_{T}^{\frac{2}{1+\epsilon_{1}}}}^{2}\right)^{\frac{1}{2}}\nonumber\\
&\leq
C\left(\sum_{j\geq 1}2^{-\epsilon j}\left[\sum_{j_{1}\geq C+j}2^{(\mu+s+3\epsilon)j_{1}}\|\Delta_{j_{1}}u_{1}\Delta_{0}u_{2}\ldots \Delta_{0}u_{k+1}\|_{L_{x}^{1}L_{T}^{\frac{2}{1+\epsilon_{1}}}}\right]^{2}\right)^{\frac{1}{2}}\nonumber\\
&\leq C\sum_{j_{1}\geq 1}2^{(1+s-3\epsilon)j_{1}}\|\Delta_{j_{1}}u_{1}\|_{L_{x}^{\infty}L_{T}^{\frac{2}{1-\epsilon_{1}}}}
\prod_{n=2}^{k+1}\|\Delta_{0}u_{n}\|_{L_{x}^{k}L_{T}^{\frac{k}{\epsilon_{1}}}}\nonumber\\
&\leq C\left(\sum_{j_{1}\geq 1}2^{2(1+s-2\epsilon)j_{1}}\|\Delta_{j_{1}}u_{1}\|_{L_{x}^{\infty}L_{T}^{\frac{2}{1-\epsilon_{1}}}}^{2}\right)^{\frac{1}{2}}
\prod_{n=2}^{k+1}\|\Delta_{0}u_{n}\|_{L_{x}^{k}L_{T}^{\frac{k}{\epsilon_{1}}}}\nonumber\\
&\leq C\prod_{j=1}^{k+1}\|u_{j}\|_{X_{T}^{s}}.\label{6.03}
\end{align}
For $H_{13}$, by using H\"{o}lder inequality, Lemma 2.9 and $s\geq\frac{\mu-2}{k}+\frac{1}{2}+\frac{6\epsilon}{k}+\epsilon_{2}$,\, $0\leq \mu\leq \min\{1-6\epsilon,\,1-6\epsilon+k(s-(\frac{1}{2}-\frac{1}{k})-\epsilon_{2})\}$, we have
\begin{align}
&\quad\left(\sum_{j\geq 1}2^{2(\mu+s+2\epsilon)j}\|\Delta_{j}H_{12}\|_{L_{x}^{1}L_{T}^{\frac{2}{1+\epsilon_{1}}}}^{2}\right)^{\frac{1}{2}}\nonumber\\
&\leq
C\left(\sum_{j\geq 1}2^{-\epsilon j}\left[\sum_{j_{1}\geq j_{2}\geq1}2^{(\mu+s+3\epsilon)j_{1}}\|\Delta_{j_{1}}u_{1}\Delta_{j_{2}}u_{2}\ldots \Delta_{0}u_{k+1}\|_{L_{x}^{1}L_{T}^{\frac{2}{1+\epsilon_{1}}}}\right]^{2}\right)^{\frac{1}{2}}\nonumber\\
&\leq C\left(\sum_{j\geq 1}2^{-\epsilon j}\left[\sum_{j_{1}\geq j_{2}\geq 1}2^{(s+1-3\epsilon)j_{1}}2^{(s-\frac{k-2}{2k}-\epsilon_{2})j_{2}}\|\Delta_{j_{1}}u_{1}\|_{L_{x}^{\infty}L_{T}^{\frac{2}{1-\epsilon_{1}}}}
\|\Delta_{j_{2}}u_{2}\|_{L_{x}^{k}L_{T}^{\frac{k}{\epsilon_{1}}}}\right]^{2}\right)^{\frac{1}{2}}\nonumber\\
&\quad\times\prod_{n=3}^{k+1}\|\Delta_{0}u_{n}\|_{L_{x}^{k}L_{T}^{\frac{k}{\epsilon_{1}}}}\nonumber\\
&\leq C\left(\sum_{j_{1}\geq 1}2^{2(1+s-2\epsilon)j_{1}}\|\Delta_{j_{1}}u_{1}\|_{L_{x}^{\infty}L_{T}^{\frac{2}{1-\epsilon_{1}}}}^{2}\right)^{\frac{1}{2}}
\left(\sum_{j_{2}\geq 1}2^{2(s-\frac{k-2}{2k}-\epsilon_{2})j_{2}}\|\Delta_{j_{2}}u_{2}\|_{L_{x}^{k}L_{T}^{\frac{k}{\epsilon_{1}}}}^{2}\right)^{\frac{1}{2}}\nonumber\\
&\quad\times\prod_{n=3}^{k+1}\|\Delta_{0}u_{n}\|_{L_{x}^{k}L_{T}^{\frac{k}{\epsilon_{1}}}}
\leq C\prod_{j=1}^{k+1}\|u_{j}\|_{X_{T}^{s}}.\label{6.04}
\end{align}
For $H_{1(k+1)}$, by using H\"{o}lder inequality, Lemma 2.9 and $s\geq\frac{\mu-2}{k}+\frac{1}{2}+\frac{6\epsilon}{k}+\epsilon_{2}$,\, $0\leq \mu\leq \min\{1-6\epsilon,\,1-6\epsilon+k(s-(\frac{1}{2}-\frac{1}{k})-\epsilon_{2})\}$, we have
\begin{align}
&\quad\left(\sum_{j\geq 1}2^{2(\mu+s+2\epsilon)j}\|\Delta_{j}H_{1(k+1)}\|_{L_{x}^{1}L_{T}^{\frac{2}{1+\epsilon_{1}}}}^{2}\right)^{\frac{1}{2}}\nonumber\\
&\leq
C\left(\sum_{j\geq 1}2^{-\epsilon j}\left[\sum_{j_{1}\geq j_{2}\geq\ldots\geq j_{k+1}\geq1}2^{(\mu+s+3\epsilon)j_{1}}\|\Delta_{j_{1}}u_{1}\Delta_{j_{2}}u_{2}\ldots \Delta_{j_{k+1}}u_{k+1}\|_{L_{x}^{1}L_{T}^{\frac{2}{1+\epsilon_{1}}}}\right]^{2}\right)^{\frac{1}{2}}\nonumber\\
&\leq C\sum_{j_{1}\geq j_{2}\geq\ldots\geq j_{k+1}\geq1}2^{(s+1-3\epsilon)j_{1}}2^{\sum_{n=2}^{k+1}(s-\frac{k-2}{2k}-\epsilon_{2})j_{n}}\|\Delta_{j_{1}}u_{1}\|_{L_{x}^{\infty}L_{T}^{\frac{2}{1-\epsilon_{1}}}}
\prod_{n=2}^{k+1}\|\Delta_{j_{n}}u_{n}\|_{L_{x}^{k}L_{T}^{\frac{k}{\epsilon_{1}}}}\nonumber\\
&\leq C\left(\sum_{j_{1}\geq 1}2^{2(1+s-2\epsilon)j_{1}}\|\Delta_{j_{1}}u_{1}\|_{L_{x}^{\infty}L_{T}^{\frac{2}{1-\epsilon_{1}}}}^{2}\right)^{\frac{1}{2}}
\prod_{n=2}^{k+1}\left(\sum_{j_{n}\geq 1}2^{2(s-\frac{k-2}{2k}-\epsilon_{2})j_{n}}\|\Delta_{j_{n}}u_{n}\|_{L_{x}^{k}L_{T}^{\frac{k}{\epsilon_{1}}}}^{2}\right)^{\frac{1}{2}}
\nonumber\\
&\leq C\prod_{j=1}^{k+1}\|u_{j}\|_{X_{T}^{s}}.\label{6.05}
\end{align}
From \eqref{6.01}-\eqref{6.05}, we have
\begin{align}
&\quad\left\|\int_{0}^{t}U(t-\tau)\partial_{x}\left(\prod_{j=1}^{k+1}u_{j}\right)d\tau\right\|_{X_{T}^{\mu+s}}\nonumber\\
&\leq
C(T)\left(\sum_{j\geq0}2^{2(\mu+s+2\epsilon)j}\left\|\Delta_{j}\left(\prod_{j=1}^{k+1}u_{j}\right)\right\|
_{L_{x}^{1}L_{T}^{\frac{2}{1+\epsilon_{1}}}}^{2}\right)^{\frac{1}{2}}\nonumber\\
&\leq C(T)\prod_{j=1}^{k+1}\|u_{j}\|_{X_{T}^{s}},\label{6.06}
\end{align}
where $C(T)=C\max\left\{T^{\frac{4\epsilon^{2}}{3+2\epsilon}},\, T^{\frac{8\epsilon^{2}}{3+2\epsilon}},\, T^{\frac{4\epsilon^{2}}{3+2\epsilon}+\frac{\epsilon_1}{k}} \right\}$.

The proof of Theorem 1.3 is completed.

\bigskip

\bigskip

\section{Proof of Theorem  1.4}

\setcounter{equation}{0}

 \setcounter{Theorem}{0}

\setcounter{Lemma}{0}

\setcounter{Proposition}{0}

\setcounter{section}{7}
Before proving Theorem 1.4, we first state a local well-posedness result, which is crucial for the  proof of Theorem 1.4.

\begin{Theorem}\label{Theorem 7.1}(Local well-posedness with rough data.)
Suppose that $k\geq4$, $\epsilon, \epsilon_{2}>0$, $f(x)\in H^{s}(\R)$ and $s\geq\frac{1}{2}-\frac{2}{k}+\frac{6\epsilon}{k}+\epsilon_{2}$.
Then, there exists $T>0$ such that  \eqref{1.01} admits a unique solution $u(t,x)\in C([0,T];H^{s}(\R))$ with rough data $f(x)$.
\end{Theorem}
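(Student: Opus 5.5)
Our plan is to run a contraction argument for the Duhamel map in the space $X_{T}^{s}$, using Theorem 1.3 with $\mu=0$ as the multilinear estimate and Lemmas 2.1 and 2.2 for the linear and inhomogeneous parts. Define
\begin{eqnarray*}
\Phi(u)=U(t)f-\frac{1}{k+1}\int_{0}^{t}U(t-\tau)\partial_{x}(u^{k+1})d\tau
\end{eqnarray*}
on the ball $B=\{u:\|u\|_{X_{T}^{s}}\leq 2C_{0}\|f\|_{H^{s}}\}$.

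\textbf{Step 1 (linear estimate).} We first show $\|U(t)f\|_{X_{T}^{s}}\leq C_{0}\|f\|_{H^{s}}$. We apply the three pieces of $Y_{T}^{s}$ to each $\Delta_{j}f$ and square-sum in $j$.
\begin{itemize}
\item The $L_{T}^{\infty}H^{s}$ part follows from unitarity of $U(t)$.
\item The piece $\|\langle D\rangle^{s+1-2\epsilon}U(t)\Delta_{j}f\|_{L_{x}^{\infty}L_{T}^{2/(1-\epsilon_{1})}}$ is bounded by \eqref{2.019} applied to $\langle D\rangle^{s}\Delta_{j}f$. This gives $CT^{\frac{4\epsilon^{2}}{3+2\epsilon}}\|\langle D\rangle^{s}\Delta_{j}f\|_{L^{2}}$.
\item The $L_{x}^{k}L_{T}^{k/\epsilon_{1}}$ piece follows from \eqref{2.07}.
\end{itemize}
Since $\Delta_{j}$ commutes with $U(t)$, square-summing over $j$ gives the claim, with $C_{0}$ independent of $T\leq1$.

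\textbf{Step 2 (nonlinear estimate).} We apply Theorem 1.3 with $\mu=0$ and $u_{1}=\cdots=u_{k+1}=u$. The hypothesis of Theorem 1.3 with $\mu=0$ reads $s\geq -\frac{2}{k}+\frac{1}{2}+\frac{6\epsilon}{k}+\epsilon_{2}$, which is exactly our assumption. The constraint $0\leq\mu\leq\min\{1-6\epsilon,\,1-6\epsilon+k(s-\frac{1}{2}+\frac{1}{k}-\epsilon_{2})\}$ also has to hold at $\mu=0$, so we check both entries of the minimum:
\begin{itemize}
\item $1-6\epsilon\geq0$ holds for small $\epsilon$.
\item The second entry is $\geq 1-6\epsilon+k(-\frac{2}{k}+\frac{6\epsilon}{k}+\frac{1}{k})=0$ under our hypothesis.
\end{itemize}
Hence
\begin{eqnarray*}
\left\|\int_{0}^{t}U(t-\tau)\partial_{x}(u^{k+1})d\tau\right\|_{X_{T}^{s}}\leq C(T)\|u\|_{X_{T}^{s}}^{k+1}.
\end{eqnarray*}
For differences we write $u^{k+1}-v^{k+1}=(u-v)\sum_{l=0}^{k}u^{l}v^{k-l}$. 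Since Theorem 1.3 is multilinear in distinct $u_{j}$, this gives
\begin{eqnarray*}
\|\Phi(u)-\Phi(v)\|_{X_{T}^{s}}\leq C(T)(\|u\|_{X_{T}^{s}}^{k}+\|v\|_{X_{T}^{s}}^{k})\|u-v\|_{X_{T}^{s}}.
\end{eqnarray*}

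\textbf{Step 3 (smallness of $T$).} Because $C(T)$ carries a positive power of $T$ (all exponents $\frac{4\epsilon^{2}}{3+2\epsilon}$, etc., are positive), we choose $T$ so that $C(T)(2C_{0}\|f\|_{H^{s}})^{k}\leq\frac14$. Then $\Phi$ maps $B$ into itself and is a contraction. This gives a unique fixed point in $B$, i.e.\ uniqueness holds in $B$ (within $X_{T}^{s}$), which is the sense of uniqueness this argument provides.

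\textbf{Step 4 (continuity in time).} We show $u\in C([0,T];H^{s})$. $U(t)f$ is continuous in $H^{s}$. For the Duhamel term, we use \eqref{2.013}–\eqref{2.014} applied to $\int_{t_{1}}^{t_{2}}$ together with the square-summed frequency-localized bound from Step 2. By dominated convergence in $j$, this gives continuity in time.

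\textbf{Main obstacle.} The delicate point is Step 2. One must verify that the parameter constraints of Theorem 1.3 are compatible at $\mu=0$ with the stated lower bound on $s$ and $k\geq4$. One must also check that the low-frequency pieces (the $\Delta_{0}$ terms, such as $H_{11}$ in the proof of Theorem 1.3) are controlled by $X_{T}^{s}$ without requiring $s\geq0$. If the lower-order $\langle D\rangle$ weights cause trouble for negative $s$, our first fix is to note that $\Delta_{0}$ is frequency-localized to $|\xi|\leq1$, so all $\langle D\rangle$ weights there are comparable to constants.
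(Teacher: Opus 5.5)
Your proposal is correct and follows essentially the same route as the paper. Both run a contraction for the Duhamel map on a ball in $X_{T}^{s}$, bound the linear part via \eqref{2.019} and \eqref{2.07}, bound the nonlinear part by Theorem 1.3 with $\mu=0$, and close by taking $T$ small. Your explicit check of the $\mu$-constraints at $\mu=0$ and your sketch of continuity in time in $H^{s}$ are details the paper leaves out; your low-frequency concern is moot, since $k\geq4$ already forces $s>0$.
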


\noindent{\bf Proof.}
Theorem 7.1 has already been proved in \cite{KPV1993}. Here, we prove it by applying Theorem 1.3.
By Duhamel's principle and \eqref{1.01} with the rough data $f(x)$, we know that
\begin{eqnarray}
&&u(t)=U(t)f+\frac{1}{k+1}\int_{0}^{t}U(t-s)\partial_{x}(u^{k+1})ds.\label{7.01}
\end{eqnarray}
We define $B_{1}=\{u:\|u\|_{X_{T}^{s}}\leq 2R=2C\|f\|_{H^{s}}\}$ and
\begin{eqnarray}
&&\Phi_{1}(u)=U(t)f+\frac{1}{k+1}\int_{0}^{t}U(t-s)\partial_{x}(u^{k+1})ds.\label{7.02}
\end{eqnarray}
On the one hand, by the definition of $X_{T}^{s}$ and \eqref{2.07} and \eqref{2.019}, we obtain
\begin{eqnarray}
&&\|U(t)f\|_{X_{T}^{s}}\leq C\|f\|_{H^{s}}\leq R.\label{7.03}
\end{eqnarray}
On the other hand, by using Theorem 1.3 with $\mu=0$, we have
\begin{eqnarray}
&&\frac{1}{k+1}\left\|\int_{0}^{t}U(t-s)\partial_{x}(u^{k+1})ds\right\|_{X_{T}^{s}}\leq CT^{\frac{4\epsilon^{2}}{3+2\epsilon}}\|u\|_{X_{T}^{s}}^{k+1}\leq CT^{\frac{4\epsilon^{2}}{3+2\epsilon}}(2R)^{k+1}.\label{7.04}
\end{eqnarray}
Next, we prove that $\Phi_{1}(u)$ is a contraction mapping in $B_{1}$. By using \eqref{7.03} and \eqref{7.04}, we have
\begin{align}
\|\Phi_{1}(u)\|_{X_{T}^{s}}&\leq \|U(t)f\|_{X_{T}^{s}}+\frac{1}{k+1}\left\|\int_{0}^{t}U(t-s)\partial_{x}(u^{k+1})ds\right\|_{X_{T}^{s}}\nonumber\\
&\leq C\|f\|_{H^{s}}+CT^{\frac{4\epsilon^{2}}{3+2\epsilon}}\|u\|_{X_{T}^{s}}^{k+1}\nonumber\\
&\leq (R+CT^{\frac{4\epsilon^{2}}{3+2\epsilon}}(2R)^{k+1}).\label{7.05}
\end{align}
Moreover, for $u,v\in X_{T}^{s}$, we have
\begin{align}
\|\Phi_{1}(u)-\Phi_{1}(v)\|_{X_{T}^{s}}&\leq \frac{1}{k+1}\left\|\int_{0}^{t}U(t-s)\partial_{x}(u^{k+1}-v^{k+1})ds\right\|_{X_{T}^{s}}\nonumber\\
&\leq CT^{\frac{4\epsilon^{2}}{3+2\epsilon}}(\|u\|_{X_{T}^{s}}^{k}+\|v\|_{X_{T}^{s}}^{k})\|u-v\|_{X_{T}^{s}}\nonumber\\
&\leq 2CT^{\frac{4\epsilon^{2}}{3+2\epsilon}}(2R)^{k}\|u-v\|_{X_{T}^{s}}.\label{7.06}
\end{align}
We take a sufficiently small $T>0$, such that
\begin{eqnarray}
&&CT^{\frac{4\epsilon^{2}}{3+2\epsilon}}(2R)^{k}\leq 1,\, 2CT^{\frac{4\epsilon^{2}}{3+2\epsilon}}(2R)^{k}\leq \frac{1}{4}.\label{7.07}
\end{eqnarray}
From \eqref{7.05}-\eqref{7.07}, we have that $\Phi_{1}(u)$ is a contraction mapping in $B_{1}$.

The proof of Theorem 7.1 is completed.

\bigskip
\noindent{\bf Proof of Theorem  1.4.}
\bigskip

Since $f\in\hat{L}^{\infty}(\R)$, we have
\begin{align}
\int_{\SR}|\mathscr{F}_{x}(U(t)f)(\xi)|d\xi&=\int_{\SR}|e^{it\xi^{3}}\mathscr{F}_{x}f(\xi)|d\xi\nonumber\\
&=\int_{\SR}|\mathscr{F}_{x}f(\xi)|d\xi\leq C.\label{7.08}
\end{align}
From \eqref{7.08}, we obtain $U(t)f(x)\in \hat{L}^{\infty}$, which implies that
\begin{eqnarray}
&&\lim_{|x|\rightarrow\infty}U(t)f=0.\label{7.09}
\end{eqnarray}
By using Theorem 7.1 and Theorem 1.3 with $\mu=ks-\frac{k}{2}+2-6\epsilon-k\epsilon_{2}$, when $s\geq\frac{1}{2}-\frac{2}{k+1}+\frac{7\epsilon}{k+1}+\frac{k\epsilon_{2}}{k+1}$, $T>0$ sufficiently small, we have
\begin{eqnarray}
&&\left\|\int_{0}^{t}U(t-s)\partial_{x}[u^{k+1}(s,x)]ds\right\|_{X_{T}^{\frac{1}{2}+\epsilon}}\leq C\|u\|_{X_{T}^{s}}^{k+1}\leq C.\label{7.010}
\end{eqnarray}
From \eqref{7.010}, for $t\in[0,T]$, we have
\begin{align}
\left\|\int_{0}^{t}U(t-s)\partial_{x}[u^{k+1}(s,x)]ds\right\|_{C([0,T]; H^{\frac{1}{2}+\epsilon})}&\leq
\left\|\int_{0}^{t}U(t-s)\partial_{x}[u^{k+1}(s,x)]ds\right\|_{X_{T}^{\frac{1}{2}+\epsilon}}\nonumber\\
&\leq C.\label{7.011}
\end{align}
It follows from \eqref{7.011} that
\begin{eqnarray}
&&\lim_{|x|\rightarrow\infty}(u(t,x)-U(t)f)=\lim_{|x|\rightarrow\infty}\int_{0}^{t}U(t-s)\partial_{x}[u^{k+1}(s,x)]ds=0.\label{7.012}
\end{eqnarray}
By using \eqref{7.09} and \eqref{7.012}, for $t\in[0,T]$,  we have
\begin{eqnarray}
&&\lim_{|x|\rightarrow\infty}u(t,x)=0.\label{7.013}
\end{eqnarray}

The proof of Theorem 1.4 is completed.

\bigskip

\bigskip

\section{Proof of Theorem  1.5}

\setcounter{equation}{0}

 \setcounter{Theorem}{0}

\setcounter{Lemma}{0}

\setcounter{Proposition}{0}

\setcounter{section}{8}

By using \eqref{4.05}-\eqref{4.07}, for  almost every $\omega\in\Omega$ and $t\in I_{\omega}$, we have
\begin{eqnarray}
&&\left\|\int_{0}^{t}U(t-s)\partial_{x}[(v(s,x)+z(s,x))^{k+1}]ds\right\|_{H^{\frac{1}{2}+\epsilon}}\leq C.\label{8.01}
\end{eqnarray}
We define
\begin{eqnarray}
&&\tilde{\Omega}_{1}=\left\{\omega: \forall t\in I_{\omega}, \lim_{|x|\rightarrow\infty}\left(u(t,x)-U(t)f^{\omega}(x)\right)=0\right\}.\label{8.02}
\end{eqnarray}
By using \eqref{8.01}, for  almost every $\omega\in\Omega$, we  have
\begin{eqnarray}
&&\lim_{|x|\rightarrow\infty}\int_{0}^{t}U(t-s)\partial_{x}[(v(s,x)+z(s,x))^{k+1}]ds=0.\label{8.03}
\end{eqnarray}
From \eqref{8.03}, we have
\begin{eqnarray}
&&\lim_{|x|\rightarrow\infty}\left(u(t,x)-U(t)f^{\omega}(x)\right)=0.\label{8.04}
\end{eqnarray}
It follows from \eqref{8.04} that $\omega\in\tilde{\Omega}_{1}$ and
\begin{eqnarray}
&&1\leq \mathbb{P}\left(\tilde{\Omega}_{1}\right)\leq 1.\label{8.05}
\end{eqnarray}
By using \eqref{8.05}, we have
\begin{eqnarray}
&&\mathbb{P}\left(\left\{\omega: \forall t\in I_{\omega}, \lim_{|x|\rightarrow\infty}\left(u(t,x)-U(t)f^{\omega}(x)\right)=0\right\}\right)=1.\label{8.06}
\end{eqnarray}

The proof of Theorem 1.5 is completed.

\bigskip

\bigskip

\bigskip
\bigskip

\leftline{\large \bf Acknowledgments}
The last author is supported by the National Natural Science Foundation of
 China under the grant number  of  12371245.

\end{document}